\newtheorem{theorem}{Theorem}[section]
\newtheorem{definition}[theorem]{Definition}
\newtheorem{corollary}[theorem]{Corollary}
\newtheorem{lemma}[theorem]{Lemma}
\newtheorem*{notation*}{Notation}
{\theoremstyle{definition}
}
\newtheorem{proposition}[theorem]{Proposition}
\numberwithin{equation}{section}
\theoremstyle{definition}
\newtheorem{remark}[theorem]{Remark}}
\begin{document}

\title{\vspace{-1.2cm} \bf Rigidity theorems by capacities and kernels\rm}

\author{Robert Xin Dong, \quad John N. Treuer \quad and\quad Yuan Zhang}
\date{}

\maketitle

\begin{abstract}

For any open hyperbolic Riemann surface $X$, the Bergman kernel $K$, the logarithmic capacity $c_{\beta}$, and the analytic capacity $c_{B}$ satisfy the inequality chain $\pi K \geq c^2_{\beta} \geq c^2_B$; moreover, equality holds at a single point between any two of the three quantities if and only if  $X$ is biholomorphic to a disk possibly less a relatively closed polar set.
We extend the inequality chain by showing that $c_{B}^2 \geq \pi v^{-1}(X)$ on planar domains, where $v(\cdot)$ is the Euclidean volume, and characterize the extremal cases when equality holds  at one point. Similar rigidity theorems concerning the Szeg\"{o} kernel, the higher-order Bergman kernels, and the sublevel sets of the Green's function are also developed. Additionally, we explore rigidity phenomena related to the multi-dimensional Suita conjecture.

\end{abstract}

\renewcommand{\thefootnote}{\fnsymbol{footnote}}
\footnotetext{\hspace*{-7mm} 
\begin{tabular}{@{}r@{}p{16.5cm}@{}}
& Keywords. analytic capacity, Bergman kernel, Green's function, logarithmic capacity, open Riemann surface, Suita conjecture, Szeg\"{o} kernel\\
& Mathematics Subject Classification. Primary 32A25; Secondary 30C40, 31A15, 30C85, 32D15
\end{tabular}}

\section{Introduction}

An open Riemann surface $X$ is said to be (potential-theoretically) hyperbolic if it admits 
a non-constant negative subharmonic function,
and parabolic if it does not. 
Consider the on-diagonal Bergman kernel $K(z)|dz|^2$, the logarithmic capacity $c_{\beta}(z)|dz|$, and the analytic capacity $c_B(z)|dz|$ on $X$, where $z$ is some local coordinate. 
These three quantities are invariant under changes of local coordinates.
 In 1972, Suita determined a simple relationship between $K$ and $c_B$.

\begin{theorem}[Suita's Theorem in \normalfont \cite{S}] \label{Suita Theorem}
Suppose $X$ is an open hyperbolic Riemann surface.  Then for $z\in X$,
$$\pi K(z) \geq c_B^2(z)$$ and equality holds at some $z_0 \in X$ if and only if $X$ is biholomorphically equivalent to the unit disk less a (possibly empty) closed set of inner capacity zero.
\end{theorem}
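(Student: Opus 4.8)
The plan is to interpose the logarithmic capacity $c_\beta$ and prove the sharper chain $\pi K(z)\ge c_\beta^2(z)\ge c_B^2(z)$ together with the rigidity of each link; Suita's statement then follows since equality between the two extreme quantities at $z_0$ forces equality in both intermediate links. Throughout I use the two standard extremal descriptions, read off in a fixed local coordinate $z$ centred at $z_0$: on the one hand $c_B(z_0)=\sup\{|f'(z_0)|: f\colon X\to\mathbb D\ \text{holomorphic},\ f(z_0)=0\}$, attained by the Ahlfors function $F$ (restricting to $f(z_0)=0$ costs nothing, since post-composing with a disk automorphism only increases $|f'(z_0)|$); on the other hand $K(z_0)=\sup\{|h(z_0)|^2: h\ \text{a holomorphic }(1,0)\text{-form},\ \|h\|_{L^2(X)}\le1\}$, attained by the Bergman kernel form. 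Likewise $c_\beta(z_0)=e^{\lambda(z_0)}$, where the negative Green's function expands as $g_{z_0}(z)=\log|z|+\lambda(z_0)+o(1)$ near $z_0$.

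For the inequality $c_\beta(z_0)\ge c_B(z_0)$ the key point is subharmonicity: for any holomorphic $f\colon X\to\mathbb D$ with $f(z_0)=0$, the function $\log|f|$ is subharmonic, negative, and carries a logarithmic pole of weight one at $z_0$, so by the extremal (maximality) characterization of the Green's function among such functions one has $\log|f|\le g_{z_0}$ on $X$; subtracting $\log|z|$ and letting $z\to z_0$ gives $\log|f'(z_0)|\le\lambda(z_0)$, and the supremum over $f$ yields $c_B(z_0)\le c_\beta(z_0)$. The inequality $\pi K(z_0)\ge c_\beta^2(z_0)$ is the Suita conjecture, now a theorem (B\l ocki for planar domains, Guan--Zhou on open Riemann surfaces), which moreover comes with the rigidity that equality at a single point holds exactly when $X$ is biholomorphic to the disk with a closed polar set removed. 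Concatenating the two inequalities proves $\pi K\ge c_B^2$.

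For the rigidity, the ``if'' direction is a removability argument: if $X$ is biholomorphic to $\mathbb D\setminus E$ with $E$ closed of inner capacity zero, then $E$ is negligible for all three quantities --- square-integrable holomorphic forms and bounded holomorphic functions extend across $E$, and the Green's function of $\mathbb D\setminus E$ coincides with that of $\mathbb D$ --- so $K$, $c_\beta$, $c_B$ on $X$ equal their values on $\mathbb D$, where one computes directly $\pi K(z)=c_\beta^2(z)=c_B^2(z)=(1-|z|^2)^{-2}$, and in particular equality holds at every point. For the ``only if'' direction, $\pi K(z_0)=c_B^2(z_0)$ forces equality in $\pi K(z_0)\ge c_\beta^2(z_0)$, and the rigidity half of the Suita conjecture already identifies $X$. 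Alternatively, one may use only the forced equality $c_\beta(z_0)=c_B(z_0)$: then $g_{z_0}-\log|F|$ is a nonnegative superharmonic function on $X\setminus\{z_0\}$ that extends harmonically across $z_0$ with value $0$ (the pole coefficients match), so by the minimum principle $g_{z_0}=\log|F|$ on $X$; this exhibits the Green's function as the log-modulus of a single-valued holomorphic map into $\mathbb D$ with one simple zero and unimodular ideal-boundary values, which pins down $X$ as the disk less a polar set.

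The genuinely hard step is the rigidity of the link $\pi K\ge c_\beta^2$ --- equivalently, the hard case of the Suita conjecture --- which I would cite, as it rests on the optimal $L^2$-extension theorem of Guan--Zhou. If one wants a self-contained proof, the difficulty does not vanish but migrates into the ``only if'' analysis above: obtaining $g_{z_0}=\log|F|$ is easy, but deducing from it that $X$ has a single ideal boundary component of positive capacity (so that the conjugate function $\widetilde g_{z_0}$ has all periods in $2\pi\mathbb Z$, making $e^{g_{z_0}+i\widetilde g_{z_0}}$ single-valued and equal to $F$) and that the omitted set $\mathbb D\setminus F(X)$ has zero capacity requires a careful period-and-degree argument; that is where the real work lies. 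A minor but necessary bookkeeping point is to check that the normalizations of the Ahlfors function, the Robin constant $\lambda$, and the Bergman density are mutually consistent as coefficients of the invariant weights $c_B|dz|$, $c_\beta|dz|$, $K|dz|^2$ --- which they are, all three being coordinate-independent.
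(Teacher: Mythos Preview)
The paper does not prove this statement; Theorem~\ref{Suita Theorem} is cited from Suita's 1972 paper as a known input. Your argument is logically valid, but it inverts the historical and logical order: you deduce Suita's 1972 theorem from the much deeper Suita Conjecture (resolved only in 2013--2015 by B\l{}ocki and Guan--Zhou). The paper explicitly remarks, after Corollary~\ref{theorem 1}, that ``the proof of Suita's Theorem, which was proved using Riemann surface theory, is much simpler than the proof of the Suita Conjecture.'' So while your chain $\pi K\ge c_\beta^2\ge c_B^2$ is correct, invoking the Guan--Zhou rigidity to prove Suita's result is overkill, and in the context of this paper it is circular in spirit: the authors deliberately use Suita's Theorem precisely \emph{because} it avoids the Suita Conjecture.

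Your ``alternative'' rigidity argument via the forced equality $c_\beta(z_0)=c_B(z_0)$ is in fact the right self-contained route, and is essentially what the paper carries out in Appendix~A. Once you have $G_{z_0}=\log|F|$ for the Ahlfors function $F$ (your minimum-principle step, which matches the proof of Theorem~\ref{main2}), the conclusion that $F$ is a biholomorphism onto $\mathbb D$ less a relatively closed polar set follows from the subordination property of the Green's function together with Kellogg's theorem --- this is Lemma~\ref{main}. No ``careful period-and-degree argument'' is needed; you have overestimated the difficulty of this last step, and it is exactly this elementary character that makes Suita's Theorem the easy half of the story.
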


A closed set of inner capacity zero is a relatively closed polar set. In the same paper, Suita conjectured that  $c_\beta$ would satisfy a similar inequality with rigidity as follows.
\medskip

\noindent\textbf{Suita Conjecture\normalfont\ \cite{S}:} \label{Suita conjecture}
{\it Suppose $X$ is an open hyperbolic Riemann surface.  Then for $z\in X$, $$
\pi K(z) \geq c_\beta^2(z)
$$ and equality holds at some $z_0 \in X$ if and only if  $X$ is biholomorphically equivalent to the unit disk less a (possibly empty) closed set of inner capacity zero.} 

\medskip

Towards the resolution of the Suita Conjecture,  Ohsawa  first demonstrated in \cite{O95} that the Suita Conjecture was connected to his Ohsawa-Takegoshi $L^2$ extension theorem, and he proved that $750\pi K \geq c_{\beta}^2$.  By further considering the sharp $L^2$ extension problem, B\l{}ocki \cite{Bl} established the optimal inequality $\pi K \geq c_{\beta}^2 $ for bounded domains in $\mathbb{C}$.  Later, Guan and Zhou \cite{GZ3} proved both the inequality and equality parts of the conjecture for open Riemann surfaces. See also \cite{BL} for a variational approach by Berndtsson and Lempert. 
Suita's theorem and the resolution of the conjecture may be called rigidity theorems as equality at one point between $\pi K$ and either  $c_{\beta}$ or $c_B$ determines the surface 
up to biholomorphism. 

\medskip

It is straightforward to show $c_{\beta}^2 \geq c_B^2$, so by the works of B\l{}ocki, and Guan and Zhou, for any open hyperbolic Riemann surface it holds that
\begin{equation} \label{Inequality of surface quantities}
\pi K \geq c_{\beta}^2 \geq c_B^2.
\end{equation}
The characterization of the equality part for the inequality $c_{\beta} \geq c_B$ can be deduced from a result of Minda \cite{Min} on the behavior of $c_{\beta}(z)|dz|$ under holomorphic mappings.  His result 
used the sharp form of the Lindel\"of principle.  In Appendix \ref{AppendixA}, we provide a more direct proof of the equality characterization without adopting this principle.

\medskip

We first restrict our attention to domains in $\mathbb{C}$.
Throughout this paper, denote a disk by $
\mathbb{D}(z_0, r): = \{z \in \mathbb{C}: |z - z_0| < r\}$ and let $  \mathbb{D} := \mathbb{D}(0, 1).
$ 
For the Euclidean volume $v(\cdot)$, we use the convention that $v^{-1} (\Omega)  = 0$ when $v(\Omega) = \infty$. Following Ahlfors \cite{AB50}, a compact set $E \subset \mathbb{C}_{\infty} = \mathbb{C} \cup \{\infty\}$ is said to be {\it a null set of class $\mathcal{N}_{B}$} if all bounded holomorphic functions on $\mathbb{C}_{\infty} \setminus E$ are constants.  Compact polar sets are always of class $\mathcal{N}_B$.

\begin{theorem}[Rigidity theorem of $c_B$ and $c_{\beta}$]\label{rigidity of analytic and logarithmic capacity with reciprocal of volume}
Let $\Omega \subset \mathbb{C}$ be a domain.  Then
\begin{equation} \label{analytic capacity theorem main eqn}
c_B^2(z) \geq {\pi \over v(\Omega)}, \quad z \in \Omega.
\end{equation}
  Moreover, equality holds at some
$z_0 \in \Omega$ if and only if either of the following holds true:
\begin{enumerate}

\item [$1.$] $v(\Omega) < \infty$ and $\Omega = {\mathbb D}( z_0, \sqrt{\pi^{-1}v(\Omega)}) \setminus P$, where  $P$ satisfies
$$
P \cap \overline{\mathbb{D}(z_0, s)} \in \mathcal{N}_B, \quad \hbox{for all } s < \sqrt{\pi^{-1}v(\Omega)};
$$
if additionally $$c_{\beta}^2(z_0) = {\pi \over v(\Omega)},$$ then $P$ above is a relatively closed polar set;

\item [$2.$] $v(\Omega) = \infty$ and $\Omega =\mathbb{C}\setminus P$ where $P \in \mathcal{N}_B$.

\end{enumerate}

\end{theorem}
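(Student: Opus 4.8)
The plan is to establish the model case (simply connected domains) by a direct Riemann‑mapping/area computation, extend it to arbitrary domains via the Ahlfors function, and then read off the rigidity by tracking exactly where the inequalities used can be equalities.

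\emph{Model case.} Assume first $v(\Omega)<\infty$ and $\Omega$ is simply connected (and $\Omega\neq\mathbb{C}$). Let $\varphi\colon\Omega\to\mathbb{D}$ be the Riemann map with $\varphi(z_0)=0$, $\varphi'(z_0)>0$, so that $c_B(z_0)=c_\beta(z_0)=\varphi'(z_0)$. Writing $\varphi^{-1}(w)=z_0+\sum_{k\ge1}a_kw^k$ with $a_1=1/\varphi'(z_0)$, the area formula gives
\[
v(\Omega)=\int_{\mathbb{D}}\bigl|(\varphi^{-1})'\bigr|^2\,dv=\pi\sum_{k\ge1}k|a_k|^2\ \ge\ \pi|a_1|^2=\frac{\pi}{\varphi'(z_0)^2},
\]
which is \eqref{analytic capacity theorem main eqn} in this case, with equality iff $a_k=0$ for all $k\ge2$, i.e.\ iff $\varphi^{-1}$ is affine, i.e.\ iff $\Omega=\mathbb{D}\bigl(z_0,\sqrt{\pi^{-1}v(\Omega)}\bigr)$.

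\emph{General domains.} For arbitrary $\Omega$ with $v(\Omega)<\infty$, exhaust $\Omega$ by bounded, finitely connected domains $\Omega_j\uparrow\Omega$ with real‑analytic boundaries. Since $c_B$ is continuous from inside ($c_B^{\Omega_j}(z_0)\downarrow c_B(z_0)$) and $v(\Omega_j)\uparrow v(\Omega)$, it is enough to prove $c_B^{\Omega_j}(z_0)^2\,v(\Omega_j)\ge\pi$; so fix such an $\Omega$ and let $f\colon\Omega\to\mathbb{D}$ be the Ahlfors function at $z_0$, a proper $n$‑to‑one branched covering with $f(z_0)=0$ and $f'(z_0)=c:=c_B(z_0)>0$. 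The change of variables $w=f(z)$ gives
\[
v(\Omega)=\int_{\mathbb{D}}\Phi(w)\,dv(w),\qquad \Phi(w):=\sum_{z\in f^{-1}(w)}\frac{1}{|f'(z)|^2},\qquad \Phi(0)\ge\frac{1}{c^2}.
\]
The density $\Phi$ is not subharmonic (it blows up at branch values), so the plan is to dominate $\Phi$ by the square of a genuinely holomorphic function on $\mathbb{D}$: choose an auxiliary holomorphic $\alpha$ on $\Omega$ with the fibrewise normalization $\sum_{z\in f^{-1}(w)}|\alpha(z)|^2\equiv1$ on $\mathbb{D}$ and $\alpha(z_0)=1$ (equivalently, a holomorphic unit section of $f_*\mathcal{O}_\Omega$ with value $1$ at $z_0$). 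Then $\Psi(w):=\sum_{z\in f^{-1}(w)}\alpha(z)/f'(z)$ is holomorphic on $\mathbb{D}$ — the $\pm(w-w_0)^{-1/2}$ principal parts of the two merging branches at a branch value cancel, and $\alpha$, being single‑valued on $\Omega$, introduces none — with $\Psi(0)=1/c$; by Cauchy–Schwarz $|\Psi|^2\le\Phi$ on $\mathbb{D}$, so that, using the sub–mean value inequality for the subharmonic function $|\Psi|^2$ over the whole disk,
\[
v(\Omega)=\int_{\mathbb{D}}\Phi\ \ge\ \int_{\mathbb{D}}|\Psi|^2\ \ge\ \pi\,|\Psi(0)|^2\ =\ \frac{\pi}{c^2}.
\]
(When $v(\Omega)=\infty$ the inequality \eqref{analytic capacity theorem main eqn} is vacuous.)

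\emph{Rigidity.} Suppose $c_B^2(z_0)=\pi/v(\Omega)$. Then every inequality above must be an equality: $|\Psi|^2$ must be harmonic on $\mathbb{D}$, hence $\Psi\equiv\Psi(0)$; and Cauchy–Schwarz must be tight on a.e.\ fibre, which, since $\alpha$ is holomorphic in $z$ while $1/\overline{f'}$ is antiholomorphic, forces $n=1$, i.e.\ $f$ injective. Feeding this back, the exhausting $\Omega_j$ are essentially disks and, letting $j\to\infty$, $\Omega$ coincides with $\mathbb{D}(z_0,R)$, $R=\sqrt{\pi^{-1}v(\Omega)}$, off a set $P$ of zero area; since bounded holomorphic functions on $\Omega$ must extend at every scale $s<R$ (because equality is preserved under the relevant restriction), one gets $P\cap\overline{\mathbb{D}(z_0,s)}\in\mathcal{N}_B$ for all $s<R$. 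Conversely such an $\Omega$ has $c_B(z_0)=1/R$ and $v(\Omega)=\pi R^2$, giving equality; this is case $1$. If moreover $c_\beta^2(z_0)=\pi/v(\Omega)$, then equality holds at $z_0$ in $c_\beta\ge c_B$, and by Minda's theorem (or the direct argument of Appendix~\ref{AppendixA}) $\Omega$ is a disk minus a relatively closed polar set, upgrading $P$ accordingly. Finally, if $v(\Omega)=\infty$, equality means $c_B(z_0)=0$, i.e.\ all bounded holomorphic functions on $\Omega$ are constant, i.e.\ $\mathbb{C}_\infty\setminus\Omega\in\mathcal{N}_B$, hence $\Omega=\mathbb{C}\setminus P$ with $P\in\mathcal{N}_B$ of zero area; this is case $2$.

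I expect the main obstacle to be the construction of the auxiliary function $\alpha$ with exactly the fibrewise normalization needed — equivalently, producing a holomorphic unit section of $f_*\mathcal{O}_\Omega$ taking a prescribed value at $z_0$ and behaving correctly across the branch values of $f$. This is also precisely the place where the multiply connected case ($n\ge2$) must be shown to be strictly sub‑extremal, so getting it right is what makes the rigidity statement sharp.
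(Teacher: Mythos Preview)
Your central construction cannot work. For a finitely connected $\Omega$ with Ahlfors map $f\colon\Omega\to\mathbb D$ of degree $n$, the fibre sum $w\mapsto\sum_{z\in f^{-1}(w)}|\alpha(z)|^{2}$ is, away from branch values, a sum $\sum_j|h_j(w)|^{2}$ with each $h_j=\alpha\circ(\text{local branch of }f^{-1})$ holomorphic; hence $\Delta\sum_j|h_j|^{2}=4\sum_j|h_j'|^{2}\ge0$. If this sum is identically $1$ it is harmonic, so every $h_j'\equiv0$, every $h_j$ is locally constant, and therefore $\alpha$ is constant on $\Omega$. The normalization $\alpha(z_0)=1$ together with $\sum_{z\in f^{-1}(0)}|\alpha(z)|^{2}=1$ then forces $n=1$. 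In other words, a holomorphic $\alpha$ with your fibrewise unit normalization exists only in the simply connected case, which you have already handled by the area formula; the ``general domain'' step collapses back to the model case and proves nothing new. (Weakening to $\sum|\alpha|^{2}\le1$ does not help: the sum is subharmonic, equals $1$ at $w=0$, hence is $\equiv1$ by the maximum principle, and you are back to the same obstruction.) This is precisely the obstacle you flagged, but it is fatal rather than merely technical.

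The rigidity argument also has a gap. Equality $c_B^{2}(z_0)\,v(\Omega)=\pi$ holds for $\Omega$, not for the exhausting $\Omega_j$; you only know the defect $c_B^{\Omega_j}(z_0)^{2}v(\Omega_j)-\pi\to0$, and turning that into ``$\Omega_j$ are essentially disks centred at $z_0$'' requires a stability statement you have not supplied. Nor is it clear why ``equality is preserved under the relevant restriction'' yields $P\cap\overline{\mathbb D(z_0,s)}\in\mathcal N_B$.

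The paper avoids both issues by inverting through $z_0$ and working with the compact set $E=\mathbb C_\infty\setminus j_{z_0}(\Omega)$. The inequality becomes the Ahlfors--Beurling estimate $\gamma(E)^{2}\ge v(E)/\pi$ together with an elementary volume comparison $v(E)/\pi\ge\pi/v(\Omega)$ (change of variables on $\mathbb D\setminus\Omega$ and $\Omega\setminus\mathbb D$). The rigidity then comes from tracking equality in Ahlfors--Beurling via the explicit extremal $f(z)=(\pi v(E))^{-1/2}\int_E(w-(z-z_0)^{-1})^{-1}\,dv(w)$, which forces $E$ to be a closed disk up to a set of measure zero; a result of Havinson on the image of the Ahlfors extremal then gives the $\mathcal N_B$ structure of $P$.
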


 It is worth pointing out that,   as demonstrated in Remark \ref{EX}, the equality condition in \eqref{analytic capacity theorem main eqn} does not necessarily imply that $P$ is polar.  See also Theorem \ref{jth} and Lemma \ref{Volume Lemma} for rigidity theorems with weaker assumptions than \eqref{analytic capacity theorem main eqn}.

\medskip

Theorem \ref{rigidity of analytic and logarithmic capacity with reciprocal of volume} extends  the chain of inequalities \eqref{Inequality of surface quantities} to
\begin{equation} \label{Domain Inequality Chain}
  \pi K \geq c_{\beta}^2 \geq  c_B^2 \geq  \frac{\pi}{v(\Omega)},
\end{equation}
and essentially gives the equality conditions between $\pi v^{-1}(\Omega)$ and the other quantities in \eqref{Domain Inequality Chain}  for a hyperbolic domain $\Omega\subset \mathbb C$.
As a corollary (see Corollary \ref{theorem 1}), we can reprove the main theorem of \cite{DT}, a rigidity theorem which established the equality conditions of $K(z) \geq v^{-1}(\Omega)$. Moreover, let $K^{(j)}, j \in \mathbb{N}$, denote the higher-order Bergman kernels of a domain $\Omega \subset \mathbb{C}$. Combining  Theorem \ref{rigidity of analytic and logarithmic capacity with reciprocal of volume} with a result of B\l{}ocki and Zwonek in  \cite{BlZw}, we get the inequalities  
$$
K^{(j)}(z) \geq \frac{j!(j+1)!  \pi^j}{ v^{j + 1}(\Omega)},  \quad  z \in \Omega, \quad j \in \mathbb{N},
$$
and the following characterization of their equality conditions.

\begin{corollary}[Rigidity theorem of higher-order Bergman kernels]\label{Higher Bergman Kernel Corollary}
Let $\Omega$ be a domain in $ \mathbb{C}$. Then for each $j \in \mathbb{N}$, there exists a $z_0 \in \Omega$ such that
$$
K^{(j)}(z_0) =\frac{j!(j+1)!  \pi^j}{ v^{j + 1}(\Omega) }
$$
if and only if either of the following holds true:

 \begin{enumerate}

\item [$1.$] $v(\Omega) < \infty$ and $\Omega = \mathbb{D}(z_0, \sqrt{\pi^{-1} v(\Omega)}) \setminus P$, where $P$ is a relatively closed polar set;

\item [$2.$]  $v(\Omega) = \infty$ and $\Omega = \mathbb{C} \setminus P$, where $P$ is a closed polar set.
\end{enumerate}

\end{corollary}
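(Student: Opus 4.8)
The plan is to splice the comparison estimate of B\l ocki and Zwonek \cite{BlZw} into the inequality chain \eqref{Domain Inequality Chain}, and then to read off the equality case from Theorem \ref{rigidity of analytic and logarithmic capacity with reciprocal of volume} (equivalently, from its consequence Corollary \ref{theorem 1} for the ordinary Bergman kernel). In the form needed, the B\l ocki--Zwonek estimate says
$$
K^{(j)}(z)\ \geq\ \frac{j!\,(j+1)!}{\pi}\,c_B^{2(j+1)}(z),\qquad z\in\Omega,\ j\in\mathbb{N},
$$
and it is an equality on disks: for $\Omega=\mathbb{D}(z_0,R)$ one has $c_B(z_0)=1/R$, while expanding in the orthogonal monomials $(z-z_0)^n$ gives $K^{(j)}(z_0)=j!\,(j+1)!/(\pi R^{2j+2})$, so the two sides agree. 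Inserting $c_B^2(z)\geq\pi/v(\Omega)$ from Theorem \ref{rigidity of analytic and logarithmic capacity with reciprocal of volume} yields the asserted bound $K^{(j)}(z)\geq j!\,(j+1)!\,\pi^j/v^{j+1}(\Omega)$.

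For the ``if'' direction: if $v(\Omega)<\infty$ and $\Omega=\mathbb{D}(z_0,R)\setminus P$ with $R=\sqrt{\pi^{-1}v(\Omega)}$ and $P$ relatively closed polar, then, polar sets being of zero area and removable for $L^2$ holomorphic functions, restriction identifies $A^2(\Omega)$ isometrically with $A^2(\mathbb{D}(z_0,R))$, so $K^{(j)}(z_0)=K^{(j)}_{\mathbb{D}(z_0,R)}(z_0)=j!\,(j+1)!\,\pi^j/v^{j+1}(\Omega)$ by the computation above; and if $v(\Omega)=\infty$ and $\Omega=\mathbb{C}\setminus P$ with $P$ closed polar, then $A^2(\Omega)=A^2(\mathbb{C})=\{0\}$, whence $K^{(j)}\equiv0=j!\,(j+1)!\,\pi^j/v^{j+1}(\Omega)$ under the convention $v^{-1}(\Omega)=0$.

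For the ``only if'' direction, suppose $K^{(j)}(z_0)=j!\,(j+1)!\,\pi^j/v^{j+1}(\Omega)$. Comparing with the B\l ocki--Zwonek bound forces $c_B^2(z_0)\leq\pi/v(\Omega)$; since Theorem \ref{rigidity of analytic and logarithmic capacity with reciprocal of volume} gives the reverse inequality, $c_B^2(z_0)=\pi/v(\Omega)$, so that theorem's equality case applies. If $v(\Omega)=\infty$ we are in its case $2$, $\Omega=\mathbb{C}\setminus P$ with $P\in\mathcal{N}_B$; here $K^{(j)}(z_0)=0$, a standard division argument then collapses $A^2(\Omega)$ to $\{0\}$, and exhausting $\mathbb{C}\setminus\Omega$ by compacta---each of which must be polar, since a non-polar compactum carries a nonzero $L^2$ holomorphic function on its complement---shows $P$ is polar. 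If $v(\Omega)<\infty$ we are in case $1$, $\Omega=\mathbb{D}(z_0,R)\setminus P$ with $R=\sqrt{\pi^{-1}v(\Omega)}$ and $P\cap\overline{\mathbb{D}(z_0,s)}\in\mathcal{N}_B$ for every $s<R$; such $P$ has zero area, so $v(\Omega)=\pi R^2$ and $K^{(j)}_{\mathbb{D}(z_0,R)}(z_0)=j!\,(j+1)!\,\pi^j/v^{j+1}(\Omega)=K^{(j)}(z_0)$. Writing $A^2(\Omega)=A^2(\mathbb{D}(z_0,R))\oplus W$ orthogonally in $L^2(\mathbb{D}(z_0,R))$ (legitimate since $v(P)=0$), a short computation gives $K^{(j)}_\Omega(z_0)=K^{(j)}_{\mathbb{D}(z_0,R)}(z_0)+\|\rho_W\|^2$ with $\rho_W\in W$ the representative of the functional $w\mapsto w^{(j)}(z_0)$ on $W$; hence equality forces $w^{(j)}(z_0)=0$ for all $w\in W$, and---the delicate point---this is shown to be incompatible with $W\neq\{0\}$ (a non-polar $P$ supplies elements of $W$ with prescribed $j$-jet at $z_0$, obtained by varying the generating logarithmic potential). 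Thus $A^2(\Omega)=A^2(\mathbb{D}(z_0,R))$, i.e. $P$ is removable for $A^2$ and therefore polar; this is exactly the content of case $1$. Alternatively, if one has the companion B\l ocki--Zwonek inequality $K^{(j)}(z)\geq j!\,(j+1)!\,\pi^j K^{j+1}(z)$, the same equality forces $K(z_0)=v^{-1}(\Omega)$ and one simply invokes Corollary \ref{theorem 1}.

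The principal obstacle is this promotion of $P$ from a null set of class $\mathcal{N}_B$ to a polar set: equality in $c_B^2\geq\pi/v(\Omega)$ alone controls $P$ only up to an analytic null set (Remark \ref{EX}), so one must exploit the finer rigidity of the $L^2$ kernel $K^{(j)}$, namely that enlarging the Bergman space by a non-polar obstruction strictly increases $K^{(j)}(z_0)$. A secondary, milder point is the collapse $K^{(j)}(z_0)=0\Rightarrow A^2(\Omega)=\{0\}$ on possibly unbounded domains, which needs a division/normal-families argument rather than just boundedness.
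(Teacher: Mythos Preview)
Your overall architecture is right, but you have weakened the B\l ocki--Zwonek inequality unnecessarily and this is exactly what creates the ``delicate point'' you struggle with. The estimate in \cite{BlZw} (equation \eqref{Higher order Bergman kernel inequality} in the paper) is stated with the \emph{logarithmic} capacity:
\[
K^{(j)}(z)\ \geq\ \frac{j!\,(j+1)!}{\pi}\,c_{\beta}^{\,2(j+1)}(z),
\]
not with $c_B$. Since $c_\beta\geq c_B$, your version follows from this, but is strictly weaker for the equality analysis. Feeding $c_\beta^2\geq \pi/v(\Omega)$ (from the chain \eqref{Domain Inequality Chain}) into the genuine inequality above, the hypothesis $K^{(j)}(z_0)=j!(j+1)!\pi^j/v^{j+1}(\Omega)$ forces $c_\beta^2(z_0)=\pi/v(\Omega)$, and then the second clause of Theorem~\ref{rigidity of analytic and logarithmic capacity with reciprocal of volume} gives directly that $P$ is polar. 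This is precisely the paper's one-line proof for $v(\Omega)<\infty$; the case $v(\Omega)=\infty$ is simply cited from \cite[Theorem~4]{BlZw}.

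By contrast, your route through $c_B$ only yields $c_B^2(z_0)=\pi/v(\Omega)$, hence $P\in\mathcal N_B$, and then you must promote $P$ to polar by hand. The orthogonal-decomposition argument you sketch is not complete: the claimed identity $K^{(j)}_\Omega(z_0)=K^{(j)}_{\mathbb D(z_0,R)}(z_0)+\|\rho_W\|^2$ requires that projection onto $A^2(\mathbb D(z_0,R))$ respect the jet constraints at $z_0$, which uses the Taylor reproducing property on the full disk and is not available for $f\in A^2(\Omega)$ that fail to extend across $P$; and the assertion that a non-polar $P$ supplies $w\in W$ with $w^{(j)}(z_0)\neq 0$ is not justified. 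The alternative inequality $K^{(j)}\geq j!(j+1)!\pi^j K^{j+1}$ you invoke is not in \cite{BlZw} and does not follow from \eqref{Higher order Bergman kernel inequality} together with $\pi K\geq c_\beta^2$ (the inequalities point the wrong way). All of this is avoided by using $c_\beta$ from the start.
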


A classical question (\cite[p. 20]{Stein}) raised by Stein is {\it what are the relations between the Bergman kernel $K$ and the Szeg\"{o} kernel $S$}? On any bounded domain $\Omega$ in $\mathbb{C}$ with Lipschitz boundary (see \cite{Be16, G72} for the $C^\infty$ smooth case and Proposition \ref{Relation with analytic capacity and szego kernel} for the Lipschitz case),
the Szeg\"{o} kernel $S$ and the analytic capacity $c_B$ satisfy the identity  $c_B(z) = 2\pi S(z)$. Using this and \eqref{Domain Inequality Chain}, we deduce the following inequality chain, which gives a relation between the on-diagonal Bergman and Szeg\"o kernels:

\begin{equation} \label{Lip Domain Inequality Chain}
\sqrt{ \pi K(z)}  \geq c_{\beta} (z) \geq   2\pi  S(z)    \geq  \sqrt{\frac{\pi}{v(\Omega)}} \geq \frac{2\pi}{\sigma(\partial \Omega)}, \quad z \in \Omega.
\end{equation}
Here $\sigma(\partial\Omega)$ denotes the arc-length of $\partial\Omega$, and the last inequality in \eqref{Lip Domain Inequality Chain} is the isoperimetric inequality.  In particular,  the inequalities say that $K$ always dominates $4 \pi  S^2$  on Lipschitz domains.  See also  \cite{CF} and the references therein for results on the comparison of these two kernels.  We characterize the equality conditions in \eqref{Lip Domain Inequality Chain} as below.

\begin{theorem} [Rigidity theorem of the Szeg\"{o} kernel] \label{NEW} 
Suppose $\Omega \subset \mathbb{C}$ is a bounded domain with Lipschitz boundary. Then, 
 \begin{enumerate}
\item [$1.$]\label{Case 1 NEW}  there exists some $z_0\in \Omega$ such that 
$$
 2\pi  S(z_0) =  \quad c_{\beta} (z_0)  \quad \text{or} \quad \sqrt{ \pi K(z_0)} 
 $$
   if and only if $\Omega$ is simply connected; 
 \item [$2.$]\label{Case 2 NEW}  there exists some $z_0\in \Omega$ such that 
 $$
 2\pi  S(z_0) =  \quad \sqrt{\frac{\pi}{v(\Omega)}}   \quad \text{or} \quad    \frac{2\pi}{\sigma(\partial \Omega)}\quad  \text{or} \quad \frac{1 } {\delta  (z_0)}% \quad \text{or} \quad \frac{1 } {\delta  (z_0)} 
 $$
  if and only if $\Omega = \mathbb{D}(z_0, \sqrt{\pi^{-1} v(\Omega) })$.
  \end{enumerate}
   
\end{theorem}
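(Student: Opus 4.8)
The plan is to use the identity $c_B(z)=2\pi S(z)$ on Lipschitz domains (Proposition \ref{Relation with analytic capacity and szego kernel}) to convert every assertion about the Szegő kernel into one about the analytic capacity $c_B$, and then to quote the rigidity results already available: Suita's Theorem \ref{Suita Theorem}, the equality statement for $c_\beta\ge c_B$ (Appendix \ref{AppendixA}), Theorem \ref{rigidity of analytic and logarithmic capacity with reciprocal of volume}, and the isoperimetric inequality. I will also use the two elementary monotonicity facts, together with their equality cases: for $\Omega_1\subseteq\Omega_2$ one has $c_B^{\Omega_1}(z)\ge c_B^{\Omega_2}(z)$ and $K_{\Omega_1}(z)\ge K_{\Omega_2}(z)$ on $\Omega_1$.

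\medskip\noindent\emph{Part 1.} Because $2\pi S(z_0)=c_B(z_0)$ and $c_B(z_0)\le c_\beta(z_0)\le\sqrt{\pi K(z_0)}$ by \eqref{Domain Inequality Chain}, the hypothesis forces $c_B(z_0)=c_\beta(z_0)$ (and, in the second alternative, also $\pi K(z_0)=c_B^2(z_0)$). By the equality characterization of $c_\beta\ge c_B$ — or, in the second alternative, by Suita's Theorem \ref{Suita Theorem} — this means $\Omega$ is biholomorphic to a disk less a relatively closed polar set. The crux is then to check that for a bounded \emph{Lipschitz} domain the removed set must be empty: since a polar set is removable for bounded holomorphic functions, the biholomorphism extends across it, so $\Omega$ would be a simply connected open set with only a polar (hence area-null) set deleted; but at a Lipschitz boundary point the complement of $\Omega$ contains a truncated cone and so has positive area, a contradiction. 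Hence $\Omega$ is biholomorphic to $\mathbb{D}$, i.e., simply connected. Conversely, if $\Omega$ is simply connected, let $\varphi:\mathbb{D}\to\Omega$ be a Riemann map and $z_0=\varphi(0)$; since $\sqrt{\pi K}$, $c_\beta$, $c_B$, and $2\pi S=c_B$ all define conformal metrics equal to $1$ at the origin of $\mathbb{D}$, we get $2\pi S(z_0)=c_B(z_0)=c_\beta(z_0)=\sqrt{\pi K(z_0)}=|\varphi'(0)|^{-1}$, so both equalities hold there.

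\medskip\noindent\emph{Part 2.} Via $c_B=2\pi S$: the equality $2\pi S(z_0)=\sqrt{\pi/v(\Omega)}$ is exactly $c_B^2(z_0)=\pi/v(\Omega)$, so Theorem \ref{rigidity of analytic and logarithmic capacity with reciprocal of volume} gives $\Omega=\mathbb{D}(z_0,\sqrt{\pi^{-1}v(\Omega)})\setminus P$ with $P\cap\overline{\mathbb{D}(z_0,s)}\in\mathcal{N}_B$ for all admissible $s$; as $\mathcal{N}_B$-sets are area-null, the non-thinness argument from Part 1 again forces $P=\varnothing$, so $\Omega=\mathbb{D}(z_0,\sqrt{\pi^{-1}v(\Omega)})$. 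If $2\pi S(z_0)=2\pi/\sigma(\partial\Omega)$, then the chain $2\pi S(z_0)\ge\sqrt{\pi/v(\Omega)}\ge 2\pi/\sigma(\partial\Omega)$ from \eqref{Lip Domain Inequality Chain} collapses, giving equality in the isoperimetric inequality and also $2\pi S(z_0)=\sqrt{\pi/v(\Omega)}$, which reduces to the previous case. Finally, from $\mathbb{D}(z_0,\delta(z_0))\subseteq\Omega$ and monotonicity, $2\pi S(z_0)=c_B(z_0)\le c_B^{\mathbb{D}(z_0,\delta(z_0))}(z_0)=\delta(z_0)^{-1}$; if equality holds, the Ahlfors extremal function of $\Omega$ restricts to an extremal self-map of $\mathbb{D}(z_0,\delta(z_0))$, so by the equality case of the Schwarz lemma it equals $(\zeta-z_0)/\delta(z_0)$ there, hence on all of $\Omega$ by the identity theorem, forcing $\Omega\subseteq\overline{\mathbb{D}(z_0,\delta(z_0))}$ and thus $\Omega=\mathbb{D}(z_0,\delta(z_0))$ with $\delta(z_0)=\sqrt{\pi^{-1}v(\Omega)}$. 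The converse is immediate, since on $\mathbb{D}(z_0,r)$ one has $2\pi S(z_0)=\sqrt{\pi/v(\Omega)}=2\pi/\sigma(\partial\Omega)=1/\delta(z_0)=1/r$.

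\medskip The main obstacle is the ``empty exceptional set'' step — showing that a bounded Lipschitz domain biholomorphic to a disk with a relatively closed polar (or $\mathcal{N}_B$) set removed is in fact a disk. This combines removability of such sets for bounded holomorphic functions with a boundary-regularity input (non-thinness, or positive lower area density, of Lipschitz boundaries), plus a short check that the extended map stays injective. Everything else is bookkeeping that reduces each case to a previously established rigidity theorem together with the classical equality cases of the Schwarz lemma and the isoperimetric inequality.
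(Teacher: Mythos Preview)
Your proof is correct and follows essentially the same route as the paper: use $c_B=2\pi S$ to translate everything into statements about $c_B$, then invoke Suita's Theorem, Theorem~\ref{main2}, Theorem~\ref{rigidity of analytic and logarithmic capacity with reciprocal of volume}, the isoperimetric inequality, and the Schwarz-lemma extremal argument for the $\delta(z_0)^{-1}$ case (this last one is verbatim the paper's argument).

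The only noteworthy difference is in the ``exceptional set is empty'' step. The paper disposes of it more directly: for Part~1 it observes that a Lipschitz domain is regular for the Dirichlet problem, which forces the removed polar set to be empty (via the boundary behavior of the Green's function under the biholomorphism); for Part~2 it simply notes that a Lipschitz boundary has no singleton connected components, while the $\mathcal N_B$ condition forces the removed set to be totally disconnected. Your approach instead extends $f^{-1}$ across the polar (or $\mathcal N_B$) set, argues that the extension stays injective, and then contradicts the exterior-cone condition at a putative removed point. This is valid, but the injectivity check you flag as the ``main obstacle'' is a genuine extra step (it needs an argument-principle or Hurwitz-type argument using that polar sets have measure zero); the paper's Dirichlet-regularity shortcut bypasses it entirely.
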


Next, by a monotonicity result of B\l{}ocki and Zwonek in \cite{BZ15} concerning the Euclidean volume of the sublevel sets of the Green's function, for $ z\in \Omega, -\infty < t_1 < t_2 < 0$,

\begin{equation} \label{inequalities with log capacity and sublevel sets of green's function}
 c_{\beta}^2(z) \ge \frac{\pi e^{2{t_1}} }{v{(\{G( \cdot, z) < t_1\})}}   \ge \frac{\pi e^{2{t_2}} }{v{(\{G( \cdot, z) < t_2\})}} \ge \, \frac{\pi}{ v(\Omega)}.
\end{equation}
See Remark \ref{mon}. Using the isoperimetric inequality and the classical PDE theory on the unique continuation property for harmonic functions, we obtain the following  rigidity theorem which examines the extremal cases  and characterizes domains on which equalities in \eqref{inequalities with log capacity and sublevel sets of green's function} hold.

\begin{theorem}[Rigidity theorem of sublevel sets of  Green's function] \label{lc}
Let $\Omega$ be a bounded domain in $\mathbb C$. 
Then $\Omega$ is a disk centered at $z_0$ possibly less a relatively closed polar subset if and only if
either of the following 
holds true:
  \begin{enumerate}
\item [$1.$]  $$c_{\beta}^2(z_0) = \frac{\pi e^{2{t_0}} }{v{(\{G( \cdot, z_0) < t_0\})}}$$
for some  $z_0\in \Omega$, and $t_0\in (-\infty, 0)$;

\item [$2.$] $$\frac{\pi e^{2{t_1}} }{v{(\{G( \cdot, z_0) < t_1\})}}    = \frac{\pi e^{2{t_2}} }{v{(\{G( \cdot, z_0) < t_2\})}} $$
for some  $z_0\in \Omega$, and $t_1\ne t_2$ in $(-\infty, 0)$;

\item [$3.$] $$\frac{\pi e^{2{t_0}} }{v{(\{G( \cdot, z_0) < t_0\})}} = \, \frac{\pi}{ v(\Omega)}$$
for some  $z_0\in \Omega$, and $t_0\in (-\infty, 0)$.
\end{enumerate}
  
\end{theorem}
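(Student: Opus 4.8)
The plan is to prove all three equivalences at once by reducing each equality in Conditions~1--3 to the single assertion that
$$
\phi(t):=\frac{\pi e^{2t}}{v(\{G(\cdot,z_0)<t\})},\qquad t\in(-\infty,0),
$$
is constant on some nondegenerate subinterval of $(-\infty,0)$, and then analyzing that extremal situation. The ``only if'' direction is immediate: if $\Omega=\mathbb{D}(z_0,r)\setminus P$ with $P$ a relatively closed polar set, then, polar sets being removable for the Green's function, $G_\Omega(\cdot,z_0)=\log(|\cdot-z_0|/r)$, so $\{G(\cdot,z_0)<t\}=\mathbb{D}(z_0,re^{t})\setminus P$ has volume $\pi r^{2}e^{2t}$, while $c_\beta(z_0)=1/r$ and $v(\Omega)=\pi r^{2}$; hence every quantity appearing in Conditions~1--3 equals $r^{-2}$ and all three equalities hold. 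For the converse, \eqref{inequalities with log capacity and sublevel sets of green's function} (see Remark~\ref{mon}) says precisely that $\phi$ is non-increasing on $(-\infty,0)$ with $\pi/v(\Omega)\le\phi\le c_\beta^2(z_0)$; moreover $v(\{G(\cdot,z_0)<t\})\uparrow v(\Omega)$ as $t\uparrow0$ by monotone convergence, so $\lim_{t\to0^-}\phi(t)=\pi/v(\Omega)$, and the Green's-function-near-the-pole asymptotics underlying \eqref{inequalities with log capacity and sublevel sets of green's function} give $v(\{G(\cdot,z_0)<t\})\sim\pi c_\beta^{-2}(z_0)e^{2t}$ as $t\to-\infty$, so $\lim_{t\to-\infty}\phi(t)=c_\beta^2(z_0)$. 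Thus Condition~2 forces $\phi$ to take the same value at two distinct points, Condition~3 forces it to attain its infimum $\lim_{t\to0^-}\phi$ at an interior point, and Condition~1 forces it to attain its supremum $\lim_{t\to-\infty}\phi$ at an interior point; in each case the monotonicity of $\phi$ makes it constant on a nondegenerate interval $[a,b]\subset(-\infty,0)$.

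Next I would run the standard isoperimetric/Cauchy--Schwarz analysis for the Green's function. Write $u:=G_\Omega(\cdot,z_0)$ and $V(t):=v(\{u<t\})$. For $t<0$ the set $\{u<t\}$ is a bounded domain, relatively compact in $\Omega$, and connected (a component avoiding $z_0$ would force $u\equiv t$ there by the maximum principle); for a.e.\ $t<0$ it is in addition a regular value of $u$, so $\{u=t\}$ is a finite union of smooth Jordan curves (Sard's theorem, $u$ being real-analytic off $z_0$). For such $t$ the divergence theorem gives $\int_{\{u=t\}}|\nabla u|\,d\sigma=2\pi$, the coarea formula gives $V'(t)=\int_{\{u=t\}}|\nabla u|^{-1}\,d\sigma$, Cauchy--Schwarz yields $\sigma(\{u=t\})^{2}\le2\pi V'(t)$, and the isoperimetric inequality yields $\sigma(\{u=t\})^{2}\ge4\pi V(t)$; together these give $(e^{-2t}V(t))'\ge0$, which is \eqref{inequalities with log capacity and sublevel sets of green's function}. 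Since $\phi$ is constant on $[a,b]$, the function $e^{-2t}V(t)$ is constant there, hence $(e^{-2t}V(t))'=0$ for a.e.\ $t\in(a,b)$; for each such level both displayed inequalities are equalities, so $\{u<t\}$ is a genuine disk (equality in the isoperimetric inequality, for an open connected set) and $|\nabla u|$ is constant on $\{u=t\}$.

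Finally I would identify $u$. Fix a level $t^{*}\in(a,b)$ realizing these equalities; then $D^{*}:=\{u<t^{*}\}$ is a disk, and $u-t^{*}$ is a negative harmonic function on $D^{*}\setminus\{z_0\}$ with a logarithmic pole at $z_0$ and vanishing boundary values on the (regular) circle $\partial D^{*}$, so $u-t^{*}=G_{D^{*}}(\cdot,z_0)=\log|\psi|$ for a M\"{o}bius map $\psi\colon D^{*}\to\mathbb{D}$ with $\psi(z_0)=0$. Pick another level $t^{**}\in(a,t^{*})$ realizing the equalities; then $\{u=t^{**}\}=\{|\psi|=e^{t^{**}-t^{*}}\}$ is a circle bounding a disk $B:=\{|\psi|<e^{t^{**}-t^{*}}\}\ni z_0$ with $\overline{B}\subset D^{*}$, and on it $|\nabla u|=|\psi'/\psi|$ is constant, so $|\psi'|$ is constant on $\partial B$. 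As $\log|\psi'|$ is harmonic on $D^{*}$ (since $\psi'$ never vanishes), the maximum principle forces $\psi'$ to be constant on $B$, hence on $D^{*}$ by analyticity; thus $\psi$ is affine and $D^{*}=\mathbb{D}(z_0,r^{*})$ is a disk centered at $z_0$. Therefore $u(z)=t^{*}+\log(|z-z_0|/r^{*})=\log(|z-z_0|/r)$ on $D^{*}\setminus\{z_0\}$ with $r:=r^{*}e^{-t^{*}}$, and since $u$ and $\log(|\cdot-z_0|/r)$ are both harmonic on the connected set $\Omega\setminus\{z_0\}$ and agree on the nonempty open subset $D^{*}\setminus\{z_0\}$, they agree on all of $\Omega\setminus\{z_0\}$. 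Then $u<0$ on $\Omega$ forces $\Omega\subseteq\mathbb{D}(z_0,r)$, and if $P:=\mathbb{D}(z_0,r)\setminus\Omega$ were non-polar it would possess a regular boundary point $\zeta$ with $|\zeta-z_0|<r$, at which $u(z)\to0$ as $z\to\zeta$ in $\Omega$, contradicting $u(z)=\log(|z-z_0|/r)\to\log(|\zeta-z_0|/r)\ne0$; hence $P$ is a relatively closed polar set and $\Omega=\mathbb{D}(z_0,r)\setminus P$, as desired.

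I expect the main obstacle to be the careful bookkeeping in the second step: verifying that for a.e.\ $t<0$ the sublevel set $\{u<t\}$ is relatively compact in $\Omega$ with rectifiable boundary $\{u=t\}$ (so that the flux identity, the coarea formula, and the isoperimetric inequality together with its equality characterization all apply), and then passing from the ``a.e.\ $t$'' equality conclusions to the particular levels $t^{*},t^{**}$ used afterwards. The identification of the endpoint limits $\lim_{t\to0^-}\phi(t)=\pi/v(\Omega)$ and $\lim_{t\to-\infty}\phi(t)=c_\beta^2(z_0)$, needed for Conditions~3 and~1, is the other delicate point, although it is essentially contained in the B\l{}ocki--Zwonek results behind \eqref{inequalities with log capacity and sublevel sets of green's function}.
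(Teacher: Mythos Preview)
Your proposal is correct, and the overall architecture—reducing all three cases to constancy of $\phi$ on a subinterval via the monotonicity in \eqref{inequalities with log capacity and sublevel sets of green's function}, then running the coarea/Cauchy--Schwarz/isoperimetric chain to get that at a.e.\ level the sublevel set is a disk with $|\nabla u|$ constant on its boundary—matches the paper exactly (this is the content of Theorem~\ref{ll} and the proof of Theorem~\ref{lc}). The genuine divergence is in how you pass from ``equality at one level'' to ``$G_{z_0}(z)=\log(|z-z_0|/r)$ on all of $\Omega$''. The paper invokes the unique continuation property for harmonic functions with Cauchy data on a boundary arc (citing \cite{Ta}): knowing both $G_0=t_0$ and $\partial_\nu G_0=\text{const}$ on a circular arc determines $G_0$ on $\Omega\setminus\Omega_{t_0}$, hence on $\Omega$ by uniqueness, and then Lemma~\ref{main} finishes. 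Your argument is more elementary and entirely complex-analytic: you use the first level $t^{*}$ to identify $u-t^{*}$ with $\log|\psi|$ for a M\"obius $\psi$ (Green's function of a disk), then use a second level $t^{**}$ and constancy of $|\nabla u|=|\psi'/\psi|$ there to force $|\psi'|$ constant on a concentric circle, whence $\psi'$ is constant by the maximum principle and analyticity, so the disk $D^{*}$ is centered at $z_0$; harmonic continuation does the rest. This avoids the PDE unique-continuation machinery entirely. The paper then quotes Lemma~\ref{main} to deduce that $P$ is polar, whereas you argue directly via Kellogg's theorem; your version is fine, though you should note that $P$ has empty interior (a relatively closed set in $\mathbb{D}(z_0,r)$ whose boundary in $\mathbb{D}(z_0,r)$ is polar cannot disconnect the disk, hence cannot have interior), so that indeed $P\subset\partial\Omega$ before invoking regularity.
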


At last, we study rigidity properties for bounded  domains in $\mathbb{C}^n, n\ge 1$. B\l{}ocki and Zwonek \cite{BZ15}  proved the following \textbf{multi-dimensional Suita Conjecture} concerning the Bergman kernel and the Azukawa indicatrix:  {\it for any pseudoconvex domain $\Omega \subset \mathbb{C}^n$, 
 $$
 K(z)\ge  {{v^{-1} (I^A (z)) } }, \quad z \in \Omega. 
 $$}Here $I^A$ denotes the Azukawa indicatrix (see (\ref{125}) for its definition). As the proof relied on an approximation of the domain by hyperconvex sub-domains, the pseudoconvexity was needed in \cite{BZ15}.
For our final result, by considering the connection between the two involved quantities in the multi-dimensional Suita Conjecture and the Euclidean distance function $\delta(z)$, 
 we derive  the following rigidity theorem without requiring the domains in $\mathbb C^n$ to be  pseudoconvex when $n > 1$.

\begin{theorem}  \label{delta-c}
Let $\Omega$ be a  bounded domain  in $\mathbb C^n$, $n\ge 1$. Then for all $z \in   \Omega$,

\begin{equation}  \label{A} 
v(I^A (z)) \ge \frac{\pi^n }{n! }\delta^{2n}(z),
\end{equation}
 and
\begin{equation}  \label{B} 
K(z)\le \frac{n! }{\pi^n\delta^{2n}(z)}.
\end{equation}
 Moreover, 
 either equality holds at some $z_0\in \Omega$ if and only if $\Omega$ is a ball centered at $z_0$.

\end{theorem}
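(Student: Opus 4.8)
The plan is to read off \eqref{A} and \eqref{B} from pure domain monotonicity and to put all the real work into the rigidity. Two ingredients are needed. First, since $\Omega$ is open and $z_0\in\Omega$, the Euclidean ball $\mathbb{B}(z_0,\delta(z_0))=\{z:|z-z_0|<\delta(z_0)\}$ lies in $\Omega$. Second, the pluricomplex Green's function of a ball with pole at its center is $g_{\mathbb{B}(z_0,R)}(z_0,\cdot)=\log(|\cdot-z_0|/R)$, so its Azukawa indicatrix at $z_0$ is exactly $\mathbb{B}(0,R)$, of volume $\pi^nR^{2n}/n!$, while $K_{\mathbb{B}(z_0,R)}(z_0)=n!/(\pi^nR^{2n})$. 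Now $D_1\subseteq D_2$ (with $z_0\in D_1$) gives $g_{D_1}(z_0,\cdot)\ge g_{D_2}(z_0,\cdot)$ on $D_1$, hence $I^A_{D_1}(z_0)\subseteq I^A_{D_2}(z_0)$, and also $K_{D_1}(z_0)\ge K_{D_2}(z_0)$; feeding in $\mathbb{B}(z_0,\delta(z_0))\subseteq\Omega$ yields \eqref{A} and \eqref{B} at $z_0$, and at every point of $\Omega$ by the same argument. For the ``if'' direction: if $\Omega=\mathbb{B}(z_0,R)$ then $\delta(z_0)=R$ and the displayed values make both \eqref{A} and \eqref{B} equalities at $z_0$.

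For ``only if'' I would split into the two equality hypotheses. Suppose first equality holds in \eqref{B} at $z_0$, and set $B=\mathbb{B}(z_0,\delta(z_0))$, so $K_\Omega(z_0)=K_B(z_0)$. The normalized Bergman extremal function $f=K_\Omega(z_0,\cdot)/\sqrt{K_\Omega(z_0)}$ has $\|f\|_{L^2(\Omega)}=1$, and the reproducing inequality on $B$ gives $K_B(z_0)=|f(z_0)|^2\le K_B(z_0)\int_B|f|^2\le K_B(z_0)\int_\Omega|f|^2=K_B(z_0)$, so $\int_{\Omega\setminus B}|f|^2=0$. Since $f(z_0)\neq 0$ and $\Omega$ is connected, $f$ cannot vanish on a nonempty open subset, so $\Omega\setminus\overline B=\emptyset$; being open, $\Omega\subseteq B$, hence $\Omega=B$. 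This part is routine.

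The main case, and the main obstacle, is equality in \eqref{A} at $z_0$; put $r=\delta(z_0)$ and argue by contradiction, assuming $\Omega\supsetneq\mathbb{B}(z_0,r)$. Connectedness of $\Omega$ produces a point $p\in\partial\mathbb{B}(z_0,r)\cap\Omega$ and then a ball $\mathbb{B}(p,\varepsilon)\subseteq\Omega$ with $0<\varepsilon<r$. Let $U:=\mathbb{B}(z_0,r)\cup\mathbb{B}(p,\varepsilon)\subseteq\Omega$, which is connected since $p\in\partial\mathbb{B}(z_0,r)$, and $X_0:=p-z_0$, so $|X_0|=r$. Slicing $U$ by the complex line $L=z_0+\mathbb{C}X_0$ and using $\lambda$ as coordinate via $z_0+\lambda X_0\leftrightarrow\lambda$, the component of $z_0$ in $U\cap L$ corresponds to the planar domain $V=\mathbb{D}\cup\mathbb{D}(1,\varepsilon/r)$, which strictly contains $\mathbb{D}$ with a nonempty open (hence non-polar) part outside $\overline{\mathbb{D}}$. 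The restriction of the plurisubharmonic function $g_U(z_0,\cdot)$ to $L$ is subharmonic with the right logarithmic pole at $z_0$, hence competes in the definition of $g_V(0,\cdot)$: $g_U(z_0,z_0+\lambda X_0)\le g_V(0,\lambda)$ near $0$. Taking $\limsup_{\lambda\to 0}$ of $|\lambda|^{-1}\exp(\cdot)$, and recalling that for a planar domain the Azukawa metric at $0$ in direction $1$ equals the logarithmic capacity $c_\beta^V(0)$, we obtain $A_\Omega(z_0;X_0)\le A_U(z_0;X_0)\le c_\beta^V(0)$, where $A_\Omega$ is the Azukawa metric (so $I^A_\Omega(z)=\{X:A_\Omega(z;X)<1\}$) and the first inequality is domain monotonicity. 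By strict monotonicity of the logarithmic capacity, $c_\beta^V(0)<c_\beta^{\mathbb{D}}(0)=1$, so $A_\Omega(z_0;X_0)<1$, i.e.\ $X_0\in I^A_\Omega(z_0)$ with $|X_0|=r$. Since $I^A_\Omega(z_0)$ is open and already contains $\mathbb{B}(0,r)$, it contains a set of positive volume outside $\overline{\mathbb{B}(0,r)}$, whence $v(I^A_\Omega(z_0))>\pi^nr^{2n}/n!=\pi^n\delta^{2n}(z_0)/n!$, contradicting equality in \eqref{A}. Therefore $\Omega=\mathbb{B}(z_0,r)=\mathbb{B}(z_0,\delta(z_0))$.

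The points I expect to have to pin down carefully are: the openness (indeed balanced-domain property) of the Azukawa indicatrix of an arbitrary bounded domain, which is what makes one bad direction on $\partial\mathbb{B}(0,r)$ enough to increase the volume; the behaviour of $g_\Omega(z_0,\cdot)$ under restriction to a complex line together with the identification of the slice's Azukawa metric with $c_\beta$; and the strict monotonicity of $c_\beta$ under inclusion with non-polar difference. It is precisely the use of the \emph{intrinsic} ball $\mathbb{B}(z_0,\delta(z_0))\subseteq\Omega$ — an ``external'' object — in place of a pseudoconvex exhaustion of $\Omega$ itself that makes pseudoconvexity unnecessary when $n>1$.
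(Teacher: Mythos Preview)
Your argument is correct. For the inequalities \eqref{A}, \eqref{B} and for the rigidity in \eqref{B} you do exactly what the paper does (domain monotonicity, then the $L^2$-extremal function argument forcing $\int_{\Omega\setminus B}|f|^2=0$). The difference is in the rigidity for \eqref{A}.

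The paper argues directly. Equality in \eqref{A} gives $v(I^A_\Omega(0)\setminus\mathbb B^n(0,\delta(0)))=0$; using only the absolute homogeneity of the Azukawa pseudometric (so the starlike/balanced property of $I^A$, not its openness) they write the excess as a polar integral and conclude that for $\sigma$-a.e.\ $X\in\partial\mathbb B^n(0,\delta(0))$ one has $\limsup_{\lambda\to0}(G_0(\lambda X)-\log|\lambda|)=0$. For each such $X$ the slice function $u(\lambda)=G_0(\lambda X)-\log|\lambda|$ is nonpositive subharmonic on $\mathbb D$ with $u(0)=0$, so $u\equiv0$; hence $G_0(\lambda X)=\log|\lambda|$ and the point $X$ lies on $\partial\Omega$. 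A density-plus-closedness step then gives $\partial\Omega=\partial\mathbb B^n(0,\delta(0))$.

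You argue by contrapositive: if $\Omega\supsetneq\mathbb B(z_0,r)$, pick $p\in\partial\mathbb B(z_0,r)\cap\Omega$, slice $\mathbb B(z_0,r)\cup\mathbb B(p,\varepsilon)$ by the line through $z_0$ and $p$, and reduce to a one-variable comparison $A_\Omega(z_0;X_0)\le c_\beta^V(0)<c_\beta^{\mathbb D}(0)=1$ via strict monotonicity of the logarithmic capacity. This produces a single direction $X_0\in I^A_\Omega(z_0)$ with $|X_0|=r$, and you then invoke openness of $I^A_\Omega(z_0)$ to get positive excess volume. The trade-off: your route is conceptually clean and ties the $n$-dimensional rigidity back to the one-dimensional $c_\beta$ theory already developed in the paper, but it needs the upper semicontinuity of $X\mapsto A_\Omega(z_0;X)$ (equivalently, that $\log A_\Omega(z_0;\cdot)$ is psh) as an external input. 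The paper's route avoids this by using only homogeneity and the maximum principle on slices, making it self-contained; it also yields the stronger intermediate statement that $G_0(\lambda X)=\log|\lambda X|/\delta(0)$ for a.e.\ direction. Both are valid; your three flagged points (openness of $I^A$, slice comparison, strict monotonicity of $c_\beta$) are all standard and your verification sketch for the latter two is correct.
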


As a consequence of \eqref{B},  we may add the distance function $\delta$ to the inequality chains \eqref{Domain Inequality Chain},
\eqref{Lip Domain Inequality Chain}, and \eqref{inequalities with log capacity and sublevel sets of green's function}  on bounded domains in $\mathbb C$.  See Remark \ref{Final remark} for more details.

 \medskip

The organization of the paper is as follows. 
In Section \ref{Section four} we prove a rigidity theorem of $c_B$ and $c_{\beta}$, and as applications, in Section \ref{Section five} we prove  rigidity theorems of the Bergman kernels, and new results on the Szeg\"o kernel on Lipschitz domains.  In Section \ref{Section 6}, we investigate rigidity phenomena for the logarithmic capacity and the sublevel sets of the Green's function. 
 In Section \ref{Section 7}, for bounded domains in $\mathbb{C}^n$ we prove Theorem \ref{delta-c}. In the appendices, we prove a rigidity phenomenon between  $c_B$ and $c_{\beta}$ based on a rigidity lemma of the Green's function, and   a stability result of the Szeg\"o kernel on Lipschitz domains.

\section{Rigidity Theorem of $c_B$ and $c_{\beta}$} \label{Section four}
Let $SH^{-}(X)$ denote the set of negative subharmonic functions on an open Riemann surface $X$. Given   $z_0 \in X$,  let $w$ be  a fixed local coordinate  in a neighbourhood of $z_0$ such that $w(z_0) =0$. The (negative) Green's function   is
\begin{equation} \label{Other Green's Function}
G(z, z_0) = \sup\{u(z): u \in SH^{-}(X), \limsup_{z \to z_0} u(z) - \log|w(z)| < \infty\}.
\end{equation}
 An open Riemann surface admits a Green's function if and only if there is a non-constant, negative subharmonic function defined on it.  The Green's function is strictly negative on the surface and harmonic except on the diagonal.

 \medskip

We say that a Borel set $P$ is polar if there is a subharmonic function $u \not\equiv -\infty$ defined in a neighbourhood of $P$ so that $P \subset \{z: u(z) = -\infty\}$.  For a domain $\Omega \subset \mathbb{C}$, a point $\zeta_0 \in \partial \Omega$ is said to be irregular if there is a $\zeta \in \Omega$ such that $\lim_{z \to \zeta_0} G(z, \zeta) \neq 0$ and regular otherwise.  By Kellogg's Theorem, cf. \cite[Theorem 4.2.5]{R95}, the set of irregular boundary points of a domain is a polar set.
The Green's function  on a Riemann surface induces the logarithmic capacity, which is used prominently (see \cite{R95} for its applications) in potential theory. Ahlfors introduced the analytic capacity for domains \cite{A47, AB50} in order to study Painlev\'{e}'s question: \textit{which compact sets $E$ in the complex plane are removable for the bounded holomorphic functions}?

\begin{definition}\label{Logarithmic capacity revisited}
Let $X$ be an open hyperbolic Riemann surface and $z_0\in X$.  The logarithmic capacity $c_{\beta}$ is defined as
 $$
c_{\beta}(z_0) = \lim_{z \to z_0} \exp(G(z, z_0) - \log|w(z)|).
 $$
The analytic capacity of a Riemann surface $X$ is defined as
\begin{equation}\label{analytic capacity}
c_B(z_0) = \sup\left \{ \left|{\partial f \over \partial w}(z_0)\right|: f \in \hbox{Hol}(X, \mathbb{D}), f(z_0) = 0\right \}.
\end{equation}
\end{definition}

  When we wish to emphasize the surface $X$, 
we will use the notations  $G_X$ and $c_{m; X}(\cdot)$, $m = \beta  \text{ or } B$.  If $h:X_1 \to X_2$ is a biholomorphism, then for $z\in X_1$,
 $$
c_{m; X_1}(z) = |h'(z)|c_{m; X_2}(h(z)), \quad m = \beta  \text{ or } B.
 $$
Thus, the logarithmic and analytic capacity are independent of the choice of the local coordinates and define conformally-invariant metrics $c_\beta(z)|dz|$ and $c_B(z)|dz|$. For each $z_0 \in X$, there exists an extremal function $f_0$ for the analytic capacity, unique up to rotation; that is, $f_0$ belongs to the family described in \eqref{analytic capacity} and $\left | {df_0 \over dw}(z_0) \right | = c_B(z_0)$.

\medskip

We restrict our attention  to domains in $\mathbb{C}$.  With this restriction we will be able to examine the relationship between the domain functions $c_\beta, c_B$ and the Euclidean volume of the domain. In the literature, the analytic capacity is often defined in terms of compact sets.

\begin{definition}{\normalfont\cite{G72}} \label{Analytic capacity of compact set}
The analytic capacity $\gamma(E)$ of a compact subset $E \subset \mathbb{C}$ is
$$
\gamma(E) = \sup\{|g'(\infty)|: g \in \hbox{Hol}(\mathbb{C}_{\infty} \setminus E, \mathbb{D}), g(\infty) = 0\}
$$
where
$$
g'(\infty) = \lim_{z \to \infty} z(g(z) - g(\infty)).
$$
\end{definition}

\begin{notation*}
In this section, $\Omega - \{z_0\}$ will denote the translation of the domain $\Omega$ by $z_0$ and
$$
j_{z_0}(z) := {1 \over z - z_0}.
$$
For ease of notation, when $z_0 = 0$, let $j = j_{z_0}$.
\end{notation*}

The two definitions of the analytic capacities presented in this paper are related as follows.

\begin{lemma} \label{Equivalence of definitions lemma} For any domain $\Omega \subset \mathbb{C}$ and $z_0\in \Omega$,
$
c_B(z_0) = \gamma(\mathbb{C}_{\infty} \setminus j_{z_0}(\Omega)).
$
\end{lemma}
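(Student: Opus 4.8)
The plan is to reduce the identity $c_B(z_0) = \gamma(\mathbb{C}_\infty \setminus j_{z_0}(\Omega))$ to the definitions by transporting the extremal problem for $c_B(z_0)$ on $\Omega$ to an extremal problem at $\infty$ via the Möbius map $j_{z_0}$. First I would reduce to the case $z_0 = 0$: since $j_{z_0}(z) = j_0(z - z_0)$ and translation is a biholomorphism under which both $c_B$ and $\gamma$ (of the complementary set) are unchanged, it suffices to show $c_B(0) = \gamma(\mathbb{C}_\infty \setminus j(\Omega))$ where $j(z) = 1/z$, assuming $0 \in \Omega$. Set $E := \mathbb{C}_\infty \setminus j(\Omega)$, which is a compact subset of $\mathbb{C}$ (note $j(\Omega)$ is an open neighborhood of $\infty$ in $\mathbb{C}_\infty$, so its complement is compact and, since $0 \notin E$ when $\infty \in \Omega$... more precisely one should track whether $0 \in \Omega$; if not, $E$ contains $\infty$ has been handled — here $0 \in \Omega$ so $\infty \in j(\Omega)$, hence $E \subset \mathbb{C}$ is compact).

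The core step is a bijection between the competitor families. Given $f \in \mathrm{Hol}(\Omega, \mathbb{D})$ with $f(0) = 0$, define $g := f \circ j^{-1} = f \circ j$ on $\mathbb{C}_\infty \setminus E = j(\Omega)$; since $j$ is a biholomorphism of $j(\Omega)$ onto $\Omega$ sending $\infty$ to $0$, we get $g \in \mathrm{Hol}(\mathbb{C}_\infty \setminus E, \mathbb{D})$ with $g(\infty) = 0$, and conversely every such $g$ arises this way from $f = g \circ j$. The only thing left is to match the two functionals under this correspondence: I would compute
$$
g'(\infty) = \lim_{w \to \infty} w\,g(w) = \lim_{w \to \infty} w\, f(1/w) = \lim_{\zeta \to 0} \frac{f(\zeta)}{\zeta} = f'(0),
$$
using $f(0) = 0$, so $|g'(\infty)| = |f'(0)| = \left| \frac{\partial f}{\partial w}(0) \right|$. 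Taking suprema over the (in bijection, functional-preserving) families gives $c_B(0) = \gamma(E)$, and combined with the translation reduction this yields the lemma.

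The main obstacle — really the only subtlety beyond bookkeeping — is handling the point at infinity carefully: one must confirm that $j(\Omega)$ is genuinely an open subset of $\mathbb{C}_\infty$ containing $\infty$ (so that "Hol on $\mathbb{C}_\infty \setminus E$" makes sense and Definition \ref{Analytic capacity of compact set} applies verbatim), that $E$ is compact, that $g = f \circ j$ extends holomorphically across $\infty$ with value $0$ (immediate since $f$ is holomorphic near $0 = j(\infty)$ with $f(0)=0$), and that the limit defining $g'(\infty)$ is exactly the derivative limit for $f$ at $0$ after the substitution $\zeta = 1/w$. None of these requires the potential theory developed earlier; it is a direct unwinding of the two definitions through the conformal inversion, so I would present it compactly rather than belabor the routine limit computation.
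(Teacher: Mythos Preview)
Your proposal is correct and follows essentially the same route as the paper: reduce by translation to $z_0=0$, set $E=\mathbb{C}_\infty\setminus j(\Omega)$, exhibit the bijection $f\leftrightarrow g=f\circ j$ between the two competitor families, and check $g'(\infty)=f'(0)$ before taking suprema. The paper's argument is terser and omits the topological bookkeeping you spell out about $E$ being compact and $g$ being holomorphic at $\infty$, but the substance is identical.
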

\begin{proof} Since $j(\Omega - \{z_0\}) = j_{z_0}(\Omega)$ and
$
c_{B; \Omega}(z_0) = c_{B; \Omega - \{z_0\}}(0),
$
it suffices to prove the lemma when $z_0 = 0 \in \Omega$.
Let $E = \mathbb{C}_{\infty} \setminus j(\Omega)$.
Notice that there is a one-to-one correspondence between the two function sets $\mathcal A: = \{g\in \hbox{Hol}(\mathbb{C}_{\infty} \setminus E, \mathbb{D}): g(\infty) = 0\}$ and  $\mathcal B: =\{h\in \hbox{Hol}(\Omega, \mathbb{D}): h(0) = 0 \}$, as $g \circ j\in \mathcal B$ whenever $g\in \mathcal A$, and vice versa. Moreover, $(g \circ j)'(0)
= g'(\infty)$.  The lemma then follows directly from the definitions of the two analytic capacities.

\end{proof}

The analytic capacity is difficult to compute in general for most domains.  It does however satisfy a lower bound referred to as the {\bf Ahlfors-Beurling Inequality} (see \cite{AB50}, \cite[Theorem 4.6, Chapter III]{G72}). Namely, for any compact set $E \subset \mathbb{C}$,
\begin{equation} \label{Analytic Capacity Lower Bound}
\gamma^2(E) \geq {v(E) \over \pi}.
\end{equation}
 
For any $z\in \Omega$, letting $E$ in (\ref{Analytic Capacity Lower Bound}) be $ \mathbb{C}_{\infty} \setminus j_{z}(\Omega)$, and  combining with Lemma \ref{Equivalence of definitions lemma}, we obtain
\begin{equation} \label{analytic capacity theorem main eqn 2}
c_B^2(z) \geq {v(\mathbb{C}_{\infty} \setminus j_{z}(\Omega)) \over \pi}.
\end{equation}
The following serves as a rigidity theorem concerning  \eqref{analytic capacity theorem main eqn 2}.

\begin{theorem}\label{jth}
Let $\Omega$ be a  domain in $\mathbb C$ with $v(\Omega)<\infty$. There exists a $z_0 \in \Omega$ such that \begin{equation} \label{analytic capacity theorem main eqn 3}
c_B^2(z_0) = {v(\mathbb{C}_{\infty} \setminus j_{z_0}(\Omega)) \over \pi}
\end{equation}
  if and only if  $\Omega = {\mathbb D}( z_1, r) \setminus P$  for some $z_1\in \mathbb C, r>0$, where  $P$ satisfies
\begin{equation}\label{NB}
    P \cap \overline{\mathbb{D}(z_1, s)} \in \mathcal{N}_B, \quad \hbox{for all } s < r.
\end{equation}
If additionally \begin{equation}\label{cbe}
    c_{\beta}^2(z_0) = {v(\mathbb{C}_{\infty} \setminus j_{z_0}(\Omega)) \over \pi},
\end{equation} then $P$ above is a relatively closed polar set.
\end{theorem}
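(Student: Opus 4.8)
The plan is to reduce everything to the equality case of the Ahlfors--Beurling inequality \eqref{Analytic Capacity Lower Bound}, and then to recover the shape of $\Omega$ from the known equality characterization for $\gamma$. First I would translate so that $z_0 = 0 \in \Omega$; then by Lemma \ref{Equivalence of definitions lemma}, $c_B^2(0) = \gamma^2(\mathbb{C}_\infty \setminus j(\Omega))$, and \eqref{analytic capacity theorem main eqn 3} becomes the statement that $\gamma^2(E) = v(E)/\pi$ for the compact set $E := \mathbb{C}_\infty \setminus j(\Omega)$. Here I would invoke the classical rigidity for the Ahlfors--Beurling inequality: equality $\gamma^2(E) = v(E)/\pi$ holds if and only if $E$ is, up to a set of analytic capacity zero (equivalently, up to modification on an $\mathcal{N}_B$-set), a closed disk $\overline{\mathbb{D}(a,\rho)}$. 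More precisely, the extremal function forcing equality must be (a rotation of) the Cauchy transform of normalized area measure on a disk, i.e.\ $g(z) = \rho^2/(z-a)$ off that disk; this pins down $E$ modulo $\mathcal{N}_B$. If this sharp rigidity statement is not available in the exact form needed, I would instead argue directly: the extremal function $g$ for $\gamma(E)$ satisfies $|g'(\infty)|^2 = v(E)/\pi$, and by the standard proof of \eqref{Analytic Capacity Lower Bound} (expanding $g$ near $\infty$ and applying the isoperimetric/area argument to $g(\mathbb{C}_\infty\setminus E)$), equality forces $g$ to map $\mathbb{C}_\infty \setminus E$ onto $\mathbb{D}$ as a proper degree-one map off a capacity-zero set, whence $\mathbb{C}_\infty \setminus E$ is conformally a disk minus an $\mathcal{N}_B$-set.

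Next I would untwist through $j$. Writing $\mathbb{C}_\infty \setminus E = j(\Omega)$, if $j(\Omega) = \mathbb{D}(a,\rho) \setminus Q$ with $Q \in \mathcal{N}_B$ locally, then applying $j^{-1} = j$ again sends $\mathbb{D}(a,\rho)$ to either a disk, a half-plane, or a disk's exterior, depending on whether $0$ lies inside, on, or outside $\overline{\mathbb{D}(a,\rho)}$. Since $0 \in \Omega$ and $v(\Omega) < \infty$, only the case $0 \in \mathbb{D}(a,\rho)$ is compatible (a half-plane or exterior of a disk has infinite area, and $0 \notin \overline{\mathbb{D}(a,\rho)}$ would force $0 \notin \Omega$ after one more inversion — one has to check this carefully). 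So $\Omega = \mathbb{D}(z_1,r) \setminus P$ for suitable $z_1, r$, and the $\mathcal{N}_B$-condition \eqref{NB} transfers from $Q$ to $P$ because $j$ is biholomorphic on compact subsets avoiding $0$ and $\mathcal{N}_B$ is a conformal invariant (a compact set is $\mathcal{N}_B$ iff its image under a Möbius map is). The converse direction is easier: if $\Omega = \mathbb{D}(z_1,r)\setminus P$ with \eqref{NB}, then removing the $\mathcal{N}_B$-set does not change $c_B$ (bounded holomorphic functions extend across it), so $c_B(z_0)$ equals that of the disk $\mathbb{D}(z_1,r)$, which one computes directly, and similarly $v(\mathbb{C}_\infty \setminus j_{z_0}(\Omega)) = v(\mathbb{C}_\infty \setminus j_{z_0}(\mathbb{D}(z_1,r)))$ since $\mathcal{N}_B$-sets have zero area; a direct computation with the Möbius image of a disk then verifies \eqref{analytic capacity theorem main eqn 3}.

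For the final "moreover" clause: assuming additionally \eqref{cbe}, I would use the inequality chain. From \eqref{Domain Inequality Chain} or more directly $c_\beta \geq c_B$ together with \eqref{analytic capacity theorem main eqn 2}, equality \eqref{cbe} forces both $c_\beta(z_0) = c_B(z_0)$ \emph{and} $c_B^2(z_0) = v(\mathbb{C}_\infty \setminus j_{z_0}(\Omega))/\pi$. The equality $c_\beta = c_B$ at one point is exactly the rigidity situation handled (via Minda's theorem, or the appendix) which forces $\Omega$, now known to be a disk minus an $\mathcal{N}_B$-set, to actually be a disk minus a relatively closed polar set: indeed $c_\beta = c_B$ at a point on an open Riemann surface characterizes the disk-less-polar-set case, and intersecting with what we already know ($\Omega = \mathbb{D}(z_1,r)\setminus P$) upgrades $P$ from $\mathcal{N}_B$ to polar. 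Alternatively, one shows directly that on $\mathbb{D}(z_1,r)\setminus P$ one has $c_\beta(z_0) = c_{\beta;\mathbb{D}(z_1,r)}(z_0)$ only when $P$ is polar, since the logarithmic capacity \emph{does} see non-polar $\mathcal{N}_B$-sets (the Green's function of $\mathbb{D}\setminus P$ differs from that of $\mathbb{D}$ precisely when $P$ has positive capacity).

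The main obstacle I anticipate is the sharp rigidity for the Ahlfors--Beurling inequality: the standard references state \eqref{Analytic Capacity Lower Bound} but the equality case — that $\gamma^2(E) = v(E)/\pi$ forces $E$ to be a disk up to an $\mathcal{N}_B$-set rather than merely up to a null set for area — requires care, because one must rule out $E$ being, say, a disk with a positive-capacity but zero-area "hair" removed, and confirm that the only freedom is exactly modification on $\mathcal{N}_B$-sets. Handling the placement of $0$ relative to the disk $\mathbb{D}(a,\rho)$ after the two inversions, and the bookkeeping of how $\mathcal{N}_B$ and polarity transfer under $j$, are the other technical points but should be routine once the rigidity for $\gamma$ is in hand.
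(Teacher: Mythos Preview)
Your overall plan---reduce \eqref{analytic capacity theorem main eqn 3} via Lemma \ref{Equivalence of definitions lemma} to the equality case of the Ahlfors--Beurling inequality for $E=\mathbb{C}_\infty\setminus j_{z_0}(\Omega)$, read off the shape of $\Omega$, and handle the polar upgrade through the rigidity $c_\beta=c_B$ (Theorem \ref{main2})---is exactly the paper's route, and your treatment of the converse direction and of the ``moreover'' clause matches the paper's.

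The gap is precisely the step you flag as the main obstacle: obtaining the $\mathcal{N}_B$ condition \eqref{NB}. The equality analysis of Ahlfors--Beurling, which the paper carries out in detail (noting that this observation is not stated in the literature), shows only that $E$ equals a closed disk $\overline D$ together with a compact set of \emph{Lebesgue measure zero}; it yields nothing about $\mathcal{N}_B$. Your alternative sketch (``expanding $g$ near $\infty$ and applying the isoperimetric/area argument to $g(\mathbb{C}_\infty\setminus E)$'') does not correspond to how \eqref{Analytic Capacity Lower Bound} is actually proved and does not produce an $\mathcal{N}_B$ conclusion. The paper supplies the missing ingredient in a separate second step: once one knows $\Omega=\mathbb{D}(z_1,r)\setminus P$ with $v(P)=0$, the explicit M\"obius map
\[
h(z)=\frac{r(z-z_0)}{r^2-(\bar z_0-\bar z_1)(z-z_1)}
\]
is verified to be extremal for $c_B(z_0)$, and then Havinson's theorem \cite[Theorem 28]{H64} on the image of the extremal function gives $h(\Omega)=\mathbb{D}\setminus Q$ with $Q\cap\overline{\mathbb{D}(0,s)}\in\mathcal{N}_B$ for every $s<1$, which pulls back to \eqref{NB}. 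Without Havinson's theorem (or an equivalent statement about the range of the Ahlfors extremal map), your argument stalls at ``disk minus a measure-zero set'' and cannot reach \eqref{NB}.

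One small geometric correction: you write $j(\Omega)=\mathbb{D}(a,\rho)\setminus Q$, but $j(\Omega)=\mathbb{C}_\infty\setminus E$ is the \emph{exterior} of the disk $\overline D$ (less a null set), not its interior. The case split on the position of $0$ should be made for $\overline D\subset E$: since $v(\Omega)<\infty$ forces $0\in D$, applying $j$ to the exterior of $\overline D$ gives the bounded disk $\mathbb{D}(z_1,r)$ containing $\Omega$.
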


\begin{proof}
If $\Omega = \mathbb{D}(z_1, r)\setminus Q$, where
$$
Q \cap \overline{\mathbb{D}(z_1, s)} \in \mathcal{N}_B, \quad \hbox{for all }s < r,
$$
then  by the classical Schwarz lemma, 
\begin{equation}\label{analytic capcacity of general disk}
c_B(z_0) =  \frac{ r}{r^2-|z_0-z_1|^2}.
\end{equation}
On the other hand, a direct computation gives  
$$
   v(\mathbb C_\infty\setminus j_{z_0}(\mathbb{D}(z_1, r))) = \frac{\pi r^2}{(r^2-|z_0-z_1|^2)^2}.  
$$  
Equation (\ref{analytic capacity theorem main eqn 3}) is proved.

For the other direction, let $E = \mathbb{C}_{\infty}\setminus j_{z_0}(\Omega)$. Then $E$ is compact, and since $v(\Omega)<\infty$,  $v(E)>0$. Define $f \in \mathcal{O}(\Omega) \cap C(\overline{\Omega})$ by
$$
f(z) = {1 \over \sqrt{\pi v(E)}}\int_{E} {1 \over w - {1 \over z - z_0}}dv(w).$$
Then $f(z_0) = 0$ and $f^{\prime}(z_0) = -\sqrt{\pi^{-1}v(E)}$.  The Ahlfors-Beurling inequality states that $|f(z)| \leq 1$ for all $z \in \mathbb{C}_{\infty}$.  By the maximum modulus theorem, $|f(z)| < 1$ on $\Omega$.  Thus, $f$ is an extremal function for $c_B$, which implies $\sup_{z \in \Omega} |f(z)| = 1$.  By continuity there exists $z_2 \in \partial \Omega$ such that $|f(z_2)| = 1$.  We observe, after a careful inspection of the proof of the Ahlfors-Beurling inequality, as given in \cite[Lemma 5.3.6]{R95}, that $E$ must be a union of a closed disk with a  closed measure-zero set.  For completeness we resupply the proof since this observation is not stated in the literature, cf. \cite{AB50, G72, R95}.  Indeed, after a rotation and translation of $E$ we may assume that 
\begin{equation}\label{Ahlfors-Beurling proof f(z2)}
1 = |f(z_2)| = {1 \over \sqrt{\pi v(E)}}\int_{E} {1 \over w} dv(w).
\end{equation}
Let $D: =\{w\in\mathbb C: Re w^{-1} >(2a)^{-1}\}$ be a disk such that $v(D)=v(E)$. Then  $v(E\setminus D) = v(D\setminus E)$ and so
\begin{equation}\label{v2}
    \int_{E\setminus D}Re\frac{1}{w}dv(w)\leq  \int_{E\setminus D}\frac{1}{2a}dv(w) = \int_{D\setminus E}\frac{1}{2a}dv(w)\leq \int_{D\setminus E}Re\frac{1}{w}dv(w). 
\end{equation} 
This implies
\begin{equation*}
    \begin{split}
        \int_E \frac{1}{w}dv(w) =  \int_E Re \frac{1}{w}dv(w)=&\int_{E\cap D} Re\frac{1}{w}dv(w)+ \int_{E\setminus D} Re\frac{1}{w}dv(w)\\
        \leq & \int_{D\cap E} Re\frac{1}{w}dv(w)+ \int_{D\setminus E} Re\frac{1}{w}dv(w)\\
        =& \int_{D} Re\frac{1}{w}dv(w)\\
        =&\int_{-\frac{\pi}{2}}^{\frac{\pi}{2}}\int_0^{2a\cos\theta} \cos\theta dr d\theta = \pi a=\sqrt{\pi v(E)}.
    \end{split}
\end{equation*}
Comparing with \eqref{Ahlfors-Beurling proof f(z2)}, both inequalities in \eqref{v2} become equalities and thus $v(E \setminus D) = v(D \setminus E) = 0$. Since $E$ is compact, $\overline{D} \subset E$.  Consequently,  $\Omega = \mathbb{D}(z_1, r) \setminus P$ for some $z_1\in \mathbb C$ and $r = \sqrt{v(\Omega)\pi^{-1}}$, where  $P$ is a relatively closed set of measure 0.

To further show that $P$ satisfies \eqref{NB}, consider $$h(z) = \frac{r(z-z_0)}{r^2 - (\overline{z}_0 - \overline{z}_1)(z - z_1)}, \ \ z\in\Omega.$$ It is not hard to verify that $h\in \hbox{Hol}(\Omega, \mathbb D)$, $h(z_0)=0$ and $$h'(z_0) = \frac{r}{r^2-|z_0-z_1|^2}.$$ Thus $h$ is an extremal function for $c_B$ at $z_0$, by \eqref{analytic capcacity of general disk}. By \cite[Theorem 28]{H64}, the image of the extremal function  satisfies $h(\Omega) = \mathbb{D} \setminus Q$ where
$$
Q = h(P), \quad Q \cap \overline{\mathbb{D}(0, r)} \in \mathcal{N}_B, \quad \hbox{for all }0 \leq r < 1.
$$ 
Thus \eqref{NB} is proved.

If additionally \eqref{cbe} holds, then by Theorem \ref{main2}, $h$ is a biholomorphism from $\Omega$ to $\mathbb{D}(0, 1)\setminus Q$, where $Q$ is a relatively closed polar set. This implies that $P$ is also a relatively closed polar set.
Conversely if $\Omega = \mathbb{D}(z_1, r)\setminus P$ for  a relatively closed polar set $P$, then a direct computation shows $c_{\beta}^2(z_0) = \pi^{-1}{v(\mathbb{C}_{\infty} \setminus j_{z_0}(\Omega))}$.  The proof   is complete.

\end{proof}

The proof of Theorem \ref{jth} indicates the center of $\Omega$ may not necessarily be $z_0$, at which the equality \eqref{analytic capacity theorem main eqn 3} holds. Using the lemma below, we will show that if the stronger equality $c_B^2(z_0) = \pi v^{-1}(\Omega)$ holds for $v(\Omega)<\infty$, then in the conclusion of the preceding theorem the center of $\Omega$ must be $z_0$.

\begin{lemma} \label{Volume Lemma}
Let $\Omega \subset \mathbb{C}$ be a domain with $v(\Omega) < \infty$.  Then for all $z\in \Omega$,
\begin{equation} \label{Volume Lemma Inequality}
{\pi \over v(\Omega)} \leq {v(\mathbb{C}_{\infty} \setminus j_{z}(\Omega)) \over \pi},
\end{equation}
and equality holds at some $z_0\in \Omega$ if and only if $\Omega$ is a disk centered at $z_0$ less a relatively closed set of measure zero.
\end{lemma}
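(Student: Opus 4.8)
The plan is to reduce to the case $z=0$, rewrite the quantity $v(\mathbb{C}_\infty\setminus j_z(\Omega))$ as a weighted area integral over the complement of $\Omega$ using the Jacobian of the inversion, and then finish with a rearrangement (bathtub-principle) comparison against a disk.

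\textbf{Reduction and change of variables.} Since Euclidean volume is translation invariant and $j_z(\Omega)=j_0(\Omega-\{z\})$ with $0\in\Omega-\{z\}$, I may assume $z=z_0=0\in\Omega$ and write $j=j_0$. The inversion $w\mapsto 1/w$ is a homeomorphism of $\mathbb{C}_\infty$ interchanging $0$ and $\infty$, so $E:=\mathbb{C}_\infty\setminus j(\Omega)=j(\mathbb{C}_\infty\setminus\Omega)$ is compact; as $0\in\Omega$ we have $\mathbb{C}_\infty\setminus\Omega=(\mathbb{C}\setminus\Omega)\cup\{\infty\}$ with $0\notin\mathbb{C}\setminus\Omega$, hence $E=j(\mathbb{C}\setminus\Omega)\cup\{0\}\subset\mathbb{C}$, and, discarding the null point $\{0\}$, the change of variables $u=1/w$ (whose real Jacobian is $|w|^{-4}$) gives $v(E)=\int_{\mathbb{C}\setminus\Omega}|w|^{-4}\,dv(w)$. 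Thus \eqref{Volume Lemma Inequality} is equivalent to the scale-free inequality $v(\Omega)\int_{\mathbb{C}\setminus\Omega}|w|^{-4}\,dv\ge\pi^2$.

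\textbf{The rearrangement estimate.} Put $R:=\sqrt{\pi^{-1}v(\Omega)}$ and $\Omega^*:=\mathbb{D}(0,R)$, so $v(\Omega^*)=v(\Omega)$ and therefore $v(\Omega^*\setminus\Omega)=v(\Omega\setminus\Omega^*)$. On $\Omega^*\setminus\Omega\subset\{|w|<R\}$ one has $|w|^{-4}>R^{-4}$, and on $\Omega\setminus\Omega^*\subset\{|w|\ge R\}$ one has $|w|^{-4}\le R^{-4}$; subtracting the integral over $\mathbb{C}\setminus\Omega^*$ from the integral over $\mathbb{C}\setminus\Omega$ reduces to $\int_{\Omega^*\setminus\Omega}|w|^{-4}\,dv-\int_{\Omega\setminus\Omega^*}|w|^{-4}\,dv\ge R^{-4}(v(\Omega^*\setminus\Omega)-v(\Omega\setminus\Omega^*))=0$. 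A polar-coordinate computation gives $\int_{\{|w|\ge R\}}|w|^{-4}\,dv=\pi/R^2=\pi^2/v(\Omega)$, which yields \eqref{Volume Lemma Inequality}.

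\textbf{Equality.} Equality in the estimate above forces $\int_{\Omega^*\setminus\Omega}|w|^{-4}\,dv=R^{-4}v(\Omega^*\setminus\Omega)$, and since $|w|^{-4}>R^{-4}$ strictly on $\{|w|<R\}$ this forces $v(\Omega^*\setminus\Omega)=0$, hence also $v(\Omega\setminus\Omega^*)=0$. As $\Omega$ is open, $\Omega\setminus\Omega^*$ is an open null set, hence empty, so $\Omega\subseteq\mathbb{D}(0,R)$ and $P:=\mathbb{D}(0,R)\setminus\Omega$ is a relatively closed subset of $\mathbb{D}(0,R)$ of measure zero; that is, $\Omega=\mathbb{D}(0,R)\setminus P$ with $\mathbb{D}(0,R)$ centered at $z_0=0$. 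Conversely, if $\Omega=\mathbb{D}(0,R)\setminus P$ with $P$ relatively closed of measure zero, then $v(\Omega)=\pi R^2$ and $v(E)=\int_{\{|w|\ge R\}}|w|^{-4}\,dv=\pi^2/v(\Omega)$, so equality holds. I expect the only genuinely delicate point to be the bookkeeping for the inversion on $\mathbb{C}_\infty$ (compactness of $E$ and the harmless point over $0$ and $\infty$); once the identity $v(\mathbb{C}_\infty\setminus j(\Omega))=\int_{\mathbb{C}\setminus\Omega}|w|^{-4}\,dv$ is secured, the rest is the classical rearrangement argument together with its standard equality analysis.
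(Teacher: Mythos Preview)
Your proof is correct and follows essentially the same rearrangement argument as the paper: reduce to $z_0=0$, convert $v(\mathbb{C}_\infty\setminus j(\Omega))$ to $\int_{\mathbb{C}\setminus\Omega}|w|^{-4}\,dv$ via the Jacobian of the inversion, and compare with the disk of equal area using that $|w|^{-4}$ is strictly larger inside the disk than outside (the paper normalizes to $v(\Omega)=\pi$ and phrases the comparison through the sets $S_1=\mathbb{D}\setminus\Omega$, $S_2=\Omega\setminus\mathbb{D}$, but the substance is identical). One small wording slip: $\Omega\setminus\Omega^*=\Omega\cap\{|w|\ge R\}$ is not open; argue instead that $\Omega\setminus\overline{\Omega^*}$ is an open null set, hence empty, so $\Omega\subset\overline{\Omega^*}$ and then openness of $\Omega$ gives $\Omega\subset\Omega^*$.
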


\begin{proof} If $B \subset \mathbb{C}$ is a set with $v(B) = 0$, then $v(j_{z_0}(B)) = 0$.  With this fact it is straightforward to verify that equality holds for a disk less a relatively closed set of measure 0.  Since $v(\Omega) = v(\Omega - \{z_0\})$ and $j_{z_0}(\Omega) = j(\Omega - \{z_0\})$, without loss of generality we may suppose $z_0 = 0$.
\medskip
Notice that for any $r>0$, $v(r\Omega) = r^2v(\Omega)$ and \begin{eqnarray*}
v(\mathbb{C}_{\infty} \setminus j(r\Omega))=v(\mathbb{C}_{\infty} \setminus r^{-1}j(\Omega)) =v(r^{-1}(\mathbb{C}_{\infty} \setminus j(\Omega))) = r^{-2}v(\mathbb{C}_{\infty} \setminus j(\Omega)).
\end{eqnarray*}
Here for any set $B\subset \mathbb C$,  $rB: =\{rz:z\in B\}$. We may further assume that $v(\Omega)=\pi$. So the inequality (\ref{Volume Lemma Inequality}) is equivalent to
\begin{equation*}
   v(j(\mathbb{C}_{\infty} \setminus \Omega)) =  v(\mathbb{C}_{\infty} \setminus j(\Omega))\ge \pi.
\end{equation*}

  Set 
$$
S_1 =   {\mathbb D} \setminus \Omega, \quad S_2 =  \Omega\setminus {\mathbb D}.
$$
So $v(S_1)=v(S_2)$.
Since $\mathbb{C}_{\infty} \setminus \Omega = S_1\sqcup ((\mathbb{C}_{\infty} \setminus {\mathbb D})\setminus S_2)$ and   $j(\mathbb C_\infty\setminus {\mathbb D}) =\overline {\mathbb D}$, 
$$ j(\mathbb{C}_{\infty} \setminus \Omega) =   j(S_1) \sqcup (\overline {\mathbb D}\setminus j(S_2)), $$
where $\sqcup$ denotes the disjoint union. Noticing  $j(S_2)\subset \overline {\mathbb D}$, we further have $$v(j(\mathbb{C}_{\infty} \setminus \Omega)) -\pi  =v(j(S_1)) +v(\overline {\mathbb D})-v(j(S_2))-\pi = v(j(S_1))  -v(j(S_2)).$$  Applying change of variables formula, one gets
\begin{equation}\label{vv}
    \begin{split}
    v(j(\mathbb{C}_{\infty} \setminus \Omega)) -\pi  =\int_{S_1} \frac{1}{|z|^4}  dv(z) - \int_{S_2} \frac{1}{|z|^4}  dv(z)\ge  \int_{S_1} 1  dv(z) - \int_{S_2}1  dv(z) =0.
    \end{split}
\end{equation}
Here we have used the fact that $|z|< 1$ on $S_1$ and $|z|\ge 1$ on $S_2$. This completes the proof of the inequality part. 

If equality holds in (\ref{Volume Lemma Inequality}), then the inequality in (\ref{vv}) becomes equality and 
$$\int_{S_1} \frac{1}{|z|^4}  dv(z) =\int_{S_1} 1  dv(z), \ \  \int_{S_2} \frac{1}{|z|^4}  dv(z) = \int_{S_2}1  dv(z).$$ 
Since $|z| < 1$ on $S_1$, the first equation implies  $v(S_1)=0$. Thus, $v(S_2)=0 $. By definitions of $S_1$ and $S_2$, we know that $\Omega$ is the unit disk centered at $0$ less a relatively closed set of measure zero.

\end{proof}

\begin{proof}[{\bf Proof of Theorem \ref{rigidity of analytic and logarithmic capacity with reciprocal of volume}} (Rigidity theorem of $c_{B}$ and $c_{\beta}$)] If $v(\Omega) = \infty$, then the inequality is trivial.  By \cite[p. 107]{AB50}, $c_B(z_0) = 0$ if and only if $c_B \equiv 0$, and thus if and only if $\Omega = \mathbb{C}_{\infty} \setminus P$ where $P \in \mathcal{N}_B$ by definition. 

We now assume $v(\Omega) < \infty$. Equation \eqref{analytic capacity theorem main eqn} follows from \eqref{analytic capacity theorem main eqn 2} and  \eqref{Volume Lemma Inequality}. If $\Omega = \mathbb{D}(z_0, \sqrt{\pi^{-1}v(\Omega)})\setminus Q$, where
$$
Q \cap \overline{\mathbb{D}(z_0, s)} \in \mathcal{N}_B, \quad \hbox{for all }s < \sqrt{\pi^{-1}v(\Omega)},
$$
then by the classical Schwarz lemma, $c_B^2(z_0) = \pi v(\Omega)^{-1}$. 

Conversely, if equality holds at $z_0\in \Omega$, then  
$$ c_B^2(z_0) = {v(\mathbb{C}_{\infty} \setminus j_{z_0}(\Omega)) \over \pi}  = {\pi \over v(\Omega)}. $$
By the equality part of Lemma  \ref{Volume Lemma}, $\Omega = \mathbb{D}(z_0, \sqrt{\pi^{-1}v(\Omega)}) \setminus P$, where $P$ is a relatively closed set of measure 0. By the equality part of Theorem \ref{jth}, we further conclude that $
P \cap \overline{\mathbb{D}(z_0, s)} \in \mathcal{N}_B$ for all $s < \sqrt{\pi^{-1}v(\Omega)}.$ 
The rest of the theorem follows from the second part of Theorem \ref{jth}.

\end{proof}

\begin{remark} \label{EX}
If $c_{\beta}^2(z_0) > c_B^2(z_0) = \pi v^{-1}(\Omega)$, then $P$ in the preceding theorem may not be polar. 
Indeed, let $Q$ be the compact four-corner Cantor set defined in \cite{GY01}.  As shown therein, $Q \in \mathcal{N}_B,$ but is not polar.  Let $\Omega = \mathbb{D}(z_0, r) \setminus Q$ where $z_0$ and $r$ are chosen such that $z_0 \not\in Q \subset \mathbb{D}(z_0, r)$. Since $Q \in \mathcal{N}_{B}$, all bounded holomorphic functions on $\Omega$ extend across $Q$.  Thus,
$$
c_{B; \Omega}(z_0) = c_{B; \mathbb{D}(z_0, r)}(z_0) = \sqrt{\pi \over v(\mathbb{D}(z_0, r))} = \sqrt{\pi \over v(\Omega)},$$
where the last equality used the fact that sets of class $\mathcal{N}_B$ have two-dimensional Lebesgue measure 0.
 
\end{remark}

\section{Applications of the Rigidity Theorem of $c_{B}$ and $c_{\beta}$ } \label{Section five}

Let $\Omega$ be a domain in $\mathbb C^n, n\ge 1$. The Bergman space of a domain $\Omega$ is the Hilbert space
$$
A^2(\Omega) = L^2(\Omega) \cap \mathcal O(\Omega)
$$
with $L^2(\Omega)$-norm denoted by $\|\cdot\|_\Omega$.   The Bergman kernel is defined by
 $$
K(z) = \sup\{|f(z)|^2: f \in A^2(\Omega), \|f\|_\Omega \leq 1\}, \ \  z\in \Omega.
 $$
By considering constant functions in the defining set of the kernel, we get
 $$
K(z) \geq \frac{1}{v(\Omega)}, \  z\in \Omega,
 $$
which  is sharp when $\Omega = \mathbb{D}(z_0, r) \setminus P$, where $P$ is a relatively closed polar set and $z = z_0$.

\medskip

   As an application  of  Theorem \ref{rigidity of analytic and logarithmic capacity with reciprocal of volume}, we show that this sharp example is
   in fact the only possible domain where the equality can be achieved. Corollary \ref{theorem 1} below, which is the main result of \cite{DT}, was originally proved by the first and second named authors using the equality part of the Suita Conjecture. Here we provide a new    
    proof 
   based on our Theorem \ref{rigidity of analytic and logarithmic capacity with reciprocal of volume} and  Suita's Theorem.

\begin{corollary}[Originally proved in \cite {DT}] \label{theorem 1}
Let $\Omega \subset \mathbb{C}$ be a domain.  Then there exists a $z_0 \in \Omega$ such that
\begin{equation}\label{Minimal point of bergman kernel}
K(z_0) = {1 \over v(\Omega)},
\end{equation}
if and only if either of the following holds true:

\begin{enumerate}

\item [$1$.] $v(\Omega) < \infty$ and $\Omega = \mathbb{D}(z_0, r) \setminus P$, where $P$ is a  relatively closed polar set and with $r^2 =\pi^{-1} v(\Omega)$.

\item [$2$.] $v(\Omega) = \infty$ and $\Omega = \mathbb{C} \setminus P$,  where $P$ is a possibly empty, closed polar set.

\end{enumerate}
\end{corollary}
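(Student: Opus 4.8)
The plan is to deduce the corollary from the inequality chain $\pi K \geq c_\beta^2 \geq c_B^2 \geq \pi v^{-1}(\Omega)$ established in Theorem \ref{rigidity of analytic and logarithmic capacity with reciprocal of volume} (equivalently \eqref{Domain Inequality Chain}), together with the equality characterizations for the two ``endpoints'' of the chain. The key observation is that $K(z_0) = v^{-1}(\Omega)$ forces $\pi K(z_0) = \pi v^{-1}(\Omega)$, which by \eqref{Domain Inequality Chain} collapses the whole chain: $\pi K(z_0) = c_\beta^2(z_0) = c_B^2(z_0) = \pi v^{-1}(\Omega)$. So equality holds simultaneously at $z_0$ between $c_B^2$ (and $c_\beta^2$) and $\pi v^{-1}(\Omega)$, and between $\pi K$ and $c_\beta^2$. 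One then feeds this into the two rigidity statements already proved.

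First I would dispose of the $v(\Omega) = \infty$ case. If $v(\Omega) = \infty$ then $K(z_0) = v^{-1}(\Omega) = 0$, but the Bergman kernel of any domain is strictly positive (constants are in $A^2$ only if $v(\Omega) < \infty$, but more to the point $K(z_0) \geq |f(z_0)|^2$ for suitable $f$, and $K \equiv 0$ is impossible on a domain—actually $K(z_0)=0$ would mean $A^2(\Omega) = \{0\}$). Hence $K(z_0) = 0$ can only happen if $A^2(\Omega)$ is trivial. By a classical fact (e.g.\ Carleson's theorem on removable sets for $A^2$, or the characterization that $A^2(\Omega) = \{0\}$ iff $\mathbb{C}\setminus\Omega$ is polar and $\Omega$ is, up to a polar set, all of $\mathbb{C}$), this is equivalent to $\Omega = \mathbb{C}\setminus P$ with $P$ a closed polar set. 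Conversely, if $\Omega = \mathbb{C}\setminus P$ with $P$ closed polar, then $v(\Omega) = \infty$ (polar sets have measure zero) and $A^2(\Omega) = A^2(\mathbb{C}) = \{0\}$ so $K \equiv 0 = v^{-1}(\Omega)$ by our convention. This settles case 2.

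Now assume $v(\Omega) < \infty$ and $K(z_0) = v^{-1}(\Omega)$. As noted, \eqref{Domain Inequality Chain} gives $c_B^2(z_0) = \pi v^{-1}(\Omega)$ and $c_\beta^2(z_0) = \pi v^{-1}(\Omega)$. Applying the equality part of Theorem \ref{rigidity of analytic and logarithmic capacity with reciprocal of volume}, case 1 (with the additional hypothesis $c_\beta^2(z_0) = \pi v^{-1}(\Omega)$ now in force), we conclude $\Omega = \mathbb{D}(z_0, \sqrt{\pi^{-1}v(\Omega)}) \setminus P$ where $P$ is a relatively closed polar set; in particular $r^2 = \pi^{-1}v(\Omega)$. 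This gives the ``only if'' direction of case 1. For the ``if'' direction, suppose $\Omega = \mathbb{D}(z_0, r)\setminus P$ with $P$ relatively closed polar and $r^2 = \pi^{-1}v(\Omega)$. Since $P$ is polar it has measure zero, so $v(\Omega) = \pi r^2$, consistent. Because $A^2$-functions extend holomorphically across relatively closed polar sets (polar sets are removable for $L^2$ holomorphic functions), $A^2(\Omega) = A^2(\mathbb{D}(z_0,r))$ with equal norms, so $K_\Omega(z_0) = K_{\mathbb{D}(z_0,r)}(z_0) = (\pi r^2)^{-1} = v(\Omega)^{-1}$, using the standard value of the disk's Bergman kernel on the diagonal at the center. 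This completes case 1 and the proof.

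The main obstacle, such as it is, lies not in the logical skeleton—which is a short chase through \eqref{Domain Inequality Chain} and the two already-established rigidity theorems—but in two supporting facts: (i) the precise statement that $A^2(\Omega) = \{0\}$ if and only if $\Omega$ is the complement of a closed polar set in $\mathbb{C}$ (for the $v(\Omega) = \infty$ case), and (ii) the removability of relatively closed polar sets for the Bergman space, so that puncturing $\mathbb{D}(z_0,r)$ by $P$ does not change $K(z_0)$. Both are classical (the latter follows from the fact that polar sets have zero logarithmic—hence zero Lebesgue—capacity and the removable singularity theorem for $L^2$ holomorphic functions, cf.\ \cite{R95}), and I would cite them rather than reprove them. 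I should also double-check that the hypothesis ``$c_\beta^2(z_0) = \pi v^{-1}(\Omega)$'' in Theorem \ref{rigidity of analytic and logarithmic capacity with reciprocal of volume} is genuinely triggered here—it is, precisely because $\pi K \geq c_\beta^2$ and $K(z_0) = v^{-1}(\Omega)$ squeeze $c_\beta^2(z_0)$ down to $\pi v^{-1}(\Omega)$—so that the conclusion delivers a \emph{polar} $P$ rather than merely a set of class $\mathcal{N}_B$.
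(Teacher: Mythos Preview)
Your argument is correct, but it differs from the paper's in one notable way. For $v(\Omega)<\infty$ you invoke the full chain $\pi K \ge c_\beta^2 \ge c_B^2 \ge \pi v^{-1}(\Omega)$ and, having squeezed $c_\beta^2(z_0)$ down to $\pi v^{-1}(\Omega)$, you apply the $c_\beta$-clause of Theorem~\ref{rigidity of analytic and logarithmic capacity with reciprocal of volume} to obtain a polar $P$ directly. This works, but the first inequality $\pi K \ge c_\beta^2$ is the Suita \emph{Conjecture} (B\l{}ocki, Guan--Zhou), not part of Theorem~\ref{rigidity of analytic and logarithmic capacity with reciprocal of volume}. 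The paper deliberately avoids this: it uses only Suita's \emph{Theorem} $\pi K \ge c_B^2$ together with Theorem~\ref{rigidity of analytic and logarithmic capacity with reciprocal of volume} to place $\Omega$ inside a disk minus an $\mathcal N_B$-type set, and then appeals to the equality case of Suita's Theorem (not the Conjecture) to upgrade $P$ to polar. The payoff, as the paper's subsequent remark emphasizes, is that Suita's Theorem has a much more elementary proof than the Suita Conjecture, so the paper's route is logically lighter; your route is shorter once one is willing to take the Suita Conjecture as a black box. For $v(\Omega)=\infty$ both you and the paper defer to the known characterization (the paper cites \cite[Theorem~4]{BlZw}); your sketch of why $K\equiv 0$ forces $\mathbb C\setminus\Omega$ to be polar is fine but could simply be replaced by that citation.
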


\begin{proof}
First assume $v(\Omega) < \infty$.  By Suita's Theorem (Theorem \ref{Suita Theorem}) and Theorem \ref{rigidity of analytic and logarithmic capacity with reciprocal of volume},
\begin{equation*}
\pi K(z_0) \geq c_B^2(z_0)  \geq {\pi \over v(\Omega)} = \pi K(z_0).
\end{equation*}
By the equality part of Theorem \ref{rigidity of analytic and logarithmic capacity with reciprocal of volume},
$$
\Omega = \mathbb{D}(z_0, r) \setminus P, \quad P \cap \mathbb{D}(z_0, s) \in \mathcal{N}_B, \quad s < r = \sqrt{\pi^{-1}v(\Omega)}.
$$ The equality part of Suita's Theorem (Theorem \ref{Suita Theorem}) implies additionally that %a relatively 
$P$ is polar. 
The case when $v(\Omega) = \infty$ is already known (see \cite[Theorem 4]{BlZw}).  

\end{proof} 

\begin{remark} The case $v(\Omega) < \infty$ in the preceding proof used Suita's Theorem and not the Suita Conjecture.  The proof of Suita's Theorem, which was proved using Riemann surface theory, is much simpler than the proof of the Suita Conjecture.  Thus, the proof given here 
is simpler than the original proof in \cite{DT}.

\end{remark}

\begin{remark}
Boas \cite{Bo22} has kindly pointed out to us that Corollary \ref{theorem 1} fails for domains in $\mathbb C^n, n\ge 2$, as indicated by the following examples.  Let $\Omega$ be a domain in $\mathbb C^n, n\ge 2$, satisfying \eqref{Minimal point of bergman kernel} (for instance, a ball centered at $z_0$), and consider images $F(\Omega)$ under maps of the form $F:\mathbb{C}^n \to \mathbb{C}^n$ defined by
\begin{equation}\label{shear}
F(z) = (z^{\prime}, z_n + f_0(z^{\prime})), \quad z^{\prime} = (z_1, \ldots, z_{n-1}),
\end{equation}
where $f_0:\mathbb{C}^{n-1} \to \mathbb{C}$ is any holomorphic function. Such maps are called shears and were considered extensively by Rosay and Rudin \cite{RR88}.  $F$ is biholomorphic,   and the determinant  $\det J_{\mathbb{C}}F$ of  its complex Jacobian is constantly 1.  Hence $F$ is volume preserving with  $v(F(\Omega)) = v(\Omega) $. On the other hand,  by the transformation rule of the Bergman kernel, $K_{F(\Omega)}(F(z_0)) =K_{\Omega}(z_0)$, so
\eqref{Minimal point of bergman kernel} holds for $F(\Omega)$ at $F(z_0)$.  Due to the arbitrariness of  $f_0$, a large degree of freedom is afforded to the geometries of such $F(\Omega)$, in stark contrast to the situation in $\mathbb{C}$.  Moreover, the domains $\Omega$ that satisfy \eqref{Minimal point of bergman kernel} include the bounded complete Reinhardt domains,  bounded complete circular domains,  bounded quasi-circular domains containing the origin, and bounded quasi-Reinhardt domains containing the origin \cite{LR19}, which form a strictly increasing sequence under the set containment relation in $\mathbb{C}^n, n \geq 2$.  By selecting $\Omega$ from one of these classes, we can produce additional domains $F(\Omega)$ satisfying \eqref{Minimal point of bergman kernel} that are not biholomorphically equivalent to the ball. Lastly, for a bounded quasi-Reinhardt $\Omega$ containing the origin, by choosing a non-polynomial mapping $F$  in $\eqref{shear}$  such that $F(\Omega)$ is bounded and F(0) = 0, we may get a minimal domain centered at 0  
that is not quasi-Reinhardt (see \cite{DR16, LR19}).

\end{remark}

   For domains in $\mathbb{C}$, we
  also have the following more precise estimate on the on-diagonal Bergman kernel. Recall that given $z\in \Omega$, $j_{z} = {1 \over \cdot - z}$.

 \begin{corollary} \label{Corollary}
 Let  $\Omega \subset \mathbb{C}$ be a domain with $v(\Omega) < \infty$.  Then for all $z\in \Omega$,
 $$
 K(z) \geq {v(\mathbb{C}_{\infty} \setminus j_{z}(\Omega)) \over \pi^2},
 $$
and   equality holds at some $z_0\in \Omega$ if and only if $\Omega$ is a disk less a relatively closed polar set.
 \end{corollary}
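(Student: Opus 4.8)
The plan is to combine the inequality chain $\pi K \geq c_B^2$ (Suita's Theorem, Theorem \ref{Suita Theorem}) with the sharper lower bound for $c_B^2$ coming from equation \eqref{analytic capacity theorem main eqn 2}. First I would multiply \eqref{analytic capacity theorem main eqn 2} by $\pi^{-1}$ and chain it with $K(z) \geq \pi^{-1}c_B^2(z)$ to obtain, for all $z \in \Omega$,
$$
K(z) \geq \frac{c_B^2(z)}{\pi} \geq \frac{v(\mathbb{C}_{\infty} \setminus j_z(\Omega))}{\pi^2},
$$
which is the asserted inequality. This uses only results already established in the excerpt, so the inequality part is immediate.

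For the equality characterization, suppose equality holds at some $z_0 \in \Omega$. Then both inequalities in the displayed chain must be equalities at $z_0$: namely $\pi K(z_0) = c_B^2(z_0)$ and $c_B^2(z_0) = \pi^{-1} v(\mathbb{C}_{\infty} \setminus j_{z_0}(\Omega))$. From the first equality and the rigidity part of Suita's Theorem, $\Omega$ is biholomorphic to the unit disk less a relatively closed polar set; in particular $\Omega$ is hyperbolic and, since a polar set has measure zero, $v(\Omega) < \infty$. From the second equality and Theorem \ref{jth}, $\Omega = \mathbb{D}(z_1, r) \setminus P$ for some disk, with $P \cap \overline{\mathbb{D}(z_1,s)} \in \mathcal{N}_B$ for all $s < r$. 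The point now is to promote $P$ from class $\mathcal{N}_B$ to polar: this is where I would invoke the ``additionally'' clause of Theorem \ref{jth}, which says that if also $c_{\beta}^2(z_0) = \pi^{-1} v(\mathbb{C}_{\infty} \setminus j_{z_0}(\Omega))$, then $P$ is a relatively closed polar set. So I need to verify $c_\beta^2(z_0) = \pi^{-1} v(\mathbb{C}_{\infty} \setminus j_{z_0}(\Omega))$, equivalently $c_\beta^2(z_0) = c_B^2(z_0)$. This follows by squeezing: $\pi K(z_0) \geq c_\beta^2(z_0) \geq c_B^2(z_0) = \pi K(z_0)$, using \eqref{Inequality of surface quantities}; hence $c_\beta(z_0) = c_B(z_0)$ at $z_0$, and Theorem \ref{jth} then gives that $P$ is relatively closed polar.

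For the converse direction, suppose $\Omega = \mathbb{D}(z_1,r) \setminus P$ with $P$ a relatively closed polar set. Then $v(\Omega) = \pi r^2 < \infty$, and by the equality statement of Corollary \ref{theorem 1} (or directly from the Suita Conjecture / Schwarz lemma) $K(z_1) = v^{-1}(\Omega)$; moreover by Lemma \ref{Volume Lemma} applied at $z_1$, $v(\mathbb{C}_{\infty} \setminus j_{z_1}(\Omega)) = \pi^2 / v(\Omega)$, so equality holds at $z_0 = z_1$. (One should be slightly careful that the center realizing Bergman-minimality is the same point realizing the volume identity; both are $z_1$, the center of the disk, so this is consistent.)

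The main obstacle I anticipate is the bookkeeping in the equality case — specifically making sure the single point $z_0$ at which equality is assumed simultaneously forces equality in \emph{all} the relevant sub-inequalities ($\pi K \geq c_B^2$, $c_\beta^2 \geq c_B^2$, and \eqref{analytic capacity theorem main eqn 2}), and then correctly feeding this into the ``additionally'' hypothesis of Theorem \ref{jth} to upgrade $P$ to a polar set. Everything else is a formal concatenation of results already proved in the excerpt, so I expect the proof to be short.
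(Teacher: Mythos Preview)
Your proposal is correct and follows essentially the same route as the paper: chain Suita's Theorem $\pi K \geq c_B^2$ with the Ahlfors--Beurling inequality \eqref{analytic capacity theorem main eqn 2}, and for the equality case invoke the rigidity in Theorem~\ref{Suita Theorem} and Theorem~\ref{jth}. The paper's proof is terser (it simply cites those two theorems for the equality part), whereas you spell out the squeezing $\pi K(z_0)\ge c_\beta^2(z_0)\ge c_B^2(z_0)=\pi K(z_0)$ to trigger the ``additionally'' clause of Theorem~\ref{jth}; this is exactly the intended mechanism.
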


\begin{proof} By   
Suita's Theorem (Theorem \ref{Suita Theorem}), the Ahlfors-Beurling Inequality \eqref{analytic capacity theorem main eqn 2}, and Theorem \ref{jth},
$$
\pi K(z) \geq c_B^2(z)  \geq {v(\mathbb{C}_{\infty} \setminus j_{z}(\Omega)) \over \pi}
$$
for $z\in \Omega$. The equality part is a consequence of Theorems \ref{Suita Theorem} and \ref{jth}.

\end{proof}

For $j = 1, 2,\ldots$, let
$$
K^{(j)}(z) = \sup\{|f^{(j)}(z)|^2: f \in A^2(\Omega), \|f\|_{\Omega} \leq 1, f^{(k)}(z) = 0, k = 0,\ldots, j-1\}
$$
 denote the Bergman kernels for higher order derivatives, and set $K^{(0)} = K$.  B\l{}ocki and Zwonek \cite{BlZw} established that for $z \in \Omega \subset \mathbb{C}$,
\begin{equation} \label{Higher order Bergman kernel inequality}
K^{(j)}(z) \geq {j!(j+1)! \over \pi}(c_{\beta}(z))^{2j + 2},
\end{equation}
which is sharp for a disk less a relatively closed polar set $\mathbb{D}(z_0, r) \setminus P$ with $z = z_0$.  

\begin{proof}[{\bf Proof of Corollary \ref{Higher Bergman Kernel Corollary}} (Rigidity theorem of higher-order Bergman kernels)]  The case $v(\Omega) = \infty$ is already known, cf. \cite[Theorem 4]{BlZw}. For $v(\Omega) < \infty$, this follows from \eqref{Higher order Bergman kernel inequality} and Theorem \ref{rigidity of analytic and logarithmic capacity with reciprocal of volume}.  

\end{proof}

 One important property of the analytic capacity $c_B $ is that it is distance decreasing with respect to  any given holomorphic map $f:X  \to Y$,
where $X, Y$ are hyperbolic Riemann surfaces, cf. \cite[Chapter 2]{JP}:
\begin{equation} \label{c_B decreasing}
f^*\left(c_{B; Y}(z) |dz| \right) \leq c_{B; X}(z) |dz|,
\end{equation}
where $f^*\left(c_{B; Y}(z) |dz| \right)$ denotes the pull-back to $X$ via $f$ of the analytic capacity on $Y$.

   \begin{remark} 

For a hyperbolic Riemann surface $X$ with a local coordinate $z$, if there exists $z_0 \in X$ and a non-constant holomorphic function $f:X \to \mathbb D$   such that 
\begin{equation} \label{isom}
  K(z_0) |dz|^2 = \frac{|df (z_0)|^2  }{ \pi (1-|f(z_0)|^2)^2},
\end{equation}
 then by \eqref{Inequality of surface quantities} and \eqref{c_B decreasing}, we have
$$
\sqrt{\pi K(z_0)}|dz| \geq   c_{B; X}(z_0) |dz|  \geq  f^*\left(c_{B; \mathbb D}(z_0) |dz| \right)   = {|df(z_0)| \over 1 - |f(z_0)|^2 }.
$$
Without loss of generality, assume that $df (z_0)>0$.
Therefore, \eqref {isom} forces the equality condition in Theorem \ref{Suita Theorem} to hold true,  and there exists a  biholomorphism $h: X \to \mathbb D \setminus P$ such that $h(z_0) = 0$ and $\frac{\partial h}{\partial z} (z_0)>0$, where $P$ is a relatively closed polar subset.  By the transformation rule of the Bergman kernel,  
$\sqrt{\pi K(z_0)}  = \frac{\partial h}{\partial z} (z_0).$ Take 
$
\varphi (z):= \frac{z- f(z_0)}{ 1- \overline {f(z_0)} z} \in \text{Aut} (\mathbb D).
$
Then $ \varphi \circ f \circ h^{-1}$ is bounded, so it extends to a holomorphic function $F:  \mathbb D  \to  \mathbb D  $ 
such that $F (0)=  0$. Moreover, $F^{\prime}(0) 
= \sqrt{\pi K(z_0)}^{-1} \cdot  \sqrt{\pi K(z_0)}   (1-|f(z_0)|^2)  \cdot  (1-|f(z_0)|^2)^{-1} =1$.
 The Schwarz lemma implies that $F(z)  \equiv z$ on $  \mathbb D $, so $f $ is in fact  biholomorphic. Consequently, the identity \eqref{isom} extends to all of $X$, and $f$ is a holomorphic isometry with respect to the Bergman kernel.

 \end{remark}

In the remaining part of the section, we focus on 
a bounded domain $\Omega$ in $\mathbb C$ with Lipschitz boundary. The Hardy space $H^2(\partial \Omega)$ is  defined to be the set of holomorphic functions on $\Omega$ which admit a non-tangential boundary limit function and a maximal function on the boundary which belong to $L^2(\partial \Omega)$. See Appendix \ref{AppendixC} or \cite{L99} for more details.  Let $\|\cdot\|_{\partial\Omega}$ denote the $L^2(\partial \Omega)$ norm and $S(\cdot, \cdot)$ denote the Szeg\"o kernel.  When restricted to the diagonal, it satisfies
 $$
S(z, z) := S(z) = \sup\{|f(z)|^2: f \in H^2(\partial\Omega), \|f\|_{\partial\Omega} \leq 1\}.
 $$
By considering constant functions in the defining set  of the  kernel,  for any $ z \in \Omega$, we get  
$$
 S(z) \geq {1 \over \sigma(\partial \Omega)},
$$
where $\sigma(\partial \Omega)$ denotes the arclength of $\partial\Omega$.  The lower bound is sharp for $\Omega = \mathbb{D}(z_0, r)$ and $z = z_0$. As applications of the Rigidity theorem of $c_B$ and $c_{\beta}$, Theorem \ref{rigidity of analytic and logarithmic capacity with reciprocal of volume}, we will show that these are the only possible domains where the equality can be achieved.

\medskip

Let $\{\Omega_j\}_{j=1}^\infty$ be a family of exhausting subdomains of $\Omega$, and  $S_j(\cdot, \cdot)$ %and $S(\cdot, \cdot)$ 
be the corresponding Szeg\"o kernels. Boas showed in \cite[Theorem 2.1]{Bo87} that if in addition $\Omega$ has $C^{\infty}$-smooth boundary and is exhausted by sublevel sets $\Omega_j$ of its defining function, %(and $\Omega_j$ are sublevel sets of the defining function), 
then  for $a, z \in  \Omega_{j}$,
 $$
\lim_{j \to \infty} S_{j}(z, a) = S(z, a).
 $$
For a bounded domain $\Omega \subset \mathbb{C}$ with Lipschitz boundary, there are subdomains $\{\Omega_j\}_{j=1}^{\infty}$ with $C^{\infty}$ boundary that approximate $\Omega$ well uniformly and non-tangentially in the sense of Ne\v{c}as (see \cite[p. 539]{L99} or Appendix \ref{AppendixC} for more details). In Proposition \ref{Our Szego Lemma}, we extend  Boas' stability result of the Szeg\"o kernel to Lipschitz domains in $\mathbb{C}$ with respect to the Ne\v{c}as approximation.

It is known for a finitely connected domain with $C^{\infty}$ boundary that the analytic capacity and (on-diagonal) Szeg\"{o} kernel satisfy the relation $c_B(z) = 2\pi S(z)$.   For $\{\Omega_j\}_{j=1}^{\infty}$ given above, as shown by Ahlfors and Beurling \cite[Theorem 1]{AB50}, the analytic capacities $c_{j; B}$ and $c_B$ of these domains, respectively, similarly satisfy
\begin{equation*}
\lim_{j \to \infty} c_{j; B}(z) = c_B(z), \quad z \in \Omega.
\end{equation*}
Consequently, we can conclude

\begin{proposition}\label{Relation with analytic capacity and szego kernel}
If $\Omega \subset \mathbb{C}$ is bounded with Lipschitz boundary, then 
\begin{equation}\label{equation relating analytic capacity and Szego}
c_B(z) = 2\pi S(z), \quad z \in \Omega.    
\end{equation}

\end{proposition}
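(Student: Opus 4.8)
The plan is to deduce \eqref{equation relating analytic capacity and Szego} from the known $C^\infty$ case by an approximation argument, exactly as the surrounding text advertises. Concretely, I would fix a bounded Lipschitz domain $\Omega\subset\mathbb{C}$ and choose a sequence $\{\Omega_j\}_{j=1}^\infty$ of subdomains with $C^\infty$ boundary that approximate $\Omega$ uniformly and non-tangentially in the sense of Ne\v{c}as, as guaranteed by the discussion preceding the proposition (cf. \cite[p.~539]{L99} or Appendix \ref{AppendixC}). On each $\Omega_j$ the classical identity $c_{j;B}(z)=2\pi S_j(z)$ holds for all $z\in\Omega_j$ by \cite{Be16,G72}. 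It then suffices to pass to the limit on both sides: the left-hand side converges by the Ahlfors--Beurling stability result \cite[Theorem~1]{AB50}, namely $\lim_{j\to\infty}c_{j;B}(z)=c_B(z)$ for every $z\in\Omega$, and the right-hand side converges by the extension of Boas' stability theorem to Lipschitz domains under the Ne\v{c}as approximation, which is Proposition \ref{Our Szego Lemma}, giving $\lim_{j\to\infty}S_j(z,a)=S(z,a)$ and in particular $\lim_{j\to\infty}S_j(z)=S(z)$ for every $z\in\Omega$. Combining the three displays yields $c_B(z)=\lim_j c_{j;B}(z)=\lim_j 2\pi S_j(z)=2\pi S(z)$ for each $z\in\Omega$, which is the claim.

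A few bookkeeping points need attention. First, I must make sure that for each fixed $z\in\Omega$ the point $z$ lies in $\Omega_j$ for all sufficiently large $j$, so that $S_j(z)$ and $c_{j;B}(z)$ are defined; this follows from the uniform exhaustion property of the Ne\v{c}as approximation (the $\Omega_j$ eventually contain any compact subset of $\Omega$), so the limits above are taken over the cofinal set of indices for which the quantities make sense. Second, one should confirm that the $\Omega_j$ produced by the Ne\v{c}as procedure are genuinely smooth \emph{and} that Proposition \ref{Our Szego Lemma} is stated precisely for this approximation; since that proposition is established earlier (and its proof deferred to Appendix \ref{AppendixC}), I may invoke it directly. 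Third, the classical $C^\infty$ identity $c_{j;B}=2\pi S_j$ requires $\Omega_j$ to be finitely connected, which is automatic here since each $\Omega_j$ is a bounded domain with smooth compact boundary.

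Since every ingredient is already in place by the time the proposition is stated, the proof itself is genuinely short: it is a two-line limiting argument. The main obstacle is not in this proof but in its prerequisites — specifically in establishing Proposition \ref{Our Szego Lemma}, i.e.\ that the Szeg\"o kernels of the smooth approximants converge on the diagonal to the Szeg\"o kernel of the Lipschitz domain. That convergence is delicate because the Hardy space $H^2(\partial\Omega)$ on a merely Lipschitz boundary is defined via non-tangential maximal functions and boundary limits, and one must control the $L^2(\partial\Omega_j)$ geometry (arclength measures, non-tangential approach regions) uniformly as $j\to\infty$; this is precisely what the Ne\v{c}as-type approximation is designed to handle and what Appendix \ref{AppendixC} carries out. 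For the proof of Proposition \ref{Relation with analytic capacity and szego kernel} as stated, however, all of that work is encapsulated in the cited results, and the argument reduces to the clean limiting identity above.
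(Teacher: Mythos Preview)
Your proposal is correct and follows exactly the argument the paper uses: the proposition is stated as a direct consequence of the $C^\infty$ identity $c_{j;B}=2\pi S_j$, the Ahlfors--Beurling stability $\lim_j c_{j;B}(z)=c_B(z)$, and the Szeg\"o stability result Proposition~\ref{Our Szego Lemma}, combined via the Ne\v{c}as approximation. Your additional bookkeeping remarks (eventual containment of $z$, finite connectivity of the smooth approximants) are accurate and simply make explicit what the paper leaves implicit.
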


The Rigidity theorem of $c_B$ and $c_{\beta}$, Theorem \ref{rigidity of analytic and logarithmic capacity with reciprocal of volume}, together with Proposition \ref{Relation with analytic capacity and szego kernel},  enables us to prove the rigidity phenomenon of the Szeg\"o kernel.

\begin{proof}[{\bf Proof of Theorem \ref{NEW}} (Rigidity theorem of the Szeg\"o kernel)]

The equality conditions in Suita's Theorem (Theorem \ref{Suita Theorem}) and in $c_\beta \geq c_B$ (Theorem \ref{main2}) imply that the equality condition in Case 1 holds if and only if $\Omega$ is biholomorphic to a disk less a relatively closed polar set.  Since $\Omega$ is Lipschitz, it is a regular domain for the Dirichlet problem.  Thus, the polar set is empty, cf. \cite{R95}.  

For Case 2, it suffices to prove the `only if' direction. By the Rigidity theorem of $c_B$ and $c_{\beta}$ (Theorem \ref{rigidity of analytic and logarithmic capacity with reciprocal of volume}), $2\pi S(z_0) = \sqrt{\pi v^{-1}(\Omega)}$ if and only if
$$
\Omega = \mathbb{D}(z_0, r) \setminus Q, \quad Q \cap \overline{\mathbb{D}(z_0, s)} \in \mathcal{N}_B, \quad \hbox{for all }s < r = \sqrt{\pi^{-1}v(\Omega)}.
$$
Since $\Omega$ has Lipschitz boundary, its boundary has no singleton connected components.  Thus $Q = \emptyset.$
The equality case $2\pi S(z_0) = 2\pi \sigma^{-1} (\partial \Omega) $ follows now as well. 

By \eqref{equation relating analytic capacity and Szego}, $2\pi S(z_0) = \delta^{-1} (z_0) $ is equivalent to $c_B(z_0) = \delta^{-1}(z_0)$.  Let $f_0$ be an extremal function for $c_B(z_0)$.  By the Schwarz lemma, $f_0|_{\mathbb{D}(z_0, \delta(z_0))} : \mathbb{D}(z_0, \delta(z_0)) \to \mathbb{D}$ is extremal  for $\mathbb{D}(z_0, \delta(z_0))$, and thus equals $e^{i\theta}\delta^{-1}(z_0)(z - z_0)$ for some $\theta \in [0, 2\pi)$.  Since $|f_0| < 1$, $\Omega = \mathbb{D}(z_0, \delta(z_0)).$ 

\end{proof}

Theorem \ref{jth} combined with Proposition \ref{Relation with analytic capacity and szego kernel} also gives another rigidity result concerning the Szeg\"o kernel below.

\begin{corollary}
Suppose $\Omega \subset \mathbb{C}$ is a bounded domain with Lipschitz boundary. Then for $z\in\Omega$,
 $$
 S^2(z) \geq   {v(\mathbb{C}_{\infty} \setminus j_{z}(\Omega)) \over 4\pi^3}.
 $$
 Moreover, equality holds at some $z_0\in \Omega$ if and only if  $\Omega$  is a disk.
\end{corollary}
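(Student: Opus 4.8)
The plan is to convert the desired inequality into the Ahlfors--Beurling inequality via the kernel identity $c_B = 2\pi S$. Since $\Omega$ is bounded with Lipschitz boundary, Proposition \ref{Relation with analytic capacity and szego kernel} gives $c_B(z) = 2\pi S(z)$ for all $z \in \Omega$, hence $c_B^2(z) = 4\pi^2 S^2(z)$. Combining this with \eqref{analytic capacity theorem main eqn 2} (equivalently, with the Ahlfors--Beurling inequality \eqref{Analytic Capacity Lower Bound} applied to $E = \mathbb{C}_{\infty} \setminus j_{z}(\Omega)$ together with Lemma \ref{Equivalence of definitions lemma}) yields
$$
4\pi^2 S^2(z) = c_B^2(z) \geq \frac{v(\mathbb{C}_{\infty} \setminus j_{z}(\Omega))}{\pi},
$$
which is the claimed bound after dividing by $4\pi^2$.

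For the rigidity statement, equality at $z_0$ is equivalent to $c_B^2(z_0) = \pi^{-1} v(\mathbb{C}_{\infty} \setminus j_{z_0}(\Omega))$. Since $\Omega$ is bounded we have $v(\Omega) < \infty$, so Theorem \ref{jth} applies and gives $\Omega = \mathbb{D}(z_1, r) \setminus P$ for some $z_1 \in \mathbb{C}$, $r > 0$, where $P \cap \overline{\mathbb{D}(z_1, s)} \in \mathcal{N}_B$ for every $s < r$. Here I would reuse the geometric observation from the proof of Theorem \ref{NEW}: a set of class $\mathcal{N}_B$ has two-dimensional Lebesgue measure zero, hence empty interior, so near any point of $P$ the complement of $\Omega$ would be a measure-zero set, which is incompatible with $\partial \Omega$ being locally a Lipschitz graph. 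Therefore $P = \emptyset$ and $\Omega = \mathbb{D}(z_1, r)$ is a disk. Conversely, if $\Omega = \mathbb{D}(z_1, r)$, the direct computation carried out in the proof of Theorem \ref{jth} --- namely $c_B(z_0) = r/(r^2 - |z_0 - z_1|^2)$ and $v(\mathbb{C}_{\infty} \setminus j_{z_0}(\mathbb{D}(z_1, r))) = \pi r^2/(r^2 - |z_0 - z_1|^2)^2$ --- shows that equality holds (at every point of the disk, in fact).

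The only genuine work is justifying $P = \emptyset$, i.e.\ that the removable set produced by Theorem \ref{jth} cannot coexist with a Lipschitz boundary; the rest is a one-line reduction through Proposition \ref{Relation with analytic capacity and szego kernel} and the Ahlfors--Beurling inequality. This is the same obstacle already handled in Case 2 of Theorem \ref{NEW}, so I would simply invoke that argument rather than repeat it.
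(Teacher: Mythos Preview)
Your proposal is correct and follows exactly the route the paper indicates: the corollary is stated immediately after noting that ``Theorem \ref{jth} combined with Proposition \ref{Relation with analytic capacity and szego kernel}'' yields it, and your write-up simply spells out that combination together with the Lipschitz-boundary argument already used in Case 2 of Theorem \ref{NEW} to eliminate the exceptional set $P$.
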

\medskip

\begin{remark}
In comparison with \eqref{Lip Domain Inequality Chain} and Theorem \ref{NEW}, concerning the inequality between the on-diagonal Bergman and Szeg\"o kernels and the equality conditions, we would like to point out that when the domain is bounded and simply-connected  with $C^{\infty}$-boundary, 
$$
K(z, a) = 4\pi S^2(z, a), \quad z, a \in \Omega, 
$$
cf. \cite[Theorem 25.1]{Be16}.  
\end{remark}

\begin{remark} \label{decS}

Let $f:\Omega_1 \to \Omega_2$ be a holomorphic map, where $\Omega_1, \Omega_2 \subset \mathbb{C}$ are bounded domains with Lipschitz boundaries. Then, by Proposition \ref{Relation with analytic capacity and szego kernel} and \eqref{c_B decreasing},
the Szeg\"o kernel is decreasing with respect to $f$, namely
$$
|f^{\prime}(z)|S_{\Omega_2}(f(z)) \leq S_{\Omega_1}(z),
$$
and equality holds if $f$ is a biholomorphism.

\end{remark}

 \section{Rigidity of Sublevel Sets of Green's Function} \label{Section 6}

For a fixed $z_0 \in \Omega$, let $G_{z_0}(\cdot) = G(\cdot, z_0)$ and
 $$
 \Omega_t = \{z \in \Omega: G_{z_0}(z) < t\}, \quad t \in (-\infty, 0]
 $$
denote the sublevel sets of the Green's function. The following monotonic property was proved by B\l{}ocki and Zwonek.

\begin{theorem}{\normalfont\cite{BZ15}}\label{BZ1}
Let $\Omega$ be a bounded domain in $\mathbb C$. Then for any $z_0\in \Omega$,
$$f(t): = \frac{\pi e^{2{t}}}{v(\Omega_t)}$$
 is non-increasing in $t\in (-\infty, 0)$.
\end{theorem}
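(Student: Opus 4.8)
The plan is to translate the asserted monotonicity of $f$ into a differential inequality for the volume function $V(t):=v(\Omega_t)$, and then to close that inequality using the isoperimetric inequality together with a flux identity for the Green's function. Since $0<V(t)\le v(\Omega)<\infty$ for $t\in(-\infty,0)$, one has $f(t)=\pi\big/\big(e^{-2t}V(t)\big)$, so it is enough to show that $e^{-2t}V(t)$ is non-decreasing on $(-\infty,0)$; granted that $V$ is locally absolutely continuous, this is equivalent to
\[
V'(t)\ \ge\ 2\,V(t)\qquad\text{for a.e. }t\in(-\infty,0).
\]

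First I would record the structure of the sublevel sets. As $G_{z_0}$ is harmonic on $\Omega\setminus\{z_0\}$, the minimum principle forbids a connected component of $\Omega_t=\{G_{z_0}<t\}$ not containing $z_0$; hence $\Omega_t$ is a bounded connected domain with $z_0\in\Omega_t$, and for $t_1<t_2<0$ the closed shell $\{t_1\le G_{z_0}\le t_2\}$ is a compact subset of $\Omega\setminus\{z_0\}$ on which $G_{z_0}$ is real-analytic. By Sard's theorem the critical values of $G_{z_0}$ form a null set, and for each regular value $t$ the level set $\{G_{z_0}=t\}$ is a finite disjoint union of smooth Jordan curves. Applying the coarea formula on the shell gives
\[
V(t_2)-V(t_1)=\int_{t_1}^{t_2}\!\Big(\int_{\{G_{z_0}=s\}}\frac{d\sigma}{|\nabla G_{z_0}|}\Big)\,ds,
\]
which shows $V$ is absolutely continuous on compact subintervals of $(-\infty,0)$ with $V'(t)=\int_{\{G_{z_0}=t\}}|\nabla G_{z_0}|^{-1}\,d\sigma$ for a.e.\ $t$.

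The decisive input is a flux identity. Writing $G_{z_0}(z)=\log|z-z_0|+(\text{harmonic})$ near $z_0$ and using that $G_{z_0}$ is harmonic on $\Omega_t\setminus\{z_0\}$, the divergence theorem (excising a small disk about $z_0$) yields $\int_{\{G_{z_0}=t\}}|\nabla G_{z_0}|\,d\sigma=2\pi$ for each regular value $t$, since the outward normal derivative of $G_{z_0}$ along $\{G_{z_0}=t\}$ equals $|\nabla G_{z_0}|$. By the Cauchy--Schwarz inequality, with $L(t)$ the total length of $\{G_{z_0}=t\}$,
\[
L(t)^2\ \le\ \Big(\int_{\{G_{z_0}=t\}}|\nabla G_{z_0}|\,d\sigma\Big)\Big(\int_{\{G_{z_0}=t\}}\frac{d\sigma}{|\nabla G_{z_0}|}\Big)\ =\ 2\pi\,V'(t),
\]
while the isoperimetric inequality (applied to $\Omega_t$, or to the Jordan domain enclosed by the outermost boundary curve, which contains $\Omega_t$ and has perimeter at most $L(t)$) gives $4\pi\,V(t)\le L(t)^2$. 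Chaining the two estimates yields $V'(t)\ge 2V(t)$ a.e., and integrating $\big(e^{-2t}V(t)\big)'=e^{-2t}\big(V'(t)-2V(t)\big)\ge0$ completes the proof.

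The part that needs the most care is the regularity bookkeeping: one must invoke the coarea formula to obtain the \emph{absolute continuity} of $V$, so that the pointwise a.e.\ inequality $V'\ge 2V$ genuinely forces monotonicity of $e^{-2t}V$, and one must phrase the flux identity and the isoperimetric estimate only at regular values $t$ — harmless, since by Sard's theorem these exhaust $(-\infty,0)$ up to a null set. A minor secondary point is the isoperimetric step when $\Omega_t$ is multiply connected, which is handled by replacing $\Omega_t$ with the simply connected Jordan domain bounded by its outer boundary component.
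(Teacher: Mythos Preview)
Your proposal is correct and follows essentially the same route as B\l{}ocki--Zwonek's original argument, whose key ingredients the paper reproduces (without assembling them into a standalone proof of Theorem~\ref{BZ1}) in the proof of Theorem~\ref{ll}: the coarea identity $V'(t)=\int_{\partial\Omega_t}|\nabla G_{z_0}|^{-1}\,d\sigma$, the flux identity $\int_{\partial\Omega_t}|\nabla G_{z_0}|\,d\sigma=2\pi$, the Cauchy--Schwarz step, and the isoperimetric inequality. Your additional care with Sard's theorem, absolute continuity of $V$, and the multiply-connected isoperimetric step is appropriate and fills in details the paper leaves implicit.
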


\begin{remark}\label{mon}
Note that  $$\lim_{t\rightarrow 0^-} \frac{e^{2t}}{ v{(\Omega_t)}} = \frac{1}{v(\Omega)}.$$
As $t\rightarrow -\infty$,
$\Omega_t$ is approximable by the set $\{\log \left(c_{\beta}(z_0)|z-z_0|\right)<t \} $ (see \cite{BL, BZ15}). This implies   $$\lim_{t\rightarrow -\infty} \frac{e^{2t}}{ v{(\Omega_t)}} =\lim_{t\rightarrow -\infty} \frac{e^{2t}}{ v{(\{|z-z_0|<  {e^t c_{\beta}^{-1}(z_0) } \})}} =\frac{c_{\beta}^2(z_0)}{\pi}.$$
This gives the inequalities (\ref{inequalities with log capacity and sublevel sets of green's function}) between the logarithmic capacity and the sublevel sets of the Green's function.
\end{remark}

The next theorem discusses the second inequality  in \eqref{inequalities with log capacity and sublevel sets of green's function}, which states that either $f$ in Theoerem \ref{BZ1} is strictly decreasing, or the domain has to be rigid.

\begin{theorem}\label{ll}
Let $\Omega$ be a bounded domain in $\mathbb C$. If there exist $z_0\in \Omega$ and $t_1 < t_2$ in $(-\infty, 0)$ such that  $$ \frac{\pi e^{2{t_1}} }{v{(\Omega_{t_1})}}   = \frac{\pi e^{2{t_2}} }{v(\Omega_{ t_2})},$$ then  $G_{z_0}(z) = \log |z-z_0|/R$ for some  constant $R>0$. Consequently,
$\Omega$ is a disk centered at $z_0$ with radius $R$ possibly less a relatively closed polar  subset.
\end{theorem}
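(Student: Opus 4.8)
The plan is to refine the proof of Theorem~\ref{BZ1} so as to extract the equality case, and then to spread the resulting local rigidity over all of $\Omega$ by unique continuation.

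First I would recall the mechanism behind Theorem~\ref{BZ1}. For almost every $t<0$ --- precisely, for every value regular for $G_{z_0}$, and these form a co-countable set since the harmonic function $G_{z_0}$ has isolated critical points on $\Omega\setminus\{z_0\}$ --- the level set $\{G_{z_0}=t\}$ is a finite disjoint union of smooth Jordan curves, the coarea formula gives $\frac{d}{dt}v(\Omega_t)=\int_{\{G_{z_0}=t\}}|\nabla G_{z_0}|^{-1}\,d\sigma$, and the Gauss--Green formula together with $\Delta G_{z_0}=2\pi\delta_{z_0}$ gives $\int_{\{G_{z_0}=t\}}|\nabla G_{z_0}|\,d\sigma=2\pi$. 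Cauchy--Schwarz then yields $\sigma(\partial\Omega_t)^{2}\le 2\pi\,\frac{d}{dt}v(\Omega_t)$, while the isoperimetric inequality gives $\sigma(\partial\Omega_t)^{2}\ge 4\pi\,v(\Omega_t)$; combining these, $\frac{d}{dt}v(\Omega_t)\ge 2v(\Omega_t)$, which integrates to the monotonicity in Theorem~\ref{BZ1}. Now assume $\pi e^{2t_1}/v(\Omega_{t_1})=\pi e^{2t_2}/v(\Omega_{t_2})$. Since $f(t)=\pi e^{2t}/v(\Omega_t)$ is non-increasing it is constant on $[t_1,t_2]$, so $v(\Omega_t)=ce^{2t}$ and $\frac{d}{dt}v(\Omega_t)=2v(\Omega_t)$ there; hence for a.e.\ $t\in(t_1,t_2)$ \emph{both} displayed inequalities are equalities. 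Fix one such $t_0$ that is in addition a regular value of $G_{z_0}$. Equality in the isoperimetric inequality forces $\Omega_{t_0}$ to be a round disk $\mathbb D(c,r)$ (a disconnected region, or one with a hole, has strictly positive isoperimetric deficit, and a hole would also violate the maximum principle for $G_{z_0}$), so $\Gamma:=\{G_{z_0}=t_0\}=\partial\mathbb D(c,r)$; equality in Cauchy--Schwarz forces $|\nabla G_{z_0}|$ to equal a constant $\lambda>0$ along $\Gamma$.

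The core step is to convert ``$\Gamma$ is a circle and $|\nabla G_{z_0}|\equiv\lambda$ on $\Gamma$'' into ``$c=z_0$ and $G_{z_0}=\log(|\,\cdot\,-z_0|/R)$''. Since $G_{z_0}\equiv t_0$ on $\Gamma$ its gradient is normal to $\Gamma$, and as $G_{z_0}<t_0$ inside $\mathbb D(c,r)$ the gradient points outward, so $\nabla G_{z_0}(\zeta)=\lambda(\zeta-c)/r$ for $\zeta\in\Gamma$. Let $\Phi:=2\,\partial_z G_{z_0}=\partial_x G_{z_0}-i\,\partial_y G_{z_0}$, holomorphic on $\Omega\setminus\{z_0\}$ with a simple pole of residue $1$ at $z_0$; identifying gradients with complex numbers we have $\Phi=\overline{\nabla G_{z_0}}$, so on $\Gamma$, using $\overline{\zeta-c}=r^{2}/(\zeta-c)$, we get $\Phi(\zeta)=\lambda r/(\zeta-c)$. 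On a thin annular neighbourhood $N\subset\Omega\setminus\{z_0\}$ of $\Gamma$ that avoids $c$, the holomorphic function $\Phi(z)-\lambda r/(z-c)$ vanishes on $\Gamma$, hence vanishes on $N$; by unique continuation (equivalently, the identity theorem for $\Phi$) it vanishes on the connected set $\Omega\setminus\{z_0,c\}$, i.e.\ $\Phi(z)=\lambda r/(z-c)$ there. But $\Phi$ has nonzero residue at $z_0$ while $\lambda r/(z-c)$ is regular away from $c$; this forces $c=z_0$, and comparing residues at $z_0$ gives $\lambda r=1$. Integrating $2\,\partial_z G_{z_0}=1/(z-z_0)$ yields $G_{z_0}(z)=\log(|z-z_0|/R)$ on $\Omega$ for some constant $R>0$.

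Finally, $G_{z_0}<0$ on $\Omega$ forces $\Omega\subset\mathbb D(z_0,R)$, and $\Omega$ and $\mathbb D(z_0,R)$ then have the same Green's function with pole at $z_0$; by standard potential theory (cf.\ \cite{R95}) this means the relatively closed set $\mathbb D(z_0,R)\setminus\Omega$ is polar, so $\Omega=\mathbb D(z_0,R)\setminus P$ with $P$ relatively closed and polar, as claimed. The main obstacle I anticipate is the measure-theoretic bookkeeping in the first step --- justifying the coarea and Gauss--Green identities and the equality analysis of the isoperimetric inequality for the sublevel sets $\Omega_t$, whose boundaries need not be smooth everywhere and may meet the (polar, hence negligible) set of irregular boundary points of $\Omega$ --- but this parallels the proof of Theorem~\ref{BZ1} and is essentially known; the genuinely new ingredient is the holomorphic-derivative plus unique-continuation argument that pins the pole to the centre.
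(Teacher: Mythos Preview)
Your proposal is correct and shares with the paper the entire first stage: from $f(t_1)=f(t_2)$ one gets $v(\Omega_t)=ce^{2t}$ on $[t_1,t_2]$, hence for a regular $t_0$ both the Cauchy--Schwarz and the isoperimetric inequalities are saturated, so $\partial\Omega_{t_0}$ is a circle $\partial\mathbb D(c,r)$ on which $|\nabla G_{z_0}|$ is constant. The genuine divergence is in the rigidity step. The paper reads this as Cauchy data $G_{z_0}=t_0$, $\partial_\nu G_{z_0}=\text{const}$ on a circular arc and invokes the unique continuation property for harmonic functions with local Cauchy data (citing \cite{Ta}) to identify $G_{z_0}$ with $\log(|z-a|/r)$; the identification $a=z_0$ then comes from a preliminary normalization of the pole and the diameter, and the final conclusion from Lemma~\ref{main}. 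You instead pass to the holomorphic derivative $\Phi=2\partial_z G_{z_0}$, observe that the Cauchy data force $\Phi=\lambda r/(\zeta-c)$ on the full circle $\Gamma$, and use the identity theorem to get $\Phi(z)\equiv\lambda r/(z-c)$ on $\Omega\setminus\{z_0,c\}$; the location of the pole of $\Phi$ then forces $c=z_0$ and $\lambda r=1$ without any normalization. Your route is more elementary---it replaces the PDE unique-continuation citation by the identity theorem for holomorphic functions and yields the centering $c=z_0$ intrinsically---while the paper's Cauchy-data formulation would adapt more directly to higher-dimensional analogues where no holomorphic gradient is available. The closing step (from $G_{z_0}=\log(|z-z_0|/R)$ to $\Omega=\mathbb D(z_0,R)\setminus P$ with $P$ polar) is the content of Lemma~\ref{main} in the paper, matching your appeal to \cite{R95}.
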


\begin{proof}

By scaling and translating if necessary,  we may assume that $z_0=0$  and the diameter of $\Omega$ is $2$.
By the inequality part of Theorem \ref{BZ1}, $f(t_1)=f(t_2)$ implies that there exists a constant $C$ such that
\begin{equation*}\label{*}
v{(\Omega_t)}= C e^{2t},
\end{equation*}
for $t\in(t_1, t_2)$. Thus
\begin{equation*}\label{con1}
\frac{d}{dt} v{(\Omega_t)} = 2v{(\Omega_{t})}.
\end{equation*}

 Following \cite{BZ15, BlZw}, we see from the Cauchy-Schwarz inequality that for
  almost
  every $t\in (-\infty, 0)$,
\begin{equation}\label{CS}
     \sigma^2(\partial \Omega_t)   \le  \int_{\partial \Omega_t } \frac{1}{|\nabla {G_0}|}d\sigma\int_{\partial \Omega_t } {|\nabla {G_0}|}  d\sigma.
\end{equation}
Note that due to the harmonicity of $G_0$ away from $0$, \begin{equation}\label{int}
    \int_{\partial \Omega_t } {|\nabla {G_0}|}  d\sigma = \int_{\partial \Omega_t } \frac{\partial  {G_0}}{\partial \nu}  d\sigma = 2\pi.
\end{equation}
On the other hand, by the co-area formula  $v(\Omega_t) =\int_{-\infty}^t\int_{\partial \Omega_s} \frac{d\sigma}{|\nabla {G_0}|} ds  $, which further  leads to
\begin{equation}\label{co}
    \int_{\partial \Omega_t } \frac{1}{|\nabla {G_0}|}d\sigma = \frac{d}{dt}v(\Omega_t).
\end{equation}
for almost every $t\in (-\infty, 0)$. Pick up a point $t_0\in (t_1, t_2)$ such that (\ref{CS}-\ref{co}) hold. Then,
$$  \sigma^2(\partial \Omega_{t_0})  \le
4\pi v(\Omega_{t_0}).  $$

According to the classical isoperimetric inequality (see  \cite{De} and the references therein),  $ \Omega_{t_0} $ is equivalent (two sets $E_1$ and $E_2$ are equivalent if and only if $v(E_1\cup E_2 \setminus E_1\cap E_2)=0$ ) to a disk  centered at a point $a\in \Omega$ with radius $re^{t_0}$ for some $r> 0$, and in particular, the involved Cauchy-Schwarz inequality (\ref{CS}) attains the equality. This means  $\frac{1}{|\nabla {G_0}| }$ is necessarily a constant multiple of $|\nabla {G_0}| $, or equivalently, $|\nabla {G_0}|  $ is  a constant on $\partial \Omega_{t_0}$. Combining this with (\ref{int}), one obtains $\frac{\partial  }{\partial \nu} G_0= |\nabla {G_0}| = r^{-1}e^{-t_0}$ on $\partial \Omega_{t_0}$.

Now   $G_0$ is harmonic on $\Omega \setminus \Omega_{t_0}$ and  satisfies the following Cauchy data  $$G_0 = t_0,  \ \ \frac{\partial  G_0}{\partial \nu}=\frac{1}{re^{t_0}}$$ on some smooth piece in $\partial \Omega_{t_0}$ contained in $\partial \mathbb{D}(a, re^{t_0})$. As a consequence of the unique continuation property of harmonic functions for local Cauchy data (see \cite{Ta}), we get $G_0(z) = \log \frac{|z-a|}{r}$ on $\Omega \setminus \Omega_{t_0}$, and further on $\Omega$ by the uniqueness. Since $G_0$ has a pole $0$ and  the diameter of $\Omega$ is $2$, we see that $a=0$ and  $r=1$, with $G_0(z) = \log {|z|}$ on $\Omega$. Then we complete the  proof using Lemma \ref{main}.

 \end{proof}

In particular, the  corollary below follows  from Theorem \ref{ll} directly. Here we  provide an alternative proof adopting the property of the Bergman kernel, instead of using the unique continuation property of harmonic functions as in Theorem \ref{ll}.

\begin{corollary} \label{weak}
Let $\Omega$ be a bounded domain in $\mathbb C$. Then,  there exist $z_0\in \Omega$ and  $t_0\in (-\infty, 0]$ such that
$$
c_{\beta}^2(z_0)  = \frac{\pi e^{2{t_0}} }{v{(\Omega_{ t_0})}},
$$
if and only if $\Omega$ is a disk centered at $z_0$ possibly less a relatively closed polar subset. In particular,
$$
c_{\beta}^2(z_0)  = \frac{\pi}{v(\Omega)}$$ for
some $z_0\in \Omega$
if and only if $\Omega$ is a disk centered at $z_0$ possibly less a relatively closed polar subset.
\end{corollary}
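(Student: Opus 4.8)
The plan is to prove the two implications separately, treating the "if" direction by a one-line computation and the forcing direction by passing to the sublevel set $\Omega_{t_0}$, recognizing it as a disk minus a polar set through the already-established rigidity of the analytic and logarithmic capacities (which sit in the chain $\pi K\ge c_\beta^2\ge c_B^2\ge \pi/v$), and then transferring that description back to $\Omega$. For the "if" direction: if $\Omega=\mathbb{D}(z_0,\rho)\setminus P$ with $P$ relatively closed polar, then, polar sets being removable for the Green's function, $G_\Omega(z,z_0)=\log(|z-z_0|/\rho)$, hence $\Omega_t=\mathbb{D}(z_0,\rho e^{t})\setminus P$, $v(\Omega_t)=\pi\rho^2 e^{2t}$ and $c_\beta(z_0)=1/\rho$, so that $c_\beta^2(z_0)=\pi e^{2t}/v(\Omega_t)$ for every $t\in(-\infty,0]$; the choice $t=0$ gives the "in particular" relation $c_\beta^2(z_0)=\pi/v(\Omega)$.

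For the forcing direction, suppose $c_\beta^2(z_0)=\pi e^{2t_0}/v(\Omega_{t_0})$ for some $t_0\in(-\infty,0]$. First dispose of $t_0=0$, which simultaneously yields the "in particular" assertion: here $\Omega_0=\Omega$ (the Green's function is strictly negative), so the hypothesis reads $c_\beta^2(z_0)=\pi/v(\Omega)$, and combining with the chain $c_\beta^2(z_0)\ge c_B^2(z_0)\ge\pi/v(\Omega)$ forces $c_B^2(z_0)=c_\beta^2(z_0)=\pi/v(\Omega)$. Then case~1 of Theorem~\ref{rigidity of analytic and logarithmic capacity with reciprocal of volume} together with its "additionally" clause gives $\Omega=\mathbb{D}(z_0,\sqrt{\pi^{-1}v(\Omega)})\setminus P$ with $P$ a relatively closed polar set, i.e.\ a disk centered at $z_0$ minus such a set.

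Now assume $t_0<0$. I would pass to $W:=\Omega_{t_0}$, a bounded open set containing $z_0$. Its Green's function with pole $z_0$ is $G_W(\cdot,z_0)=G_\Omega(\cdot,z_0)-t_0$: the right-hand side is harmonic on $W\setminus\{z_0\}$ with the correct logarithmic singularity at $z_0$, and it tends to $0$ along $\partial W$ away from the polar set of irregular boundary points, hence coincides with $G_W$. Consequently $c_{\beta;W}(z_0)=e^{-t_0}c_{\beta;\Omega}(z_0)$, so the hypothesis becomes $c_{\beta;W}^2(z_0)=\pi/v(W)$. Applying the chain $c_{\beta;W}^2(z_0)\ge c_{B;W}^2(z_0)\ge\pi/v(W)$ to the component of $z_0$ first shows that $W$ is connected (its $z_0$-component already has full volume $v(W)$) and then, by Theorem~\ref{rigidity of analytic and logarithmic capacity with reciprocal of volume} as in the previous paragraph, that $W=\mathbb{D}(z_0,R')\setminus P'$ with $P'$ relatively closed polar and $\pi R'^2=v(W)$. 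Removability of polar sets gives $G_W(\cdot,z_0)=\log(|\cdot-z_0|/R')$, whence on $W$
\[
G_\Omega(z,z_0)=\log\frac{|z-z_0|}{R},\qquad R:=R'e^{-t_0},
\]
and in particular $c_{\beta;\Omega}(z_0)=1/R$.

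It remains to upgrade this identity from the open set $W$ to all of $\Omega$, which in Theorem~\ref{ll} is precisely where the unique-continuation theorem for local Cauchy data is spent. Here one only needs the elementary identity theorem: the difference $u:=G_\Omega(\cdot,z_0)-\log(|\cdot-z_0|/R)$ extends harmonically across $z_0$ (the logarithmic poles cancel) to a harmonic function on the connected domain $\Omega$ that vanishes on the nonempty open set $W$, so $u\equiv0$; thus $G_\Omega(z,z_0)=\log(|z-z_0|/R)$ on $\Omega$ (and since $G_\Omega<0$, also $\Omega\subset\mathbb{D}(z_0,R)$), and Lemma~\ref{main} concludes that $\Omega=\mathbb{D}(z_0,R)\setminus(\text{relatively closed polar})$, centered at $z_0$. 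The step I expect to require the most care is the reduction to $W$: verifying that the $z_0$-component exhausts $\Omega_{t_0}$ (needed so that $v(W)$ is the right quantity) and writing $G_W=G_\Omega-t_0$ rigorously in the presence of possibly irregular boundary points — this is standard, since the exceptional set is polar and hence negligible for the Green's function, but it should be spelled out; once $W$ is identified as a disk minus a polar set, everything else is formal.
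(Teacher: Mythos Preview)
Your proof is correct, and the route is genuinely different from the paper's. The paper normalizes to $z_0=0$, $c_\beta(0)=1$, invokes the isoperimetric argument from the proof of Theorem~\ref{ll} to show that $\Omega_t$ is (equivalent to) a disk of radius $e^t$ for a.e.\ $t<t_0$, and for $t$ sufficiently negative (so that $\partial\Omega_t$ is smooth) gets an exact disk $\mathbb D(a_t,e^t)$; it then pins down the center $a_t=0$ by computing the Bergman kernel of that disk and using $\pi K_t(0)=c_t^2(0)$ there, before concluding via harmonicity and Lemma~\ref{main}. You instead reduce to the case $t_0=0$ on $W=\Omega_{t_0}$ via the standard identity $G_{W}=G_\Omega-t_0$ (which the paper also uses), and then feed $c_{\beta;W}^2(z_0)=\pi/v(W)$ directly into Theorem~\ref{rigidity of analytic and logarithmic capacity with reciprocal of volume}, which hands you both the disk \emph{and} the center $z_0$ in one stroke; the extension from $W$ to $\Omega$ needs only the identity principle for harmonic functions, not the Cauchy-data unique continuation of Theorem~\ref{ll}. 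Your approach is shorter and leverages the paper's main rigidity theorem, while the paper's proof is deliberately independent of Section~\ref{Section four} and illustrates a separate mechanism (Bergman kernel) for locating the center. One small remark: your connectedness step for $W$ is fine as written, but it is also automatic, since a component of $\{G_\Omega(\cdot,z_0)<t_0\}$ not containing $z_0$ would carry a bounded harmonic function equal to $t_0$ on its boundary off a polar set, forcing a contradiction by the maximum principle.
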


\begin{proof}  It suffices to prove the necessity. By scaling and translating, we may assume $z_0=0$ and $c_{\beta}(0) =1$. By the isoperimetric inequality as in the first part of the proof of Theorem \ref{ll},  $ \Omega_t $ is equivalent to a disk of radius $e^{t}$ for almost every $t\in (-\infty, t_0)$.
Let  $t^\sharp<t_0$ be a negatively large constant  such that   when  $t<t^\sharp$, $\partial \Omega_t$ is smooth.
Hence $\Omega_t$ is precisely a disk of radius $e^t$ for each such arbitrarily fixed $t$. Denote by $a_t$ the center of $\Omega_t$.

We claim that  $a_t=0$.
To see this, let $K_t$ and $c_t$ stand for the corresponding Bergman kernel and logarithmic capacity of $\Omega_t$, respectively. Then for $z\in \Omega_t$,
$$K_t(z) = \frac{e^{2t}}{\pi(e^{2t}-|z-a_t|^2)^2}.$$
Since the Green's function on $\Omega_t$ with a pole 0 is $G_0(z)-t$,  by definition,
$$c_t(0)=\exp \lim_{z\to {0}} \left\{(G_0(z)-t) - \log |z|\right\} = e^{-t}c(0) = e^{-t}.$$
 Making use of the fact that  $\pi K_t =c_t^2$ on the disk $\Omega_t$,
we further have at $z=0$ that
$$ \frac{e^{2t}}{(e^{2t}-|a_t|^2)^2}= \pi K_t(0)=c_t^2(0)  = e^{-2t}.$$
It immediately tells us that $a_t=0$, so 
\begin{equation}\label{ot}
    \Omega_t =\mathbb D(0, e^t), \ \ \text{for}\ \ t<t^\sharp.
\end{equation}

Lastly  let $\rho(z) := G_0 (z) -\log |z|$ on $\Omega$. Then   $\Omega_t = \{z\in \mathbb C: |z|<\frac{e^t}{e^{\rho(z)}}\},\ t<0$.  Comparing this with (\ref{ot}) for $t<t^\sharp$, we have $\rho|_{\partial \Omega_t} =0 $. Since $\rho$ is harmonic  on $\Omega$,  $\rho\equiv 0 $ on $\Omega_t$.  By the uniqueness again, $\rho\equiv 0$  on $\Omega$, where $G_0 (z)=\log |z|$.
 The proof  is complete in view of  Lemma \ref{main}.

\end{proof}

\begin{proof}[{\bf Proof of Theorem \ref{lc}.} ] The `only if' direction is straightforward. For the `if' direction,  Case 2 follows from Theorem \ref{ll}. Case 1 follows from Corollary \ref{weak}, or alternatively, from Theorem  \ref{ll}. Indeed, if Case 1 holds, then by  \eqref{inequalities with log capacity and sublevel sets of green's function} we have  for all  $t<t_1$,
$$\frac{\pi e^{2{t}} }{v(\Omega_{t})}   = \frac{\pi e^{2{t_1}} }{v(\Omega_{t_1})}. $$
Case 1 is thus reduced to Theorem \ref{ll} and so $\Omega$ is a disk centered at $z_0$ possibly less a relatively closed polar subset. Case 3 can be proved similarly from Theorem \ref{ll}.

\end{proof}

\section{Rigidity Properties of the Distance Function in $\mathbb{C}^n$} \label{Section 7}

 Let $\Omega$ be a  bounded domain  in $\mathbb C^n$ with $C^2$-boundary.  It is known that  there exists a constant $C$ depending only on $n$ such that $$K(z)\le  {C}{ \delta^{-n-1}(z)}$$ for all $z \in   \Omega $;
if in addition  $\Omega$ is pseudoconvex, by \cite{OT, Fu}
there exists a constant $ C_\Omega$ depending only on $\Omega$ such that
$$K(z)\ge  \frac{ C_\Omega}{ \delta^{2}(z)}.$$
In particular,  the  Bergman kernel of $\mathbb B^n(z_0, r)$, the ball in $\mathbb C^n$ centered at $z_0$ with radius $r$,  is given by
\begin{equation}\label{bn}
K_{\mathbb B^n(z_0, r)}(z) = \frac{n!r^{2 }}{\pi^n (r^2-|z-z_0|^2)^{n+1}}.
\end{equation}

Denote by $PSH^{-}(\Omega)$  the space of negative plurisubharmonic functions on $\Omega$. Let $G_z(\cdot)$ be the pluricomplex Green's function of $\Omega$ with pole $z\in \Omega$  given by
 $$
G_z(w) = \sup\{u(w): u \in PSH^{-}(\Omega), \limsup_{\zeta \to z} u(\zeta) - \log|\zeta - z| < \infty\}.
$$ The Azukawa indicatrix  for $z\in \Omega \subset \mathbb C^n$ is defined as
\begin{equation}\label{125}
    I_{\Omega}^A(z): =\{X\in \mathbb C^n: \limsup_{\zeta\rightarrow 0} \left(G_z(z+\zeta X)-\log|\zeta|\right)<0\}.
\end{equation}
 Straightforward calculations show $I_{\mathbb{B}^n(z, r)}^A(z) = \mathbb{B}^n(0, r),$ and when $n=1$,  $I_{\Omega}^A(z) = \mathbb D (0, c_{\beta; \Omega}^{-1}(z) )$.

 \begin{proof} [{\bf Proof of Theorem \ref{delta-c}.} ]

Firstly,  we deal with the relation between  the Bergman kernel and the distance function.
Let $z\in \Omega$ be fixed and consider $\mathbb B^n(z, \delta(z))$. Then $\mathbb B^n(z, \delta(z))\subset \Omega$. By   (\ref{bn}) and the monotonic decreasing property of the Bergman kernels with respect to domains, 
\begin{equation*}
    K(z)\le K_{\mathbb B^n(z, \delta(z))}(z)\le \frac{n!}{\pi^n\delta^{2n} (z)},
\end{equation*}
which yields  \eqref{B}.

Then we prove the rigidity part in \eqref{B} and without loss of generality assume equality is attained at $z_0=0$, namely $K(0) = {n! }{\pi^{-n} \delta^{-2n} (0)} $. Let $f $ be an extremal holomorphic function on $\Omega$ such that $\|f\|_{L^2(\Omega)} =1$ and $K(0) = |f(0)|^2$. Then
the restriction of $f $ to $\Omega_1:=\mathbb B^n(0, \delta(0))$ is also holomorphic with $ \|f\|_{L^2(\Omega_1)}\le \|f\|_{L^2(\Omega)} =1 $.
Therefore, the Bergman kernel on the ball $\Omega_1$ satisfies

$$ \frac{n! }{\pi^n\delta^{2n }(0)} = K_{\Omega_1}(0) \ge \frac{|f(0)|^2}{\|f\|^2_{L^2(\Omega_1)}}\ge  |f(0)|^2 =K(0) = \frac{n! }{\pi^n \delta^{2n}(0)},
$$
which forces both inequalities above to become equalities. In particular,  \begin{equation}\label{123}
\|f\|_{L^2(\Omega)} =1= \|f\|_{L^2(\Omega_1)}.\end{equation}
This implies $\Omega=\Omega_1$, i.e., $\Omega$ is a ball. In fact, if  $\Omega \neq \Omega_1$, then  $f= 0$ almost everywhere on the non-empty open set $\Omega \setminus \overline{\Omega_1}$ (of positive Lebesgue measure) by (\ref{123}).
By the holomorphicity we know that $f\equiv 0$ almost everywhere on $\Omega$, which contradicts the fact that $\|f\|_{L^2(\Omega)} =1$.
\medskip

Secondly,  we deal with the relation between the Azukawa indicatrix and the distance function.
Once again, we may suppose $z = 0$.  Since $\mathbb{B}^n(0, \delta(0)) \subset \Omega$, 
\begin{equation}\label{two}
\mathbb{B}^n(0, \delta(0)) = I^A_{\mathbb{B}^n(0, \delta(0))}(0) \subset I_{\Omega}^A(0),
\end{equation}
which gives the inequality \eqref{A}.

 Assume equality in \eqref{A} holds. Then
\begin{equation}\label{six2}
v(I^A_{\Omega}(0) \setminus \mathbb{B}^n(0, \delta(0))) = 0.
\end{equation} We first claim that for all $X\in \partial \mathbb{B}^n(0, \delta(0))$, \begin{equation}\label{six1}
\limsup_{\lambda \to 0} G_0(\lambda X) - \log |\lambda| \leq 0.
\end{equation}
 Indeed, if $\limsup_{\lambda \to 0} G_0(\lambda X) - \log |\lambda| =\epsilon_0>0 $, then for $t>0$ sufficiently close to $1^-$ (say $t> e^{-\epsilon_0} )$,
 \begin{equation*}
     \begin{split}
         \limsup_{\lambda \to 0} G_0\left(\lambda (tX)\right) - \log |\lambda| &= \limsup_{\lambda \to 0} G_0(\lambda tX) - \log |\lambda t| +\log t \\
         &= \left(\limsup_{\lambda t \to 0} G_0\left((\lambda t\right)X) - \log |\lambda t|\right) +\log t\\
         &=\epsilon_0 +\log t >0.
     \end{split}
 \end{equation*} 
This would mean $tX\notin  I_{\Omega}^A(0)$, which contradicts  (\ref{two}). The claim is proved. The same argument also shows that if $X\in I_{\Omega}^A(0)$, then so is $tX$ for all $0\le t\le 1$.

By  (\ref{six1}),  there is a function $f: \partial \mathbb{B}^n(0, \delta(0)) \to [1, \infty)$ such that
$$
\left( I^A_{\Omega}(0) \cup \partial \mathbb{B}^n(0, \delta(0))\right)\setminus \mathbb{B}^n(0, \delta(0)) = \{r\omega: \omega \in \partial \mathbb{B}^n(0, \delta(0)), 1 \leq r \leq f(\omega)\}.
$$
Then by (\ref{six2}),
$$
0 = \int_{\partial \mathbb{B}^n(0, \delta(0))}\int_1^{f(\omega)} r^{2n - 1}dr d\sigma(\omega).
$$
Thus $f(\omega) = 1$ almost everywhere with respect to $\sigma(\partial \mathbb{B}^n(0, \delta(0)))$.
Thus, for almost every $X \in \partial \mathbb{B}^n(0, \delta(0))$, 

\begin{equation} \label{four}
\limsup_{\lambda \to 0} G_0(\lambda X) - \log |\lambda | = 0.
\end{equation}

For fixed  $X \in \partial \mathbb{B}^n(0, \delta(0))$ satisfying \eqref{four}, consider $u: \mathbb{D}(0, 1) \setminus \{0\} \to \mathbb{R}$ by
$$
u(\lambda) = G_0(\lambda X) - \log |\lambda|.
$$
By \eqref{four}, $u$ extends subharmonically to equal 0 at 0.   On the other hand, by the monotonicity of the pluri-complex Green's functions,
\begin{eqnarray*}
u(\lambda) \leq \log \left | {\lambda X \over \delta(0)} \right | - \log |\lambda | = 0.
\end{eqnarray*}
Thus, $u \in SH^{-}(\mathbb{D}(0, 1) \setminus \{0\})$.   By the maximum principle $u \equiv 0$, or equivalently, 
\begin{equation*} 
G_0(\lambda X) \equiv  \log \left | {\lambda X \over \delta(0)} \right |
\end{equation*}
  for almost every $X \in \partial \mathbb{B}^n(0, \delta(0)))$.  Since $G_0 < 0$ on $\Omega$, there is a (possibly empty) $\sigma(\partial \mathbb{B}^n(0, \delta(0)))$-null set $\mathcal{E}$ such that
$$
\partial \mathbb{B}^n(0, \delta(0)) \setminus \mathcal{E} \subset \partial \Omega.
$$
Since $\partial \Omega$ is closed and $\mathbb{B}^n(0, \delta(0)) \subset \Omega$,
$$
\partial \Omega = \partial \mathbb{B}^n(0, \delta(0)), \quad \Omega = \mathbb{B}^n(0, \delta(0)).
$$

\end{proof}

In the case of pseudoconvex domains, the rigidity result concerning the Azukawa indicatrix and the distance function follows immediately from the multi-dimensional Suita conjecture and the first part of our proof of Theorem \ref{delta-c}. 
 Recently, the first author and Wong \cite{DW, DW2} used curvature properties of the Bergman metric to characterize pseudoconvex domains that are biholomorphic to a ball $\mathbb B^n$ possibly less a relatively closed pluripolar set.

When $n = 1$, since $I^A_{\Omega}(z) = \mathbb{D}(0, c_{\beta; \Omega}^{-1}(z))$, Theorem \ref{delta-c} implies the corollary below.  Here we provide an alternative proof using more elementary methods.

\begin{corollary} \label{new}
Let $\Omega$ be a bounded domain  in $\mathbb C$. Then for any $z\in \Omega$,
\begin{equation} \label{cleq}
\delta ^{-1}(z) \ge  c_{\beta}(z).
\end{equation}
Moreover, equality in \eqref{cleq} holds at some $z_0 \in \Omega$ if and only if $\Omega$ is a disk centered at $z_0$.
\end{corollary}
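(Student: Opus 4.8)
The plan is to establish the inequality \eqref{cleq} directly from an inclusion of disks and then extract rigidity from a Schwarz-lemma / extremal-function argument, entirely in dimension one so as to avoid the Azukawa machinery. First I would fix $z_0 \in \Omega$ and note that $\mathbb{D}(z_0, \delta(z_0)) \subset \Omega$ by definition of the Euclidean distance. Since the logarithmic capacity $c_\beta$ is the conformally invariant metric induced by the Green's function, and the Green's function is monotone decreasing under domain enlargement (more negative on the larger domain, because the defining supremum in \eqref{Other Green's Function} is over a larger competitor class), I would deduce $c_{\beta;\Omega}(z_0) \le c_{\beta;\mathbb{D}(z_0,\delta(z_0))}(z_0)$. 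A direct computation of the Green's function of a disk gives $c_{\beta;\mathbb{D}(z_0,\delta(z_0))}(z_0) = \delta^{-1}(z_0)$, which yields \eqref{cleq}. Alternatively, one can chain together the already-established inequalities: by \eqref{Domain Inequality Chain} we have $c_\beta \ge c_B$, but that points the wrong way; instead I would use the inclusion-monotonicity of $c_\beta$ directly as just described, which is the cleanest route.

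For the rigidity direction, suppose $\delta^{-1}(z_0) = c_{\beta;\Omega}(z_0)$. After translating I may assume $z_0 = 0$, and after scaling I may assume $\delta(0) = 1$, so $c_{\beta;\Omega}(0) = 1$ and $\mathbb{D}(0,1) \subset \Omega$. Let $\rho(z) = G_{\Omega}(z,0) - \log|z|$, which is harmonic on $\Omega$ (the logarithmic singularities cancel) and satisfies $\rho(0) = \log c_{\beta;\Omega}(0) = 0$. On the other hand, domain monotonicity of the Green's function gives $G_\Omega(z,0) \le G_{\mathbb{D}(0,1)}(z,0) = \log|z|$ for $z \in \mathbb{D}(0,1)$, hence $\rho \le 0$ on $\mathbb{D}(0,1)$. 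Since $\rho$ is harmonic on the connected open set $\Omega \supset \mathbb{D}(0,1)$ and attains its maximum value $0$ at the interior point $0$, the maximum principle forces $\rho \equiv 0$ on $\Omega$. Therefore $G_\Omega(z,0) = \log|z|$ on all of $\Omega$; in particular $G_\Omega(z,0) < 0$ forces $|z| < 1$ for all $z \in \Omega$, so $\Omega \subset \mathbb{D}(0,1)$, and combined with $\mathbb{D}(0,1) \subset \Omega$ we get $\Omega = \mathbb{D}(0,1)$. Undoing the normalizations, $\Omega$ is the disk $\mathbb{D}(z_0, \delta(z_0))$ centered at $z_0$.

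The main obstacle — really the only subtle point — is justifying that $\rho$ extends harmonically (not merely subharmonically) across the pole $0$ and that the maximum principle applies cleanly. The extension is standard: near $0$ one writes $G_\Omega(\cdot,0) = \log|z| + (\text{harmonic})$ by the defining property of the Green's function, so $\rho$ is the harmonic part and extends through $0$ with value $\log c_{\beta;\Omega}(0)$. One should also be slightly careful that $\Omega$ being a bounded domain in $\mathbb{C}$ is automatically hyperbolic so the Green's function exists, which is immediate. The converse direction — that equality holds when $\Omega = \mathbb{D}(z_0,\delta(z_0))$ — is the direct disk computation already used in the forward inequality, so no separate work is needed. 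I would also remark that this elementary argument recovers the $n=1$ case of Theorem~\ref{delta-c} without invoking the Azukawa indicatrix or the multi-dimensional Suita conjecture, and that it is morally the same as the argument in the proof of Corollary~\ref{weak}, where the identity $G_0(z) = \log|z|$ was similarly forced via $\rho \equiv 0$.
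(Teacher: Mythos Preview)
Your proof is correct but proceeds along a different line from the paper's. Both arguments rest on the harmonic remainder $\rho(z) = G_{z_0}(z) - \log|z - z_0|$ with $\rho(z_0) = \log c_\beta(z_0)$, but they exploit it differently. The paper applies the mean value property of $\rho$ over circles of radius $R < \delta(z_0)$ to get $\frac{1}{2\pi}\int_0^{2\pi} G_{z_0}(z_0 + Re^{it})\,dt = \log(R\,c_\beta(z_0))$; since $G_{z_0} \le 0$, letting $R \to \delta(z_0)^-$ gives the inequality, and in the equality case the integrand is forced to tend to $0$ pointwise on $\partial\mathbb{D}(z_0,\delta(z_0))$, whence $\partial\Omega = \partial\mathbb{D}(z_0,\delta(z_0))$ by continuity. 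Your route instead uses domain monotonicity of the Green's function to obtain $\rho \le 0$ on $\mathbb{D}(z_0,\delta(z_0))$, and then invokes the strong maximum principle (the interior local maximum $\rho(z_0) = 0$ forces $\rho \equiv 0$ on all of $\Omega$), yielding $G_{z_0}(z) = \log|z - z_0|$ and hence $\Omega = \mathbb{D}(z_0,\delta(z_0))$ directly.

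Your approach is arguably cleaner for the rigidity step: the maximum-principle argument avoids the limiting integral analysis and lands immediately on the global identity $G_{z_0} = \log|\cdot - z_0|$, from which the conclusion is a one-line containment argument (and indeed this also connects transparently to Lemma~\ref{main}). The paper's mean-value route, on the other hand, is self-contained in the sense that it does not appeal to the domain-monotonicity of the Green's function, though that fact is of course standard. Both are elementary and avoid the Azukawa machinery of Theorem~\ref{delta-c}, as you note.
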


\begin{proof}
For any $z\in \Omega$, write the Green's function as $G_z(w) = \log|z-w| + \rho (w)$. Then $\rho$ is harmonic in $\Omega$ and $\rho(z) = \log c_{\beta}(z)$.
For any positive number $R<\delta(z)$, we have
      $$
      \frac{1}{2\pi} \int_0^{2\pi} G_z(z+R e^{it})dt = \frac{1}{2\pi} \int_0^{2\pi} \log R + \rho ( z+R e^{it})dt.
      $$
By the mean value theorem for harmonic functions,  $\frac{1}{2\pi} \int_0^{2\pi}  \rho ( z+R e^{it})dt = \rho( z) = \log c_{\beta}(z)$. Thus
\begin{equation}\label{cg}
      \frac{1}{2\pi} \int_0^{2\pi} G_z( z+R e^{it})dt  =\log (R c_{\beta}(z)).
      \end{equation}
Recall that for $w \in \bar \Omega  \setminus \{z\}$,
$
G_z( w) \le  0.$
This implies from (\ref{cg}) that
$$
  c_{\beta}(z) \le  R^{-1}.
$$
Letting $ R \to \delta(z)^-$, we obtain (\ref{cleq}).

Assume $c_{\beta}(z_0) = \delta^{-1}(z_0)$ at some point $z_0\in \Omega$. Then by (\ref{cg}) and the nonpositivity of $G_{z_0}$, $G_{z_0}( w)\rightarrow 0^-$
for all $w\in \partial \mathbb D(z_0, R)$ as $R\rightarrow \delta(z_0)^-$.  The continuity and nonpositivity of $G_{z_0}$ in $\Omega$ further concludes that this can only happen when $\partial \Omega$ coincides with $\partial \mathbb D(z_0, \delta(z_0))$.

\end{proof}

\begin{remark}\label{Final remark}
When $\Omega$ is a bounded domain in $\mathbb C$, Theorem \ref{delta-c} reduces to  $$\frac{1 }{ \delta^{2}(z)} \ge \pi K(z) $$
for all $z\in \Omega$ and equality holds at some $z_0\in \Omega$ if and only if $\Omega$ is a disk centered at $z_0$. Combining this with  
 the previous inequality chains considered in the paper, we know that
there exists some $z_0\in \Omega$ (and some $t_0 \le 0$) such that $$
 \frac{1}{\delta^2(z_0)}  = %\pi K(z_0),  \quad c_{\beta}^2(z_0),\quad 
 c_B^2(z_0)\quad \text{or} \quad \frac{\pi e^{2{t_0}} }{v{(\{G( \cdot, z_0) < t_0\})}}  \quad \text{or} \quad {v(\mathbb{C}_{\infty} \setminus j_{z_0}(\Omega)) \over \pi}
$$
if and only if $\Omega$ is a disk centered at $z_0$.

\end{remark}

\begin{appendices}   
\appendix
 \bigskip
 \noindent{\LARGE{\bf Appendices}}

\section{Rigidity Phenomenon Between $c_B$ and $c_{\beta}$} \label{AppendixA}

The aim of this appendix is to prove the following rigidity theorem characterizing the equality conditions for $c_B \leq c_{\beta}$.

\begin{theorem}
\label{main2}

For an open hyperbolic Riemann surface $X$, $c_{\beta}(z_0)=c_B(z_0)$ for some $z_0 \in X$
if and only if  
$X$ is biholomorphic to the unit disk $\mathbb D$ possibly less a relatively closed polar subset $P$;
in this case, 
the extremal functions of the analytic capacity $c_B(z_0)$ equal $ \varphi \circ f $, where $f: X \to \mathbb D \setminus P$ is a  biholomorphism and  $\varphi \in \text{Aut}(\mathbb D)$ such that  $ \varphi \circ f(z_0) =0$.

\end{theorem}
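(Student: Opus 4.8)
The plan is to route the whole proof through an extremal function $f_0$ for the analytic capacity $c_B(z_0)$ and to show that equality $c_\beta(z_0)=c_B(z_0)$ forces $|f_0|=e^{G_X(\cdot,z_0)}$; a rigidity lemma for the Green's function (Lemma~\ref{main}) then identifies $X$. The easy direction comes first: if $h\colon X\to\mathbb D\setminus P$ is a biholomorphism with $P$ relatively closed and polar, then, since polar sets are removable both for the Green's function and for bounded holomorphic functions (cf.\ \cite{R95}), one has $G_{\mathbb D\setminus P}(\cdot,0)=\log|\cdot|$ and every member of $\mathrm{Hol}(\mathbb D\setminus P,\mathbb D)$ extends to $\mathbb D$; hence $c_{\beta;\mathbb D\setminus P}(0)=1=c_{B;\mathbb D\setminus P}(0)$ by the Schwarz lemma, and by conformal invariance equality holds at $z_0=h^{-1}(0)$.

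For the converse, fix a local coordinate $w$ at $z_0$ with $w(z_0)=0$ and let $f_0$ be extremal for $c_B(z_0)$, normalized so $\partial f_0/\partial w(z_0)=c_B(z_0)>0$. I would first note that $u:=\log|f_0|$ is negative and subharmonic with $u(z)=\log c_B(z_0)+\log|w(z)|+o(1)$ near $z_0$, hence admissible in the supremum defining $G_X(\cdot,z_0)$, so $u\le G_X(\cdot,z_0)$. Then $\Phi:=G_X(\cdot,z_0)-u\ge 0$ is superharmonic off the zeros of $f_0$ and tends to $\log\bigl(c_\beta(z_0)/c_B(z_0)\bigr)=0$ at $z_0$; so, extended by $0$ at $z_0$, it is a nonnegative superharmonic function on $X$ vanishing at $z_0$, and the minimum principle gives $\Phi\equiv 0$. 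Thus $f_0$ has no zero other than $z_0$ and $\log|f_0|=G_X(\cdot,z_0)$ on all of $X$.

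Next I would prove $f_0$ is injective. Suppose $f_0(a)=f_0(b)=c$ with $a\ne b$; then $c\ne 0$ (else $a=b=z_0$), and $g:=(f_0-c)/(1-\bar c f_0)\in\mathrm{Hol}(X,\mathbb D)$ vanishes at $a$ and $b$, whence $\log|g|\le G_X(\cdot,a)$. Evaluating at $z_0$ and using symmetry of the Green's function together with $\log|f_0|=G_X(\cdot,z_0)$ gives $\log|g(z_0)|=\log|{-c}|=\log|c|=G_X(a,z_0)=G_X(z_0,a)$, i.e.\ equality at the interior point $z_0\ne a$. Applying the minimum principle to the nonnegative superharmonic function $G_X(\cdot,a)-\log|g|$ on the connected surface $X\setminus\{a\}$ forces it to vanish identically there, which is impossible since it equals $+\infty$ at $b$. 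Hence $f_0$ embeds $X$ onto a domain $\Omega\subset\mathbb D$ with $0\in\Omega$ and, by conformal invariance of the Green's function, $G_\Omega(\zeta,0)=\log|\zeta|$ on $\Omega$. Lemma~\ref{main} then yields $\Omega=\mathbb D\setminus P$ with $P$ relatively closed and polar, so $f:=f_0\colon X\to\mathbb D\setminus P$ is the desired biholomorphism. Finally, the computation of $\Phi$ applies verbatim to any extremal $g_0$ for $c_B(z_0)$, giving $\log|g_0|=G_X(\cdot,z_0)=\log|f_0|$; hence $g_0/f_0$ is holomorphic, nonvanishing and unimodular, so $g_0=\varphi\circ f_0$ for a rotation $\varphi$, which satisfies $\varphi\circ f_0(z_0)=0$. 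Conversely, for any biholomorphism $f\colon X\to\mathbb D\setminus P'$ and any $\varphi\in\text{Aut}(\mathbb D)$ with $\varphi\circ f(z_0)=0$, the map $\varphi\circ f$ lies in $\mathrm{Hol}(X,\mathbb D)$ and, using $c_{B;\mathbb D\setminus P'}=c_{B;\mathbb D}$ and the transformation rule, $|(\varphi\circ f)'(z_0)|=c_B(z_0)$, so it is extremal.

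I expect the main obstacle to be the passage from the identity $\log|f_0|=G_X(\cdot,z_0)$ to the geometric conclusion: the injectivity of $f_0$ (handled above via the minimum principle) and, above all, the appeal to Lemma~\ref{main} to rule out a non-polar complement, which is where the genuine potential theory — regularity of boundary points and Kellogg's theorem — is used. A secondary technical point to be careful about is that the comparison $\log|g|\le G_X(\cdot,a)$ must be justified at every step from the definition of $G_X$, and that all applications of the minimum principle are on connected sets containing $z_0$ as an interior point.
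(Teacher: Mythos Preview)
Your proof is correct and follows essentially the same route as the paper's: show $\log|f_0|=G_X(\cdot,z_0)$ by applying the maximum/minimum principle to their difference, then invoke Lemma~\ref{main}. The only notable difference is that your separate injectivity argument for $f_0$ (via the M\"obius shift $g=(f_0-c)/(1-\bar c f_0)$ and the minimum principle) is redundant: Lemma~\ref{main} itself already asserts that any holomorphic $f$ with $G_X(\cdot,z_0)=\log|f|$ is a biholomorphism onto $\mathbb D\setminus P$, and the paper simply applies it directly to $f_0$ without this detour. Your description of the extremals via the unimodular holomorphic quotient $g_0/f_0$ is a minor variant of the paper's direct verification that every $\varphi\circ f$ with $\varphi\circ f(z_0)=0$ realizes $c_B(z_0)$.
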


Given two Riemann surfaces $X$ and $Y$ with $X$ admitting a Green's function,  Minda in \cite[Theorem 3]{Min} proved that  if $f:X \to Y$ is holomorphic, then 
$$
f^*(c_{\beta; Y}(w)|dw|) \leq c_{\beta; X}(\zeta)|d\zeta|.
$$ 
Moreover, if equality holds at a single point, then $f$ is biholomorphic onto its image and $Y\setminus f(X)$ is a closed polar set. Although Minda's approach does not mention the analytic capacity, it can be used to deduce Theorem \ref{main2} as follows.  For any $f\in \hbox{Hol}(X, \mathbb D)$, 
$$
   |f'(z)||dz|\leq {|f^{\prime}(z)| \over 1 - |f(z)|^2} |dz|  = f^{*}(c_{\beta; \mathbb{D}}(z))|dz|\leq c_{\beta}(z)|dz|.
$$

If we suppose $c_{\beta}(z_0) = c_B(z_0)$ and set $f = f_0$ to be an extremal function for $c_{B}(z_0)$, then all inequalities above become equalities at $z_0$. By Minda's result, $f_0$ is a biholomorphism onto $\mathbb{D} \setminus P$ where $P$ is a relatively closed polar set.  Thus, Theorem \ref{main2} is proved. 
 The key ingredient of Minda's proof is the ``sharp form of the Lindel\"of principle'' for the Green's function. In the following, we shall reprove  Theorem \ref{main2} without resorting to this principle. 
 
  \medskip 
  
 Let $f$ be a holomorphic function on an open Riemann surface $X$ such that $f(X) \subset \mathbb{C}$ admits a Green's function. Then the Green's function satisfies a {\bf subordination property}:
\begin{equation} \label{subordination}
G_{X}(z, z^{\prime}) \geq G_{f(X)}(f(z), f(z^{\prime}))
\end{equation}
for all $(z, z^{\prime})\in X\times X$. Moreover, if there exists  some $(z_0, z_0^{\prime})\in X\times X$ with $ z_0\ne z_0'$ such that equality in (\ref{subordination}) holds, then
\begin{equation} \label{subordination1}
G_{X}(z, z^{\prime}) \equiv G_{f(X)}(f(z), f(z^{\prime}))
\end{equation}
for all $(z, z^{\prime})\in X\times X$ and $f$ is injective. 
See for instance \cite[Theorem 4.4.4]{R95} for the proof for  planar domains. The cases for Riemann surface can be proved similarly. A consequence of the property is the following rigidity property Lemma \ref{main} of the Green's function. We note that strengthened forms of the subordination property and Lemma \ref{main}  were proved in \cite[Theorem 1]{Min} by Minda using the Lindel\"of principle for the Green's function.

\begin{lemma}[Rigidity lemma of the Green's function] \label{main}

On an open Riemann surface $X$, the Green's function with a pole $z_0 \in X$ is
 $$
G(z, z_0)= \log|f(z)|, \quad z\in X
 $$
for some holomorphic function $f$ on $X$ if and only if $f$ is a biholomorphism from $X$ to the unit disk possibly less a relatively closed polar subset such that $f(z_0) =0$.

\end{lemma}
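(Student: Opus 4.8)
The plan is to prove Lemma \ref{main} by reducing it to the subordination property \eqref{subordination} together with the standard behavior of the Green's function. I would first dispense with the easy direction. If $f:X\to\mathbb D\setminus P$ is a biholomorphism with $f(z_0)=0$ and $P$ a relatively closed polar subset, then the Green's function of $\mathbb D\setminus P$ with pole at $0$ coincides with that of $\mathbb D$ (removing a relatively closed polar set does not change the Green's function, cf. \cite[Section 5.2]{R95}), which is $\log|w|$; pulling back by the biholomorphism $f$ and using the conformal invariance of the Green's function gives $G_X(z,z_0)=\log|f(z)|$.

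For the nontrivial direction, suppose $G(z,z_0)=\log|f(z)|$ for some holomorphic $f$ on $X$. First I would observe that $f$ maps $X$ into $\mathbb D$: since $G(\cdot,z_0)<0$ on $X$, we have $|f(z)|<1$ everywhere. Next, $\log|f|$ being harmonic away from $z_0$ and having the correct logarithmic singularity $\log|w(z)|+O(1)$ at $z_0$ forces $f$ to have a simple zero at $z_0$ and no other zeros on $X$ (any other zero of $f$ would be a $-\infty$ singularity of $\log|f|$, contradicting harmonicity of $G(\cdot,z_0)$ away from $z_0$), so in particular $f(z_0)=0$ and $f$ is not identically zero. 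Then I would apply the subordination property: the image $f(X)\subset\mathbb D$ is a planar domain admitting a Green's function, and \eqref{subordination} gives $G_X(z,z')\ge G_{f(X)}(f(z),f(z'))$. To invoke the rigidity part, I need equality at one off-diagonal pair of points. The natural candidate is to compare $G_X(\cdot,z_0)=\log|f(\cdot)|$ with $G_{f(X)}(\cdot,0)$; since $f(X)\subseteq\mathbb D$, we have $G_{f(X)}(w,0)\le G_{\mathbb D}(w,0)=\log|w|$, so $G_{f(X)}(f(z),0)\le\log|f(z)|=G_X(z,z_0)$, consistent with subordination, but I want the reverse inequality to pin down equality. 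Here the key point is that $\log|f(z)|$, as a function of $z$, is a negative harmonic (off the pole) function on $X$ with the right singularity, hence is $\le G_X(z,z_0)$ by the defining extremal property \eqref{Other Green's Function} — but it equals $G_X(z,z_0)$ by hypothesis, so this gives nothing new. Instead, the cleanest route: $G_{f(X)}(w,0)$ as a function of $w$ is the largest negative subharmonic function on $f(X)$ with a $\log|w|$ singularity at $0$; composing with $f$ yields a negative subharmonic function on $X$ with a $\log|w(z)|$ singularity at $z_0$, so $G_{f(X)}(f(z),0)\le G_X(z,z_0)=\log|f(z)|$. Combined with $G_{f(X)}(f(z),0)\ge G_{\mathbb D}(f(z),0)=\log|f(z)|$ — wait, this inequality goes the wrong way since $f(X)\subseteq\mathbb D$ makes $G_{f(X)}\le G_{\mathbb D}$. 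Let me instead use: $w\mapsto\log|w|$ restricted to $f(X)$ is negative subharmonic with a $\log|w|$ pole at $0$, hence $\log|w|\le G_{f(X)}(w,0)$; evaluating at $w=f(z)$ gives $\log|f(z)|\le G_{f(X)}(f(z),0)\le G_X(z,z_0)=\log|f(z)|$, forcing equality throughout. In particular $G_X(z,z_0)=G_{f(X)}(f(z),0)$ for all $z$, so by the equality clause of the subordination property, $f$ is injective and $G_X(z,z')\equiv G_{f(X)}(f(z),f(z'))$; moreover $G_{f(X)}(w,0)\equiv\log|w|$ on $f(X)$ means (again by the same extremality comparison between $f(X)$ and $\mathbb D$) that $f(X)$ differs from a sub-domain of $\mathbb D$ with the full disk Green's function.

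To finish, I would identify the structure of $\Omega:=f(X)\subseteq\mathbb D$: we have shown $G_\Omega(\cdot,0)=\log|\cdot|=G_{\mathbb D}(\cdot,0)$. The set $\mathbb D\setminus\Omega$ is relatively closed in $\mathbb D$ and I claim it is polar. Indeed, if it had positive capacity, then by standard potential theory the Green's function of $\Omega$ would be strictly larger than $\log|w|$ at points near $\mathbb D\setminus\Omega$ (equivalently, the harmonic measure / regularized Green's function comparison shows $G_\Omega(\cdot,0)>G_{\mathbb D}(\cdot,0)$ somewhere unless the removed set is negligible), cf. \cite[Theorem 4.4.4 and Section 5.2]{R95}. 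Hence $P:=\mathbb D\setminus\Omega$ is a relatively closed polar subset, $f:X\to\mathbb D\setminus P$ is an injective holomorphic map, and it is onto $\mathbb D\setminus P$ by construction, hence a biholomorphism with $f(z_0)=0$.

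I expect the main obstacle to be the final step: cleanly extracting that $\mathbb D\setminus\Omega$ is polar from the identity $G_\Omega(\cdot,0)=G_{\mathbb D}(\cdot,0)$, and making sure the argument is genuinely self-contained (the stated goal of this appendix is to avoid the Lindel\"of-principle machinery of Minda). The right tool is the characterization that a relatively closed subset $P$ of a domain is polar if and only if removing it does not change the Green's function (with a fixed pole off $P$), which is classical \cite[Section 5.2]{R95}; I would cite this rather than reprove it. A secondary subtlety is making the singularity/zero-set bookkeeping for $f$ at $z_0$ rigorous in the Riemann-surface setting, but this is routine since everything is local and reduces to the planar case via the coordinate $w$.
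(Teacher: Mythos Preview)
Your proof is correct and follows the same route as the paper: both obtain the chain $\log|f(z)|=G_X(z,z_0)\geq G_{f(X)}(f(z),0)\geq\log|f(z)|$ (yours via the extremal definition of $G_{f(X)}$, the paper's via subordination applied to the inclusion $f(X)\hookrightarrow\mathbb D$), and then use the equality clause of \eqref{subordination} to conclude that $f$ is injective and $G_{f(X)}(w,0)\equiv\log|w|$. Incidentally, your mid-stream self-correction is itself mistaken: $f(X)\subseteq\mathbb D$ \emph{does} give $G_{f(X)}\geq G_{\mathbb D}$ (smaller domain, larger Green's function), so your first attempt was already right before you talked yourself out of it.

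On the final step you flag as the main obstacle: the paper does not cite the polar-removal characterization as a black box. Instead it observes that every $\eta\in\partial f(X)\cap\mathbb D$ is an irregular boundary point (since $\lim_{\zeta\to\eta}G_{f(X)}(\zeta,0)=\log|\eta|<0$), so $\partial f(X)\cap\mathbb D$ is polar by Kellogg's theorem; it then rules out interior points of $\mathbb D\setminus f(X)$ by a short harmonic barrier construction. You should write this out, since the converse implication you want (Green's function unchanged $\Rightarrow$ removed set polar) is not stated as an if-and-only-if in \cite[Section 5.2]{R95}, though it follows easily from Kellogg as above.
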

\begin{proof}
Since $G_X(z, z_0) < 0$ for $z \in X$, $f(X) \subset  \mathbb D$. The image $f(X)$ admits a Green's function because $f(X)$ is bounded.  Also, observe that $f(z_0)=0$.
Applying the subordination property \eqref{subordination} to $f$ and the identity map,  we get 
$$
\log|f(z)| = G_{X}(z, z_0) \geq G_{f(X)}(f(z), 0) \geq  G_{\mathbb D}(f(z), 0) = \log |f(z)|.
$$
By the subordination property \eqref{subordination1}, $f$ is injective and $G_{f(X)}(\zeta, 0) = \log |\zeta|$ for $\zeta\in f(X)$.  Let $\eta \in \partial f(X) \cap \mathbb D$ and $\zeta_n(  \in f(X) ) \to \eta$. Since
$$
\lim_{n \to \infty}G_{f(X)}(\zeta_n, 0)= \lim_{n \to \infty} \log  |\zeta_n|= \log |\eta|  < 0,
$$
$\eta$ is an irregular boundary point.  By Kellogg's Theorem, cf. \cite[Theorem 4.2.5]{R95}, $P = \partial f(X) \cap \mathbb D$ is a relatively closed polar set  in $\mathbb D$.  Suppose $w_0 \in \mathbb{D} \setminus \overline{f(X)}$.  Then for some $\epsilon > 0$, $f(X) \subset \mathbb{D} \setminus \overline{\mathbb{D}(w_0, \epsilon)}$.  Let $k$ be the harmonic function defined on the latter set with Dirichlet boundary data
\[
k(z) =
\begin{cases}
      0, & z \in \partial \mathbb{D} \\
      -\log (|w_0| + \epsilon), & z \in \partial\mathbb{D}(w_0, \epsilon).
   \end{cases}
\]
Since $G_{f(X)}(z, 0) \leq G_{f(X)}(z, 0) + k(z)$ and $G_{f(X)}(z, 0) + k(z)$ is in the defining set of \eqref{Other Green's Function} for the domain $f(X)$, we have arrived at a contradiction unless $\mathbb{D} \setminus \overline{f(X)}$ is empty. Thus, $\mathbb{D} = f(X) \sqcup P,$ where $\sqcup$ denotes the disjoint union. The proof of the theorem is complete.

\end{proof}

In \cite[Theorem 3.1, p. 1196]{GZ3} of the equality part of the Suita conjecture, Guan and Zhou showed that if $\pi K(z_0) = c_{\beta}^2(z_0)$, then by their optimal $L^2$ extension theorem, $\exp G(z, z_0) = |g(z)|$, for
some holomorphic function $g$ on $\Omega$. 
By proving further  $c_{\beta}^2(z_0) = c_B^2(z_0)$ if $\exp G(z, z_0) = |g(z)|$, they were able to apply  Suita's Theorem (Theorem \ref{Suita Theorem}) to yield that $X$ is biholomorphic to a disk less a relatively closed polar set.

In fact, 
Lemma \ref{main} says that $|g(z)| = \exp G(z, z_0)$ if and only if
the surface is biholomorphic to a disk less a relatively closed polar set. Consequently,  one does not need to involve the analytic capacity or Suita's Theorem   
to prove the equality part of the Suita conjecture as in \cite{GZ3}. See also \cite{D} for related work.  

\medskip

Next,  by relying more explicitly on the analytic capacity, we shall give a proof of Theorem \ref{main2} by making use of Lemma \ref{main} and following an idea of Guan and Zhou in \cite[Lemma 4.25]{GZ3}.

\begin{proof} [{\bf Proof of Theorem \ref{main2}.} ] 
If $X$ is biholomorphic to a disk less a relatively closed polar set, then $c_{\beta} \equiv c_B$, since the polar part is negligible.

\medskip

Conversely, write $G_{z_0}(z)$ for $G(z, z_0)$, and let $u(z): = \log (|f_0(z)|)$ for $z\in X $, where $f_0$ is an extremal function of the analytic capacity at $z_0$. Since   $|f_0|< 1$ on $X $, $u\in SH^{-}( X)$. By definition of the Green's function, we further see that $u-G_{z_0}\in SH^{-}( X)$.
We will show that  $u-G_{z_0}|_{z=z_0} = 0$. In the local coordinate $w(z)$ of $X$ near $z_0$,   $\lim_{z\rightarrow z_0} \log |f_0(z)|-\log |w(z)|=\log |\frac{df_0}{dw}(z_0)| = \log c_B(z_0)$ by definition of $f_0$. Thus for $z\in X$ near $z_0$,
$$
        u(z)-G_{z_0}(z) = (\log (|f_0(z)|) - \log |w(z)|) - (G_{z_0}(z)- \log |w(z)|) \rightarrow  \log c_B(z_0) - \log c_{\beta}(z_0) = 0.
 $$
As a consequence of  the maximum principle of subharmonic functions, we have
$$
G_{z_0}(z)= \log (|f_0(z)|).
$$
By Lemma \ref{main}, $f_0$ is a biholomorphism from $X$ to the unit disk less a relatively closed polar set, with $f_0(z_0)=0$.

\medskip

To complete the second part of the theorem, 
let $f: X \to \mathbb D \setminus P$ be any biholomorphism, and let
$
\varphi (z):= \frac{z- f(z_0)}{ 1- \overline {f(z_0)} z} \in \text{Aut} (\mathbb D).
$
 Then, $F:= \varphi \circ f: X \to \mathbb D \setminus \varphi (P) $ is also a biholomorphism such that  $F(z_0) =0$, and $G_{z_0} =  \log |F|$ by Lemma \ref{main}.  We will show that $F$ is an extremal function for $c_B(z_0)$. For any $h\in \hbox{Hol}(X, \mathbb D) $ with $h(z_0) =0$ and $h^{\prime}(z_0) \neq 0$, we have that $\log |h|\in SH^- (X)$. By  the definition of the Green's function, 
  $\log |h|\le G_{z_0} = \log |F|$ and so $|h'(z_0)|\le |F'(z_0)|$. Therefore,
$$
        c_B(z_0) = |F'(z_0)|,
$$
which means $F$ is extremal.

\end{proof}

It is known (see \cite[VII.5H]{SO}) that  if $X$ is a regular region of connectivity greater than or equal to 2, then
\begin{equation} \label{>}
c_{\beta}(z) > c_B(z), \quad \hbox{for all } z\in X.
\end{equation}
Theorem \ref{main2} says that \eqref{>} in fact holds for any hyperbolic Riemann surface $X$ which is not biholomorphic to a disk possibly less a relatively closed polar subset.

\section{Stability of the Szeg\"o Kernel on Lipschitz Domains}\label{AppendixC}

The Szeg\"o projection and its kernel on planar domains with Lipschitz boundaries  were studied extensively by Lanzani \cite{L99}. One of the key steps in extending results 
from smoothly bounded domains to those with Lipschitz boundaries is by approximating the bounded Lipschitz domain $\Omega$ by smoothly bounded subdomains in the sense of Ne\v{c}as (see \cite[Theorem 2.3]{L99}).

\begin{theorem}{\normalfont\cite{L99}}\label{approximation in the sense of Necas}
Let $\Omega \subset \mathbb{C}$ be a bounded domain with Lipschitz boundary.  Then there exists a sequence of subdomains $\{\Omega_j\}_{j=1}^{\infty}$ with $C^{\infty}$ smooth boundaries such that
\begin{enumerate}
    \item [$1.$] There is a sequence of Lipschitz homeomorphisms $\Lambda_j: \partial \Omega \to \partial \Omega_j$ such that $\Lambda_j(P) \in \Gamma(P)$ (see Definition \ref{Approach})  and $\lim_{j\to \infty} \Lambda_j(P) = P$.
    \item [$2.$] There exists functions $\omega_j: \partial \Omega \to (0, \infty)$ that are uniformly bounded away from 0 and $\infty$, converge to 1 almost everywhere and 
  $$
    \int_{\partial \Omega} h(\Lambda_j(w))\omega_j(w) d\sigma(w) = \int_{\partial \Omega_j}h(\zeta)d\sigma_j(\zeta)
 $$
 for any $h \in L^1(\partial\Omega_j)$.
    \item [$3.$] The unit tangent vector $T_j$ for $\partial \Omega_j$ and the unit tangent vector $T$ for $\partial \Omega$ satisfy that $T_j(\Lambda_j(\cdot)) \to T(\cdot)$ almost everywhere on $\partial \Omega$.  
\end{enumerate}
\end{theorem}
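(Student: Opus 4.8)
The plan is to reduce the statement to a one-variable smoothing problem and then apply the elementary properties of mollification of Lipschitz functions. Since $\Omega\subset\mathbb C$ is bounded with Lipschitz boundary, $\partial\Omega$ is a finite disjoint union of closed Lipschitz Jordan curves, each of which is covered by finitely many rectangles $R_1,\dots,R_N$ in which, after a rotation, $\partial\Omega\cap R_k$ is the graph $\{x+i\,g_k(x)\}$ of a Lipschitz function $g_k$ with $\Omega$ lying locally on one fixed side; write $e_k$ for the corresponding inward ``vertical'' direction. Fix a smooth partition of unity $\{\chi_k\}$ subordinate to $\{R_k\}$ near $\partial\Omega$. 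The only analytic input needed is that, with $\rho_j$ a standard mollifier supported in $\{|t|<1/j\}$ and $g_k^{(j)}:=g_k*\rho_j$, one has $g_k^{(j)}\in C^\infty$, $\mathrm{Lip}(g_k^{(j)})\le\mathrm{Lip}(g_k)$, $\|g_k^{(j)}-g_k\|_\infty\to0$, and $(g_k^{(j)})'\to g_k'$ both in every $L^p_{\mathrm{loc}}$ and pointwise almost everywhere---the last because $g_k'\in L^\infty$ makes almost every point a Lebesgue point of $g_k'$, at which mollifications converge.

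Next I would construct $\Omega_j$ by a single inward perturbation of $\partial\Omega$ of magnitude $\varepsilon_j\downarrow0$, chosen with $\varepsilon_j\gg\max_k\|g_k^{(j)}-g_k\|_\infty$: in each chart replace the graph of $g_k$ by the smooth graph obtained from that of $g_k^{(j)}$ after shifting by $\varepsilon_j$ into $\Omega$, and reconcile the overlaps using $\{\chi_k\}$. The condition on $\varepsilon_j$ guarantees the resulting $C^\infty$ Jordan curve lies inside $\Omega$, so for large $j$ it bounds a smoothly bounded subdomain $\Omega_j\Subset\Omega$. Let $\Lambda_j\colon\partial\Omega\to\partial\Omega_j$ be the induced boundary homeomorphism (in each chart, the point on the graph of $g_k$ over a given base point is sent to the point of the perturbed graph over the same base point, i.e.\ displaced roughly by $\varepsilon_j e_k$). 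Since the segment joining a boundary point $P$ to $\Lambda_j(P)$ has direction in the fixed cone spanned by the $e_k$ and length $\asymp\varepsilon_j$, it lies in the non-tangential approach region $\Gamma(P)$ of Definition~\ref{Approach}, and $\Lambda_j(P)\to P$ as $\varepsilon_j\to0$: this is item~1. Taking $\omega_j$ to be the positive weight on $\partial\Omega$ for which $\Lambda_j$ pushes $\omega_j\,d\sigma$ forward to $d\sigma_j$---in chart $R_k$ it is, up to the partition-of-unity correction, the ratio $\sqrt{1+((g_k^{(j)})')^2}\big/\sqrt{1+(g_k')^2}$ evaluated over the base line---the ordinary change-of-variables formula for line integrals gives the identity in item~2, the uniform bound $\mathrm{Lip}(g_k^{(j)})\le\mathrm{Lip}(g_k)$ keeps $\omega_j$ bounded away from $0$ and $\infty$, and $(g_k^{(j)})'\to g_k'$ a.e.\ forces $\omega_j\to1$ a.e. The same a.e.\ convergence of the mollified derivatives gives $T_j(\Lambda_j(\cdot))\to T(\cdot)$ a.e., which is item~3.

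The main obstacle is the gluing bookkeeping hidden in ``reconcile the overlaps'': one must check that where two rectangles meet, the two graph descriptions of $\partial\Omega$ and their mollifications are compatible enough that the perturbed curve is an embedded $C^\infty$ Jordan curve, that the several local inward displacements cannot cancel (so $\Lambda_j$ stays injective and the non-tangency persists), and that the local Jacobian formulas patch to one global $\omega_j$ without spoiling the a.e.\ convergence. The standard device is to package all the local data into a single globally defined scalar ``normal displacement'' built from the $\chi_k\,(g_k^{(j)}-g_k)$ and to differentiate that one expression; once this is arranged the remaining estimates are routine. Alternatively, as the statement is exactly \cite[Theorem~2.3]{L99}, one may simply invoke it.
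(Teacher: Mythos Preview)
The paper does not supply its own proof of this theorem: it is quoted verbatim as \cite[Theorem~2.3]{L99} and used as a black box in the proof of Proposition~\ref{Our Szego Lemma}. Your closing sentence already identifies this, so there is nothing to compare against.

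That said, your sketch is a faithful outline of the standard construction (Ne\v{c}as-type approximation via mollification of the local graph functions and an inward shift), and you correctly flag the one genuine technical issue, namely the gluing across overlapping charts so that the perturbed curve is globally embedded and $C^\infty$, the map $\Lambda_j$ is a global homeomorphism, and the local Jacobian ratios assemble into a single $\omega_j$. For the purposes of this paper, invoking \cite{L99} is exactly what is expected; if you want a self-contained account, the usual route is to parametrize each boundary component by arclength and mollify that single periodic Lipschitz map (rather than the local graph functions), which sidesteps the patching problem entirely while still yielding the uniform Lipschitz control and a.e.\ derivative convergence you need for items~2 and~3.
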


If $\Omega$ has  $C^{\infty}$ boundary with a defining function $\rho$ and Szeg\"o kernel $S$, define the sublevel sets of the defining function for small $\epsilon>0$ 
$$
\Omega_{\epsilon} = \{z \in \Omega: \rho(z) < -\epsilon\},
$$
and consider the Szeg\"o kernels $S_{\epsilon}(\cdot, \cdot)$ of these respective domains.  In the proof of Theorem 2.1 of \cite{Bo87}, Boas showed that for $a, z \in  \Omega_{\epsilon}$,
\begin{equation*}
\lim_{\epsilon \to 0^{+}} S_{\epsilon}(z, a) = S(z, a).
\end{equation*}

The purpose of  this section is to extend this stability result of the Szeg\"o kernel to Lipschitz domains in $\mathbb{C}$ with respect to the Ne\v{c}as approximation. We begin by giving  notations along the lines of \cite{L99}.

\begin{definition}\label{Approach}
 \begin{enumerate}
 
\item [$1.$] For $\lambda > 0$, $P \in \partial \Omega$, the non-tangential approach region to P is
$$
\Gamma(P) = \{\zeta : |\zeta - P| \leq (1 + \lambda)dist(\zeta, \partial\Omega)\}
$$

\item [$2.$] For a function $f$ defined on a Lipschitz domain, the non-tangential limit $f^{+}$ and non-tangential maximal function $f^{*}$, if they exist, are defined respectively as
$$
f^{*}(P) = \sup_{w \in \Gamma(P)} |f(w)|, \quad f^{+}(P) = \lim_{\substack{w \to P \\ w \in \Gamma(P)}} f(w), \quad P \in \partial \Omega.
$$

\item [$3.$]

The Hardy space of a bounded domain $\Omega$ with Lipschitz boundary is
$$
H^2(\partial \Omega) = \{ f^{+} : f \in \mathcal{O}(\Omega), f^{*} \in L^2(\partial \Omega)\}.
$$
 \end{enumerate}
\end{definition}

A useful characterization of the Szeg\"o kernel for our purposes is
\begin{equation}\label{Characterization of Szego}
S(z, a) = {f(z) \over \|f^{+}\|^2_{\partial \Omega}},
\end{equation}
where $f$ is the unique function with minimal $L^2(\partial \Omega)$-norm among all functions in $H^2(\partial\Omega)$ with $f(a) = 1$.   Using Theorem \ref{approximation in the sense of Necas} we can adapt the proof of Boas  \cite[Theorem 2.1]{Bo87}.

\begin{proposition}\label{Our Szego Lemma}
Let $\Omega$ be a bounded domain with Lipschitz boundary and $S(\cdot, \cdot)$ be its Szeg\"o kernel.  Let $\{\Omega_j\}_{j=1}^{\infty}$ be a sequence of subdomains as in Theorem \ref{approximation in the sense of Necas}.  Denote by $\{S_j(\cdot, \cdot)\}$ the corresponding Szeg\"o kernels for these domains.  Then for each $a, z \in \Omega$,
$$
\lim_{j \to \infty} S_j(z, a) \to S(z, a).
$$
\end{proposition}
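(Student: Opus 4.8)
The plan is to mimic Boas' argument from \cite[Theorem 2.1]{Bo87}, replacing the sublevel-set exhaustion by the Ne\v{c}as approximation supplied by Theorem \ref{approximation in the sense of Necas}, and controlling the boundary integrals via the transfer maps $\Lambda_j$ and weights $\omega_j$. Fix $a, z \in \Omega$; since $\Omega_j \nearrow \Omega$ we may assume $a, z \in \Omega_j$ for all large $j$. Using the extremal characterization \eqref{Characterization of Szego}, let $f$ be the minimal-norm element of $H^2(\partial\Omega)$ with $f(a) = 1$, and let $f_j$ be the corresponding minimizer in $H^2(\partial\Omega_j)$, so that $S(z,a) = f(z)/\|f^+\|_{\partial\Omega}^2$ and $S_j(z,a) = f_j(z)/\|f_j^+\|_{\partial\Omega_j}^2$. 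The whole argument reduces to showing (i) $\limsup_j \|f_j^+\|_{\partial\Omega_j}^2 \le \|f^+\|_{\partial\Omega}^2$, (ii) $\liminf_j \|f_j^+\|_{\partial\Omega_j}^2 \ge \|f^+\|_{\partial\Omega}^2$, and (iii) $f_j(z) \to f(z)$; combining these yields $S_j(z,a) \to S(z,a)$.

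For (i), restrict $f$ (or a suitable $H^2(\partial\Omega)$-approximant that is holomorphic in a neighborhood of $\overline{\Omega}$, obtained by a dilation/mollification argument as in \cite{Bo87}) to $\Omega_j$; this is a competitor for the extremal problem on $\Omega_j$ with value $1$ at $a$, so $\|f_j^+\|_{\partial\Omega_j}^2 \le \|f|_{\partial\Omega_j}\|_{\partial\Omega_j}^2$. By part 2 of Theorem \ref{approximation in the sense of Necas}, $\|f|_{\partial\Omega_j}\|_{\partial\Omega_j}^2 = \int_{\partial\Omega} |f(\Lambda_j(w))|^2 \omega_j(w)\,d\sigma(w)$, and since $\Lambda_j(P) \to P$ non-tangentially with $\omega_j \to 1$ a.e. and $\omega_j$ uniformly bounded, dominated convergence (using $f^* \in L^2(\partial\Omega)$ as the dominating envelope, as $|f(\Lambda_j(w))| \le f^*(w)$) gives the limit $\|f^+\|_{\partial\Omega}^2$. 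For (iii), the uniform bound on $\|f_j^+\|_{\partial\Omega_j}$ from (i) plus a normal-families/mean-value argument (the values of $f_j$ on a compact subset of $\Omega$ are controlled by the boundary $L^2$-norm via the Cauchy/Poisson kernel) shows $\{f_j\}$ is locally uniformly bounded; extract a subsequence $f_j \to g$ locally uniformly with $g(a) = 1$, and a lower-semicontinuity argument on the boundary norms shows $g$ is a competitor for the $\Omega$-problem, hence $\|g^+\|_{\partial\Omega} \le \liminf_j \|f_j^+\|_{\partial\Omega_j} \le \|f^+\|_{\partial\Omega}$ by (i); by uniqueness of the minimizer $g = f$, which also delivers (ii) and forces the full sequence to converge.

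The main obstacle is part (ii), i.e. the lower-semicontinuity of the boundary $L^2$-norms along the Ne\v{c}as approximation: one must pass from the interior convergence $f_j \to f$ to control of $\liminf_j \int_{\partial\Omega_j}|f_j|^2 d\sigma_j$ from below by $\int_{\partial\Omega}|f^+|^2 d\sigma$. This is where the full strength of Theorem \ref{approximation in the sense of Necas} is needed — in particular the fact that $\Lambda_j$ maps non-tangential approach regions to non-tangential approach regions (part 1) and that tangent vectors converge a.e. (part 3), which is what lets one identify non-tangential boundary limits of $f_j$ through $\Lambda_j$ with those of $f$. Concretely, one writes $f_j = S_j(\cdot, a)/S_j(a,a)$ and uses a uniform (in $j$) non-tangential maximal function estimate $\|(f_j)^*\|_{\partial\Omega_j} \lesssim \|f_j^+\|_{\partial\Omega_j}$ — valid on Lipschitz domains with constants depending only on the Lipschitz character, which is uniformly controlled along the Ne\v{c}as family — to invoke Fatou's lemma after transplanting everything to $\partial\Omega$ via $\Lambda_j$ and $\omega_j$. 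Once (i)–(iii) are in place, the conclusion $\lim_{j\to\infty} S_j(z,a) = S(z,a)$ is immediate, and the diagonal case $z = a$ (used in Proposition \ref{Relation with analytic capacity and szego kernel}) follows as a special case.
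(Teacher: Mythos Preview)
Your overall strategy---the extremal characterization, the upper bound (i) via restriction and dominated convergence, and the normal-families extraction---matches the paper's adaptation of Boas. The gap is in your handling of (ii). To apply Fatou after transplanting to $\partial\Omega$ you would need $f_j^+(\Lambda_j(w)) \to g^+(w)$ almost everywhere, but interior convergence $f_j \to g$ does not force pointwise convergence of boundary values, and at this stage you do not even know $g \in H^2(\partial\Omega)$, i.e.\ that $g^+$ exists. The uniform maximal estimate $\|(f_j)^*\|_{\partial\Omega_j} \lesssim \|f_j^+\|_{\partial\Omega_j}$ controls each $f_j$ on its own domain, but does not directly yield a bound on $g^*$ for the $\Omega$-cones, since the approach regions and base points vary with $j$. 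Moreover, the tangent-vector convergence (part 3 of Theorem~\ref{approximation in the sense of Necas}) plays no role in a Fatou argument; it is needed elsewhere.

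The paper fills this gap differently. From the uniform bound one extracts a \emph{weak} $L^2(\partial\Omega)$ limit $f_\infty$ of $(f_j^+\circ\Lambda_j)\,\omega_j^{1/2}$ and shows two things: first, by writing each $f_j(z)$ as a Cauchy integral over $\partial\Omega_j$ and transplanting via $\Lambda_j$, that the locally-uniform limit $F$ satisfies $F = \mathcal{C}(f_\infty)$; second, using the characterization $H^2(\partial\Omega)^\perp = \{\overline{TG}: G\in H^2(\partial\Omega)\}$ together with $\langle f_j^+, \overline{T_jG}\rangle_{\partial\Omega_j}=0$ and the a.e.\ convergence $T_j\circ\Lambda_j \to T$ (this is where part 3 enters), that $f_\infty \in H^2(\partial\Omega)$. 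Since the Cauchy transform projects onto $H^2$, $f_\infty = F^+$, so $F$ is a legitimate competitor; weak lower semicontinuity then gives $\|F^+\|_{\partial\Omega} \le \liminf_j \|f_j^+\|_{\partial\Omega_j}$, and together with (i) and uniqueness of the minimizer this forces $F=f$ and convergence of the norms, completing the proof.
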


\begin{proof}
Let $\{f_j\}$ and $f$ be the extremal functions for the domains $\{\Omega_j\}_{j=1}^{\infty}$ and $\Omega$ as in \eqref{Characterization of Szego}. Since 
$
|f \circ \Lambda_j|^2 \omega_j \leq C|f^*|^2 \in L^1(\partial \Omega),
$
\begin{equation}\label{Lipschitz one}
\|f^{+}_j\|^2_{\partial \Omega_j} \leq \|f\|^2_{\partial \Omega_j} = \|(f \circ \Lambda_j)\omega_j^{{1 \over 2}}\|^2_{\partial \Omega} \leq M < \infty
\end{equation}
for some constant $M$, and by the dominated convergence theorem
\begin{equation}\label{follows by DCT}
\lim_{j \to \infty} \|(f \circ \Lambda_j)\omega_j^{{1 \over 2}}\|^2_{\partial \Omega} = \|f^+\|^2_{\partial \Omega}.
\end{equation}
By \eqref{Lipschitz one}, on each $\Omega_k$, $\{f_j\}_{j \geq k}$ is a normal family. After passing to a subsequence, there exists an $F \in \mathcal{O}(\Omega)$ such that $f_j \to F$ uniformly on compact subsets. Necessarily this implies $F(a) = 1$. 

By \eqref{Lipschitz one}, after passing to a subsequence $(f_j^{+}\circ \Lambda_j)\omega_j^{{1/2}}$ converges weakly to $f_{\infty}$ in $L^2(\partial\Omega)$.  Let $\mathcal{C}$ denote the Cauchy transform. 
We claim that 
\begin{equation*}
    F(z) = \mathcal C(f_\infty)(z) \left(:= {1 \over 2\pi i} \int_{\partial \Omega} {f_{\infty}(w) \over w - z} dw \in  H^2(\partial\Omega)\right).
\end{equation*}
In fact for any fixed $z\in \Omega_j$, 
\begin{equation*}
    \begin{split}
        |f_j(z) - \mathcal C(f_\infty)(z)|= &\left| \int_{\partial\Omega}{f_j^{+} ( \Lambda_j(w)) T_j(\Lambda_j(w))\omega_j(w) \over \Lambda_j(w) - z}d\sigma -\int_{\partial\Omega}\frac{f_\infty(w) T(w)}{w-z}d\sigma\right|\\
       \le & \int_{\partial\Omega}|f_j^{+} ( \Lambda_j(w))\omega_j^\frac{1}{2}(w)|\left| {T_j(\Lambda_j(w))\omega_j^\frac{1}{2}(w) \over \Lambda_j(w) - z} - \frac{T(w)}{w-z}\right|d\sigma\\
       & \quad + \left|\int_{\partial\Omega} \left(f_j^{+} ( \Lambda_j(w))\omega_j^\frac{1}{2}(w) - f_\infty(w)\right)\frac{T(w)}{w-z}d\sigma\right|: =A+B.
    \end{split}
\end{equation*}
Since $\frac{T(w)}{w-z}\in L^\infty(\partial \Omega)$ and $(f_j^{+} \circ \Lambda_j)\omega_j^\frac{1}{2}$ converges weakly to $f_\infty$ in $L^2(\partial\Omega)$, we have $B\rightarrow 0$. For $A$, notice that by Theorem \ref{approximation in the sense of Necas} and the dominated convergence theorem, $ {T_j(\Lambda_j(w))\omega_j^\frac{1}{2}(w) \over \Lambda_j(w) - z}  $ converges to $ \frac{T(w)}{w-z} $ in $L^2(\partial\Omega)$ norm. Making use of H\"older inequality and  the uniform boundedness of $\|(f_j^{+}\circ \Lambda_j)\omega_j^{1/2}\|_{\partial \Omega}$,  we get
$$
A\le \|(f_j^{+}\circ \Lambda_j)\omega_j^{\frac12}\|_{\partial \Omega}\left\| {T_j(\Lambda_j(w))\omega_j^\frac{1}{2}(w) \over \Lambda_j(w) - z} - \frac{T(w)}{w-z}\right\|_{\partial \Omega}\rightarrow 0.
$$
Hence $f_j \to \mathcal{C}f_{\infty}$ pointwisely, and so $F = \mathcal{C}(f_{\infty})$.  

\medskip

Using the fact that $\{\overline{TG}: G \in H^2(\partial\Omega)\} = H^2(\partial\Omega)^{\perp}$, we will show that $f_{\infty} \in H^2(\partial\Omega)$, cf. \cite[Theorem 5.1]{L99}, \cite[Theorem 4.3]{Be16}. Since
$\langle f_j^{+}, \overline{T_j G} \rangle_{\partial \Omega_j} = 0$,
\begin{eqnarray*}
\lefteqn{|\langle f_{\infty}, \overline{TG} \rangle|} 
\\
&=& |\langle f_{\infty} - (f_j^{+} \circ \Lambda_j) \omega_j^{1 \over 2}, \overline{TG} \rangle_{\partial\Omega} 
\\
&& \quad + \int_{\partial\Omega} f_j^{+}(\Lambda_j)(w))\omega_j^{1 \over 2}(w)T(w)G(w) - (f_j^{+} (\Lambda_j)(w))T_j(\Lambda_j(w))G(\Lambda_j(w))\omega_j(w) d\sigma(w)|
\\
&\leq & |\langle f_{\infty} - (f_j^{+} \circ \Lambda_j)\omega_j^{1 \over 2}, \overline{TG} \rangle_{\partial\Omega}| + M^{1 \over 2}\Big{(}\int_{\partial\Omega} |T(w)G(w) - T_j(\Lambda_j(w))G(\Lambda_j(w))\omega_j^{1 \over 2}(w)|^2 d\sigma(w)\Big{)}^{1 \over 2}.
\end{eqnarray*}

The first term above approaches 0 as $j$ approaches $\infty$ by weak-convergence.  The integrand in the second term is dominated by $C|G^{*}|^2$ for some constant $C$.  By the dominated convergence theorem, the integral approaches 0 as well. Thus,
$
f_{\infty} \in  H^2(\partial\Omega).
$
Since the Cauchy transform is a projection onto $H^2(\partial\Omega)$, $f_{\infty} = F^{+}$ on $\partial\Omega.$ In particular $(f_j^+ \circ \Lambda_j) {\omega_j }^\frac{1}{2}  $  converges weakly to $F^+$ in $L^2(\partial\Omega)$ as well.
Hence

\begin{eqnarray}
        \|F^+\|^2_{\partial \Omega}&\le& \liminf_{j\rightarrow \infty}  \|(f_j \circ \Lambda_j)\omega_j^{{1 \over 2}}\|^2_{\partial \Omega} = \liminf_{j\rightarrow \infty} \|f_j^+\|^2_{\partial \Omega_j} \nonumber \\
        &\le& \limsup_{j\rightarrow \infty} \|f_j^+\|^2_{\partial \Omega_j} \le \limsup_{j\rightarrow \infty}\|f\|^2_{\partial \Omega_j} = \|f^+\|^2_{\partial \Omega}. \label{in}
\end{eqnarray}
Here the first inequality uses the weakly sequentially lower semicontinuity for the $L^2(\partial\Omega)$ norm, the third  inequality applies  (\ref{Lipschitz one}) and the last equality uses \eqref{follows by DCT}.  Since $f$ is unique, $F = f$ on $\Omega$, and all inequalities in (\ref{in}) become equalities. In particular, $f_j$ converges pointwisely to $f$ on $\Omega$ and  $\|f_j^+\|_{\partial \Omega_j}\rightarrow \|f^+\|_{\partial \Omega}$ by \eqref{in}.
By  \eqref{Characterization of Szego},
$
S_j(z, a) \to S(z, a). 
$

\end{proof}

 \end{appendices}

 \subsection*{Funding}

{\fontsize{11.5}{10}\selectfont

The research of the first and second author is supported by an AMS-Simons travel grant. The research of the third author is partially supported by NSF grant DMS-1501024.

}

  \subsection*{Acknowledgements}
{\fontsize{11.5}{10}\selectfont Some of the results in this paper are a part of the second-named author's Ph.D. dissertation \cite{Tr} at the University of California, Irvine.  He would like to thank his thesis advisor Song-Ying Li for the support throughout the years, and Harold Boas and Emil Straube for useful conversations and encouragement during the writing process.   The authors thank Boas \cite{Bo22} for explaining the relevance of 
 the shears   to our 
 paper.  
 Finally, we thank the anonymous referee for valuable comments, and Marc Benoit for spotting a few typos in an earlier draft.
 
 }

\bibliographystyle{alphaspecial}

\fontsize{11}{9}\selectfont

\vspace{0.5cm}

\noindent dong@uconn.edu,

\vspace{0.2 cm}

\noindent Department of Mathematics, University of Connecticut, Storrs, CT 06269-1009, USA

\vspace{0.4cm}

\noindent (formerly) jtreuer@uci.edu, (now) jtreuer@tamu.edu;

 \vspace{0.2 cm}

\noindent Department of Mathematics, Texas A\&M University, College Station, TX 77843-3368, USA

\vspace{0.4cm}

\noindent zhangyu@pfw.edu,

\vspace{0.2 cm}

\noindent Department of Mathematical Sciences, Purdue University Fort Wayne, Fort Wayne, IN 46805-1499, USA

\end{document}